\newtheorem{Theorem}{Theorem}[section]
\newtheorem{Definition}[Theorem]{Definition}
\newtheorem{Proposition}[Theorem]{Proposition}
\newtheorem{Lemma}[Theorem]{Lemma}
\newtheorem{Remark}[Theorem]{Remark}
\newtheorem{Assumption}{Assumption}
\numberwithin{equation}{section}
\begin{document}
\renewcommand{\figurename}{Fig.1}

\def\r2{\mathbb{R}^2}
\def\le{\left}
\def\r{\right}
\def\cost{\mbox{const}}
\def\a{\alpha}
\def\d{\delta}
\def\ph{\varphi}
\def\e{\epsilon}
\def\la{\lambda}
\def\si{\sigma}
\def\La{\Lambda}
\def\B{{\cal B}}
\def\A{{\mathcal A}}
\def\L{{\mathcal L}}
\def\O{{\mathcal O}}
\def\bO{\overline{{\mathcal O}}}
\def\F{{\mathcal F}}
\def\K{{\mathcal K}}
\def\H{{\mathcal H}}
\def\D{{\mathcal D}}
\def\C{{\mathcal C}}
\def\M{{\mathcal M}}
\def\N{{\mathcal N}}
\def\G{{\mathcal G}}
\def\T{{\mathcal T}}
\def\R{{\mathbb R}}
\def\I{{\mathcal I}}

\def\bw{\overline{W}}
\def\phin{\|\varphi\|_{0}}
\def\s0t{\sup_{t \in [0,T]}}
\def\lt{\lim_{t\rightarrow 0}}
\def\iot{\int_{0}^{t}}
\def\ioi{\int_0^{+\infty}}
\def\ds{\displaystyle}
\def\pag{\vfill\eject}
\def\fine{\par\vfill\supereject\end}
\def\acapo{\hfill\break}

\def\beq{\begin{equation}}
\def\eeq{\end{equation}}
\def\barr{\begin{array}}
\def\earr{\end{array}}
\def\vs{\vspace{.01mm}   \\}
\def\rd{\reals\,^{d}}
\def\rn{\reals\,^{n}}
\def\rr{\reals\,^{r}}
\def\bD{\overline{{\mathcal D}}}
\newcommand{\dimo}{\hfill \break {\bf Proof - }}
\newcommand{\nat}{\mathbb N}
\newcommand{\E}{\mathbb E}
\newcommand{\Pro}{\mathbb P}
\newcommand{\com}{{\scriptstyle \circ}}
\newcommand{\reals}{\mathbb R}

\def\Amu{{A_\mu}}
\def\Qmu{{Q_\mu}}
\def\Smu{{S_\mu}}
\def\H{{\mathcal{H}}}
\def\Im{{\textnormal{Im }}}
\def\Tr{{\textnormal{Tr}}}
\def\E{{\mathbb{E}}}
\def\P{{\mathbb{P}}}
\def\spn{{\textnormal{span}}}
\title{A Smoluchowski-Kramers approximation for an infinite dimensional system with state-dependent damping
}
\author{Sandra Cerrai\thanks{Department of Mathematics, University of Maryland, College Park, MD
20742, USA. Emails: cerrai@umd.edu, gxi@umd.edu}\ \,\thanks{Partially supported by NSF grants DMS-1712934 -  {\em Analysis of Stochastic Partial Differential Equations with Multiple Scales}  and  DMS-1954299 - {\em 
 Multiscale Analysis of Infinite-Dimensional Stochastic Systems}. } \ and Guangyu Xi\footnotemark[2]\\
\vspace{.1cm}\\
Department of Mathematics\\
 University of Maryland\\
College Park, Maryland, USA
}

\date{}

\maketitle

\begin{abstract}
We study the validity of a Smoluchowski-Kramers approximation for a  class of  wave equations in a bounded domain of $\mathbb{R}^n$ subject to a state-dependent damping and perturbed by a multiplicative noise. We prove that in the small mass limit the solution converges to the solution of a stochastic quasilinear  parabolic equation where a noise-induced extra drift is created.

\vspace{.3cm}

{\em Key words}: Smoluchowski-Kramers approximation, stochastic damped wave equations, stochastic quasilinear equations, singular perturbation of SPDEs
\end{abstract}

\section{Introduction}
In this article we study the following class of stochastic wave equations with state-dependent damping

\begin{equation}
\label{SPDE1}
\le\{\begin{array}{l}
\ds{\mu\partial_t^2 u_\mu=\Delta u_\mu - \gamma (u_\mu) \partial_t u_\mu + f(u_\mu)+ \si(u_\mu)\partial_t w^Q,\ \  t>0,\ \ \ \ x\in \mathcal{O},}\\[10pt]
\ds{u_\mu(0)=u_0,\ \ \ \ \partial_t u_\mu(0)=v_0,\ \ \ \ \ \ \  u_{\mu_{|_{\partial \mathcal{O}}}}=0,}
\end{array}\r.
\end{equation}
and their small mass limit as $\mu \rightarrow 0$. Here $\mathcal{O}$ is a bounded domain on $\mathbb{R}^n$, and $w^Q(t,x)$ is a cylindrical Wiener process, which is white in time and colored in space. The friction coefficient $\gamma$ is a strictly positive, bounded and continuously differentiable function, and $f$ and $\si$  are Lipschitz continuous functions.

By Newton's second law of motion, the solution $u_\mu(t,x)$ of equation \eqref{SPDE1} can be interpreted as the displacement field of the particles in a continuum body occupying domain $\mathcal{O}$, subject to a random external force field $\sigma(u_\mu)\partial_t w^Q$ and a state-dependent damping force $\gamma (u_\mu) \partial_t u_\mu$, which is proportional to the velocity field. In addition, the particles are subject to the interaction forces between neighboring particles represented by the Laplace operator $\Delta$, and the non-linear reaction represented by $f$. Here  $\mu$ represents the constant density of the particles and we are interested in the regime when $\mu\to 0$, which is the so-called Smoluchowski-Kramers approximation limit (ref. \cite{kra} and \cite{smolu}).
 
A series of papers (ref. \cite{CF1}, \cite{CF2} and \cite{salins}) studied the limiting behavior of $u_\mu$, for a large class of reaction terms $f$, and for both additive  and multiplicative noise. In all those papers the friction coefficient $\gamma$ is assumed to be constant and  a perturbative limit is obtained. Namely, it is  proved that in the small mass limit $u_\mu$ converges to the solution of the following parabolic problem
 \begin{equation}
\label{eq1-par}
\le\{\begin{array}{l}
\ds{\gamma\,\partial_t u=\Delta u+f(u)+\si(u)\partial_tw^Q,}\\
\vs
\ds{u(0)=u_0,\ \ \ \ \ \ u_{|_{\partial \mathcal{O}}}=0.}
\end{array}
\r.
\end{equation}
More precisely, it is  shown that for every $T>0$ and $\eta>0$
\begin{equation}
\label{intro2}
\lim_{\mu\to 0}\mathbb{P}\le(\sup_{t \in\,[0,T]}\Vert u_\mu(t)-u(t)\Vert_{L^2(\mathcal{O})}>\eta\r)=0.
\end{equation}
In fact, in \cite{CS3} it is proved that when $f$ is Lipschitz continuous, the following stronger convergence holds
\begin{equation}
\label{intro1}
\lim_{\mu\to 0}\mathbb{E} \sup_{t \in\,[0,T]}\Vert u_\mu(t)-u(t)\Vert_{L^2(\mathcal{O})}^p=0,
\end{equation}
for every $p\geq 1$.  Note that several problems related to this type of limit have been addressed in a variety of finite and infinite dimensional contexts (see e.g. \cite{f} and \cite{spi}, for the finite dimensional case, and \cite{CF1},  \cite{CF2},  \cite{CFS},  \cite{Lv2}, \cite{Lv3}, \cite{Lv4}, \cite{Nguyen} and \cite{salins}, for the infinite dimensional case).

Once proved the validity of the small mass limit in any fixed time interval, it is important to understand how stable this limit is for long times. To this purpose, \cite{CGH} studies the convergence of the statistically invariant states for a class of semi-linear wave equations with linear damping, i.e. equation \eqref{SPDE1} 
with constant friction coefficient $\gamma$, both with Lipschitz and with polynomial non-linearity $f$. A similar problem is studied in \cite{CF1} when the two systems are of gradient type. In that case the Boltzmann distribution for the solution of the second order equation is explicitly given in terms of a Gibbs measure. It turns out that the first marginal of the Boltzmann distribution does not depend on $\mu$ and coincides with the invariant measure of the limiting first order equation. In the case studied in \cite{CGH}, there is no explicit expression  for the invariant distributions of \eqref{SPDE1}. Nevertheless, it is shown  that the first marginals of any sequence of invariant measures for \eqref{SPDE1} converge in a suitable Wasserstein metric to the unique invariant measure of equation \eqref{eq1-par}.
In the same spirit, \cite{sal} and \cite{sal2} studied the convergence of the quasi-potentials $V_\mu(u,v)$, which describe the asymptotics of the exit times and the large deviation principle for the invariant measures to equation \eqref{SPDE1}. In \cite{sal}, gradient systems are considered, so that $V_\mu$ is explicitly computed and it is shown that $V^\mu(u)$, the infimum of $V_\mu(u,v)$ over all $v \in\,H^{-1}(\mathcal{O})$, coincides  for every $\mu>0$ with $V(u)$, the quasi-potential associated with equation \eqref{eq1-par}. In \cite{sal2}, the non-gradient case is studied and it is shown that $V^\mu(u)$ converges pointwise to $V(u)$, as $\mu$ goes to zero.

In all the aforementioned papers, the case of a constant friction coefficient $\gamma$ is considered and the limiting equation \eqref{eq1-par} is formally obtained  by taking $\mu=0$ in \eqref{SPDE1}. However, there are relevant situations in which this is not true. This happens, for example, in the case when the constant friction is replaced by a magnetic field. As a matter of fact, even in the case of a constant magnetic field and finite dimension,  the small mass limit does not yield the solution of the first order equation (ref. 
\cite{CF3}, \cite{CWZ} and \cite{lee} for the finite dimensional case and \cite{CS3} for the infinite dimensional case).  In this case, a possible strategy consists in regularizing  the problem  by adding a small friction or by smoothing the noise in time and,  in the double limit, it is possible to give a meaning to the  Smoluchowski-Kramers approximation. Notice that in  \cite{CS3} the limiting equation is an SPDE of hyperbolic type.

In the present paper, we are dealing with another situation when  the small mass limit does not give a perturbative result. As mentioned at the beginning of this introduction, we consider a wave equation perturbed by a multiplicative noise,  having a friction term whose intensity is state-dependent. This problem has been extensively studied in finite dimension in a series of papers (see \cite{hmdvw}, and references therein, and also \cite{hhv}). In these papers it is shown how the interplay between the non-constant friction coefficient and the noise creates an additional drift in the limiting first order equation, when the mass $\mu$ goes to zero. More precisely, the following system is studied
\[\le\{ \begin{array}{l}
\ds{d x_\mu(t) = v_\mu(t)\,dt,\ \ \ \ \ x_\mu(0)=x \in\,\mathbb{R}^d,}\\[10pt]
\ds{\mu dv_\mu(t)=\le[b(x_\mu(t))-\gamma(x_\mu(t))v_\mu(t)\r]\,dt+\si(x_\mu(t))\,dW(t),\ \ \ \ \ v_\mu(0)=v \in\,\mathbb{R}^d,}
\end{array}\r.\]
where $\gamma$ is a matrix valued function defined on $\mathbb{R}^d$, such that for some positive $\gamma_0$
\[\inf_{x \in\,\mathcal{O}}\,\xi^T \gamma(x) \xi\geq \gamma_0 \vert\xi\vert^2,\ \ \ \ \xi \in\,\mathbb{R}^d.\]
It is proved that, as $\mu$ goes to zero, $x_\mu$ converges in $L^2$, with  respect to the uniform norm in $C([0,T];\mathbb{R}^d)$, to the solution of the first order equation
\[dx(t)=\le(\frac{b(x(t))}{\gamma(x(t))} +S(x(t))\r) dt+\frac{\si(x(t))}{\gamma(x(t))}\,dW(t),\ \ \ \ \ x(0)=x,\]
where the noise induced drift $S(x)$ is given by
\[S_i(x)=\frac{\partial}{\partial x_l}\le[(\gamma^{-1})_{ij}(x)\r]J_{jl}(x),\]
and the matrix valued function $J$ is the solution of the Lyapunov equation
\[J(x)\gamma^\star(x)+\gamma(x)J(x)=\si(x)\si^\star(x).\]

Our purpose here is to understand if something similar happens also in the case of infinite dimensional systems. In fact, in what follows we will prove that for every initial condition $(u_0, v_0) \in\,H^1(\mathcal{O})\times L^2(\mathcal{O})$ and for every $\d>0$ and $p<\infty,$ 
\begin{equation}
\label{limitefinale}	
\lim_{\mu\to 0} \mathbb{P}\le(\Vert u_\mu-u\Vert_{C([0,T];H^{-\d}(\mathcal{O}))}+ \Vert u_\mu-u\Vert_{L^p(\mathcal{O})}>\eta\r)=0,\ \ \ \ \eta>0, \end{equation}
where $u$ is the unique solution of the quasilinear stochastic parabolic equation 
\begin{equation}
\label{gs40}
\le\{\begin{array}{l}
\ds{\partial_t u= \frac{1}{\gamma(u)}\Delta u +\frac{f(u)}{\gamma(u)} -\frac{\gamma'(u)}{2\gamma^3(u)} \sum_{i=1}^\infty (\si(u)Qe_i)^2 +\frac{\si(u)}{\gamma(u)}\partial_t w^Q,\ \  t>0,\ \ \ \ x\in \mathcal{O},}\\[18pt]
\ds{u(0)=u_0, \ \ \ \ \ \ \  u_{|_{\partial\mathcal{O}}}=0.}
\end{array}\r.
\end{equation}
It is important to notice that if $\si$ is constant, then the noise induced term 
\[H(u):=-\frac{\gamma'(u)}{2\gamma^3(u)} \sum_{i=1}^\infty (\si(u)Qe_i)^2,\]
coincides with the Stratonovich-to-It\^o correction, so that equation \eqref{gs40} can be written as
\[\partial_t u= \frac{1}{\gamma(u)}\Delta u +\frac{f(u)}{\gamma(u)} +\frac{\si}{\gamma(u)}\circ \partial_t w^Q.\]
However, if  $G(u)$ denotes the It\^o-to-Stratonovich correction
\[G(u)=-\frac 12 \sum_{i=1}^\infty \partial_u\left(\frac{\si(u)Qe_i}{\gamma(u)}\right)\left(\frac{\si(u)Qe_i}{\gamma(u)}\right),\]
then
\[H(u)+G(u)=-\frac 1{2\gamma^2(u)}\,\sum_{i=1}^\infty
\left(\si(u)Qe_i\right)\, \partial_u\left(\si(u)Qe_i\right),\]
and this is manifestly non-trivial in general, when $\sigma$ is not constant.
 This means that in the case of an arbitrary state-dependent diffusion coefficient $\sigma$ the small mass limit does not lead to the perturbative parabolic quasilinear equation, obtained by taking $\mu=0$ and replacing the It\^o's  with the   Stratonovich's integral.

We would also like to point out that, unlike in finite dimension, here we are not handling systems of equations. This means in particular that $\gamma$ is  a scalar function and for every function $u:[0,T]\times \mathcal{O}\to\mathbb{R}$ we can write
\begin{equation}
\label{gammag}	
\gamma(u(t,x))\partial_t u(t,x)=\partial_t [g(u(t,x))],\ \ \ \ t \in\,[0,T],\ \ \ x \in\,\mathcal{O},\end{equation}
where $g^\prime=\gamma$. The case of systems and of matrix valued friction coefficients requires a different analysis and will be investigated
in a forthcoming paper.

\medskip

Our first step in the proof of  \eqref{limitefinale} is proving that, for every fixed $\mu>0$, equation \eqref{SPDE1} has a unique adapted solution $(u_\mu,\partial_t u_\mu) \in\,L^2(\Omega;C([0,T];H^1(\mathcal{O})\times L^2(\mathcal{O})))$. Because of the the non-constant friction, it is conveniente to reformulate equation \eqref{SPDE1} in terms of the new variables $(u_\mu,g(u_\mu)/\mu+\partial_t u_\mu)$. However, due to the presence of the non-linear term $g(u)$,  using   the theory of linear semigroups, as done in the previous papers \cite{CF1}, \cite{CF2} and \cite{CS3}, turns out to be the wrong path to follow. Instead, here it is more appropriate to use the theory of monotone non-linear operators (see \cite{Barbu}). 

Once proved the well-posedness of \eqref{SPDE1}, next we prove the uniform bounds of the solutions $(u_\mu,\partial_t u_\mu)$, which are required to obtain tightness.   This is one of the most delicate parts of the paper. Actually, even when using the It\^o formula for a nicely chosen energy functional, the more classical arguments that work in finite dimension fail. Nevertheless, we are able to prove that $u_\mu$ is bounded with respect to $ \mu $ in $ L^2(\Omega;C([0,T];L^2(\mathcal{O}))\cap L^2(0,T;H^1(\mathcal{O}))) $. Even more delicate are the bounds for the velocity $\partial_t u_\mu$. Of course, we know that we cannot   have any uniform bounds with respect to $\mu$. However,  we expected to have 
\[\sup_{\mu \in\,(0,1)}\mu^\a\,\mathbb{E}\sup_{t \in\,[0,T]}\,\Vert \partial_t u_\mu(t)\Vert^2_{L^2(\mathcal{O})}<\infty,\]
for  $\a=1$.
As a matter of fact, by using an argument by contradiction we can prove the bound above only for $\a= 3/2$, but this is enough to obtain the fundamental limit
\begin{equation}
\label{introlim}
\lim_{\mu\to 0}\mu\,\mathbb{E}\sup_{t \in\,[0,T]}\,\Vert \partial_t u_\mu(t)\Vert_{L^2(\mathcal{O})}=0.\end{equation}

After defining $\rho_\mu=g(u_\mu)$, these uniform bounds are fundamental to prove the tightness of  the family $(\rho_\mu)_{\mu>0}$ in $L^p(0,T;L^2(\mathcal{O}))\cap C([0,T];H^{-\d}(\mathcal{O}))$, for every $p<\infty$ and  $\d>0$. We show that for every $\mu>0$ the function $\rho_\mu$ solves the equation
\[ \begin{aligned}
 	\rho_\mu(t)+\mu \partial_t u_\mu(t) & =g(u_0)+\mu v_0 +\int_0^t \text{div}[b(\rho_\mu(s))\nabla \rho_\mu(s)] ds\\[10pt]
 	&\quad + \int_0^t F(\rho_\mu(s)) ds+ \int_0^t \si_g(\rho_\mu(s))dw^Q(s),
\end{aligned}
\]
where $b=1/\gamma \circ g^{-1}$, $F=f\circ g^{-1}$, and $\si_g(h)=\si(g^{-1}\circ h)$.  Working with this equation, instead of \eqref{SPDE1}, makes the use of the a-priori bounds and of limit \eqref{introlim} more direct. 

Once tightness is proved, we have the weak convergence of  the sequence $(\rho_\mu)_{\mu>0}$ to some $\rho$ that solves the quasilinear parabolic SPDE
\begin{equation}
\label{SPDERho-intro}
\le\{\begin{array}{l}
\ds{\partial_t \rho=\text{div}[b(\rho) \nabla \rho]+F(\rho)+\si_g(\rho)dw^Q(t), \ \ \ \  t>0, \ \ \ \ x\in \mathcal{O},}\\[10pt]
\ds{\rho(0,x)=g(u_0),\ \ \ \ \ \ \ \ \rho(t,x)=0,\ \ \ x\in \partial \mathcal{O}.}
\end{array}\r.
\end{equation}
Then, since we can prove  pathwise uniqueness for equation \eqref{SPDERho-intro}, from weak convergence we  get the convergence in probability. Finally, a generalized It\^o formula stated in the appendix allows us to get the convergence of $u_\mu$ to the solution of equation \eqref{gs40}.

We would like to remind that equations like \eqref{SPDERho-intro} have attracted a lot of attention in recent years, and several papers have studied their well-posedness in $C([0,T];L^2(\mathcal{O}))\cap L^2(0,T;H^1(\mathcal{O}))$, in case of periodic boundary conditions, under considerably more general assumptions on the coefficients $b$, that can be matrix valued and even degenerate (see \cite{DHV2016} and \cite{HZ2017}). Our $b$ here is scalar valued and non-degenerate, but this allows us to have, at least in the additive case, weaker  assumptions  on the regularity of the noise  than in \cite{DHV2016} and \cite{HZ2017}. In particular, as a byproduct of our small mass limit, we get the well-posedness of equation \eqref{SPDERho-intro} for a noise  that, for example in the case of constant $\si$ is only assumed to live in $L^2(\mathcal{O})$, which seems to be a new result.
\medskip

{\em Organization of the paper:} In Section \ref{sec2}, we introduce the notations and we describe the assumptions we make on the coefficients and on the noise in equation \eqref{SPDE1}. In section \ref{sec3}, we study the well-posedness of equation \eqref{SPDE1} in space $L^2(\Omega;C([0,T];H^1(\mathcal{O})\times L^2(\mathcal{O})))$, for every $T>0$ and every fixed $\mu>0$. In Section \ref{sec5}, we prove some uniform bounds with respect to $\mu$
for the solutions $((u_\mu,\partial_t u_\mu))_{\mu>0}$ in 	suitable functional spaces. In Section \ref{sec6}, these bounds allow us to prove the tightness of the distributions of $g(u_\mu)$, for $\mu>0$ sufficiently small. In Section \ref{sec4}, we prove the validity of pathwise uniqueness for equation \eqref{SPDERho-intro}. In Section \ref{sec7}, we give the proof of the convergence in probability of $u_\mu$, as $\mu$ goes to zero and we identify the limit $u$ as the solution of the first order equation \eqref{gs40}.

\section{Preliminaries}
\label{sec2}

Throughout the present paper $\mathcal{O}$ is a bounded domain in $\mathbb{R}^n$, with $n\geq 1$, and it has a boundary of class $C^3$. We denote by $H$ the Hilbert space $L^2(\mathcal{O})$ and by $\langle \cdot , \cdot \rangle_H$ the corresponding  inner product.  $H^1$ is the completion of $C_0^\infty(\mathcal{O})$ with respect to norm 
\[
\Vert u \Vert_{H^1}^2:=\Vert \nabla u \Vert_H^2=\int_\mathcal{O} \vert \nabla u(x)\vert^2 dx,
\]
and $H^{-1}$ is the dual space to $H^1$. Then $H^1$, $H$ and $H^{-1}$ are all complete separable metric spaces, and  the following relation between them holds
\begin{equation}
    H^1\subset H\subset H^{-1},
\end{equation}
where both inclusions  are compact embeddings. In what follows,  we shall denote  
\begin{equation*}
    \mathcal{H}=H\times H^{-1},\ \ \ \ \  \mathcal{H}_1=H^1 \times H. 
\end{equation*}

Given the domain $\mathcal{O}$, we denote by $(e_i)_{i\in\mathbb{N}}\subset H^1$ the complete orthonormal basis of $H$ which diagonalizes the Laplacian $\Delta$, endowed with Dirichlet boundary conditions on $\partial\mathcal{O}$. Moreover, we denote by $(-\alpha_i)_{i\in \mathbb{N}}$ the corresponding sequence of eigenvalues, i.e.
 \[\Delta e_i=-\alpha_i e_i,\ \ \ \  i\in \mathbb{N}.\]
  Given 
  \[u=\sum_{i=1}^\infty b_i e_i,\ \ \ \  v=\sum_{i=1}^\infty c_i e_i,\]
  for some sequences of real numbers $(b_i)_{i\in\,\mathbb{N}}$ and $(c_i)_{i\in\,\mathbb{N}}$,
we have
\begin{equation}\label{InnerProducts}
    \langle u,v \rangle_{H^1}=\sum_{i=1}^\infty \alpha_i b_i c_i,\quad \langle u,v \rangle_{H}=\sum_{i=1}^\infty b_i c_i,\quad \langle u,v\rangle_{H^{-1}}=\sum_{i=1}^\infty \frac{1}{\alpha_i} b_i c_i.
\end{equation}
From \eqref{InnerProducts} we can derive the Poincar\'e inequality
\begin{equation}\label{Poincare}
    \Vert u\Vert _{H}\leq \frac{1}{\sqrt{\alpha_1}}\Vert u\Vert _{H^1},\ \ \ \ u\in H^1,\ \ \ \ \  \Vert u\Vert _{H^{-1}}\leq \frac{1}{\sqrt{\alpha_1}} \Vert u\Vert _H,\ \ \ \ u\in H.
\end{equation}

As for the stochastic perturbation, we assume that $w^Q(t)$ is a cylindrical $Q$-Wiener process, defined on a complete stochastic basis  $(\Omega,\mathcal{F},(\mathcal{F}_t)_{t\geq 0},\mathbb{P})$. This means that $w^Q(t)$ can be formally written as 
\[
w^Q(t)=\sum_{i=1}^\infty Q e_i \beta_i(t),
\]
where $(\beta_i)_{i\in \mathbb{N}}$ is a sequence of independent standard Brownian motions on $(\Omega,\mathcal{F},(\mathcal{F}_t)_{t\geq 0},\mathbb{P})$, $Q:H\rightarrow H$ is a bounded  linear operator, and $(e_i)_{i\in\,\mathbb{N}}$ is the complete orthonormal system introduced above that diagonalizes the Laplace operator, endowed with Dirichlet boundary conditions. 

In what follows we shall denote by $H_Q$ the set $Q(H)$. $H_Q$ is the reproducing kernel of the noise $w^Q$ and  is a Hilbert space, endowed with the inner product
\[\langle Qh, Qk\rangle_{H_Q}=\langle h,k\rangle_H,\ \ \ \ h, k \in\,H.\] Notice that the sequence $(Q e_i)_{i \in\,\mathbb{N}}$ is a complete orthonormal system in $H_Q$. Moreover, if $U$ is any Hilbert space containing $H_Q$ such that the embedding  of $H_Q$ into $U$ is Hilbert-Schmidt, we have that 
\begin{equation}
  \label{contb}w^Q \in\,C([0,T];U).
\end{equation}

Next, we recall that for every two separable Hilbert spaces $E$ and $F$, $\mathcal{L}_2(E,F)$ denotes the space of Hilbert-Schmidt operators from $E$ into $F$. $\mathcal{L}_2(E,F)$ is a Hilbert space, endowed with the inner product
\[\langle A,B\rangle_{\mathcal{L}_2(E,F)}=\mbox{Tr}_E\,[A^\star B]=\mbox{Tr}_F[B A^\star].\]

Throughout this article, we will always assume that the three hypotheses below are true.
\begin{Assumption}
\label{Assumption1}
The mapping $\si:H\to \mathcal{L}_2(H_Q,H)$ is defined by \[[\sigma(h)Qe_i](x) = \sigma_i(x,h(x)), \ \ \ \ x \in\,\mathcal{O},\] for every $h\in H$ and $i\in\,\mathbb{N}$, for some mapping $\sigma_i:\mathcal{O}\times \mathbb{R}\rightarrow \mathbb{R}$. We assume $\sigma$ is bounded, that is
\begin{equation*}
    \si_\infty:=\sup_{h\in H} \Vert\si(h)\Vert_{\mathcal{L}_2(H_Q,H)}<\infty,
\end{equation*}
and 
\begin{equation}
\label{sgfine1}	
\sup_{x \in\,\mathcal{O}}\, \sum_{i=1}^\infty \vert \sigma_i(x,y_1) - \sigma_i(x,y_2)\vert^2 \leq L\,\vert y_1-y_2\vert^2,\ \ \ \ \ y_1, y_2 \in\,\mathbb{R},
\end{equation}
\end{Assumption}

Notice that \eqref{sgfine1} implies $\si$ is Lipschitz continuous in the sense that
 for any $h_1,h_2\in H$
\begin{equation*}
    \Vert\si(h_1)-\si(h_2)\Vert_{\mathcal{L}_2(H_Q,H)}^2 \leq L\, \Vert h_1 -h_2\Vert_H^2.
\end{equation*}

\begin{Remark}
{\em If $\si$ is constant,  then Assumption \ref{Assumption1} means that $\si Q$ is a Hilbert-Schmidt operator in $H$. Equivalently, in case $\si$ is the identity operator, this means that the noise $w^Q$ lives in $H$, so that we can take $U=H$.

If $\si$ is not constant, then Assumption \ref{Assumption1} is satisfied if for example
\[ [\si(h)Qk](x)=\la(h(x))Qk(x),\ \ \ \ x \in\,\mathcal{O}, \ \ \ \ h, k \in\,H,\]
for some $\la:\mathbb{R}\to \mathbb{R}$ bounded and Lipschitz continuous and for some $Q \in\,\mathcal{L}(H)$ such that
\[\sum_{i=1}^\infty \Vert Qe_i\Vert^2_{L^\infty(\mathcal{O})}<\infty.\]
In case $Q$ is diagonalizable with respect the basis $(e_i)_{i \in\,\mathbb{N}}$, with $Q e_i=\la_i e_i$,
the condition above reads
\begin{equation}\label{gx005}
    \sum_{i=1}^\infty \la_i^2\Vert e_i\Vert^2_{L^\infty(\mathcal{O})}<\infty.
\end{equation}
In general (see \cite{greiser}), it holds that 
\[\Vert e_i\Vert_{L^\infty(\mathcal{O})} \leq c\, i^\alpha\]
for some $\alpha>0$, and \eqref{gx005} becomes
\[\sum_{i=1}^\infty \la_i^2 \, i^{2\alpha}<\infty.\]
In particular, when $n=1$ or the domain is a hyperrectangle in higher dimension, the eigenfunctions $(e_i)_{i\in\mathbb{N}}$ are equi-bounded and \eqref{gx005} becomes $\sum_{i=1}^\infty \la_i^2 \, <\infty$.
}
\end{Remark}

\begin{Assumption}\label{Assumption2}
The mapping $\gamma$ belongs to $C^1_b(\mathbb{R})$ and there exist $\gamma_0$ and $\gamma_1$ such that
\begin{equation}
\label{nonlinearity assumption}
0<\gamma_0\leq \gamma(r)\leq \gamma_1,\ \ \ \ \ \ r\in\mathbb{R}.
\end{equation}
\end{Assumption}

In what follows, we shall define
\[g(r)=\int_0^r \gamma(\si)\,d\si,\ \ \ \ \ r \in\,\mathbb{R}.\]
Clearly $g(0)=0$ and $g^\prime(r)=\gamma(r)$. In particular, due to  \eqref{nonlinearity assumption},  $g$ is uniformly Lipschitz continuous on $\mathbb{R}$. Moreover, $g$ is strictly increasing and
\begin{equation}
\label{gs51}
\le(g(r_1)-g(r_2)\r)(r_1-r_2)\geq \gamma_0\,|r_1-r_2|^2,\ \ \ \ \ r_1, r_2 \in\,\mathbb{R}.
\end{equation}

 \begin{Assumption}
\label{Assumption3}
The mapping $f:\mathbb{R}\to\mathbb{R}$ is Lipschitz continuous. Moreover, there exist $\la<\a_1$, $\d<1$ and $c\geq 0$ such that 
\begin{equation}
\label{gs25}
f(r)r\leq \la\ r^2+c\le(1+|r|^{1+\d}\r),\ \ \ \ \ r \in\,\mathbb{R}.\end{equation}
\end{Assumption}

Any Lipschitz continuous function $f$ having sub-linear growth satisfies \eqref{gs25}. Condition \eqref{gs25} allows also linear growth for $f$, but in this case we need
\[\sup_{r_1,r_2 \in\,\mathbb{R}}\frac{f(r_1)-f(r_2)}{r_1-r_2}<\a_1.\]
In particular, \eqref{gs25} is satisfied if
\[\Vert f \Vert_{\tiny{\text{Lip}}}<\a_1.\]

\section{Well-posedness of equation \eqref{SPDE1} }
\label{sec3}

In this section we study the existence and uniqueness of solutions to the non-linear stochastic wave equations \eqref{SPDE1} with initial data $(u_0,v_0)\in\mathcal{H}_1$, for every fixed $\mu>0$.  
Notice that  the second order equations \eqref{SPDE1} can be written as the following system
\begin{equation}
\label{SPDE2}
\le\{\begin{array}{l}
\ds{du_\mu(t)=v_\mu(t) dt,\qquad u_\mu (0)=u_0,}\\[10pt]
\ds{dv_\mu(t)=\frac{1}{\mu}[\Delta u_\mu(t) - \gamma (u_\mu(t)) v_\mu(t)+f(u_\mu(t))]dt+ \frac{1}{\mu}\si(u_\mu(t))	dw^Q(t),\qquad v_\mu(0)=v_0,}\\[10pt]
\ds{u_\mu(t)_{\vert_{\partial \mathcal{O}}}=0,\ \ \ \ t>0.}
\end{array}\r.
\end{equation}
Now, if we  define 
\begin{equation}
\label{gs15}
    \eta:=\partial_t u +\frac {g(u)}\mu,
\end{equation}
and $z=(u,\eta)$, 
 system \eqref{SPDE2} can be rewritten as 
\begin{equation}
\label{SPDE3}
dz_\mu(t)=A_\mu(z_\mu(t))\,dt+\Sigma_\mu(z_\mu(t))\,dw^Q(t),\ \ \ \ z_\mu(0)=\left(u_0,v_0+\frac{g(u_0)}{\mu}\right),
\end{equation}
where we denoted
\[\Sigma_\mu(u,\eta)=\frac 1\mu(0,\si(u)),\ \ \ \ (u,\eta) \in\,\mathcal{H},\]
and
\[A_\mu(u,\eta)
=\left(\frac{-g(u)}{\mu}+\eta,\frac{1}{\mu}[\Delta u+f(u)]\right),\ \ \ \ (u,\eta) \in\,D(A_{\mu})=\mathcal{H}_1.\]
This means that  the adapted $\mathcal{H}_1$-valued process $z_\mu(t)=(u_\mu(t),\eta_\mu(t))$ is the unique solution of the equation
\begin{equation}
\label{SPDE5}
z_\mu(t)=(u_0,g(u_0)/\mu+v_0)+\int_0^t A_\mu(z_\mu(s))\,ds+\int_0^t \Sigma_\mu(z_\mu(s))dw^Q(s),
\end{equation}
if and only if the adapted $\mathcal{H}_1$-valued process $(u_\mu(t),v_\mu(t)):=(u_\mu(t),-g(u_\mu(t))/\mu+\eta_\mu(t))$ is the unique solution of the system
\begin{equation}
\label{gs52}
\le\{
\begin{array}{l}
\ds{u_\mu(t)=u_0+\int_0^t v_\mu(s)\,ds}\\[10pt]
\ds{\mu v_\mu(t)=\mu v_0+\int_0^t \le[\Delta u_\mu(s)-\gamma(u_\mu(s))v_\mu(s)+f(u_\mu(s))\r]\,ds+\int_0^t\si(u_\mu(s))dw^Q(s).}
\end{array}\r.\end{equation}
In this section, we are interested in the well posedness of equation \eqref{SPDE2} (and equivalently of \eqref{SPDE3}) and not on the dependence  of its solution on $\mu$.  Thus, without any loss of generality,  we will only consider the case when $\mu=1$ and, for simplicity of notation,  we will denote $A_1$ and $\Sigma_1$ by $A$ and $\Sigma$, respectively. 

We start our study of equation \eqref{SPDE3} by analyzing the non-linear operator $A$. To this purpose, it is immediate to check that
\begin{equation}
\label{gs11}
\Vert A(z)\Vert_{\mathcal{H}}\leq c\,\le(1+\Vert z\Vert _{\mathcal{H}_1}\r),\ \ \ \ \ z \in\,D(A),
\end{equation}
because $f$ and $g$ are both Lipschitz continuous. In next lemma, we prove that the nonlinear operator $A:D(A)\subset \mathcal{H}\to \mathcal{H}$ is {\em quasi-m-dissipative}. For all the details on  the definitions and the  basic results about maximal monotone nonlinear operators that we are using below, we refer to \cite[Chapters 2 and 3]{Barbu}. 

\begin{Lemma}\label{lem: Boperator}
Under Assumptions \ref{Assumption2} and \ref{Assumption3}, there exists $\kappa\geq 0$ such that for every $z_1, z_2 \in\,D(A)$
\begin{equation}
\label{gs1}
\langle A(z_1)-A(z_2),z_1-z_2\rangle_{\mathcal{H}}\leq \kappa \,\Vert z_1-z_2\Vert ^2_{\mathcal{H}}.
\end{equation}
Moreover, there exists $\la_0>0$ such that
\begin{equation}
\label{gs2}
\text{{\em Range}}(I-\la A)=\mathcal{H},\ \ \ \ \ \la \in\,(0,\la_0).
\end{equation}
\end{Lemma}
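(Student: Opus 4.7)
\textbf{Proof plan for Lemma \ref{lem: Boperator}.}
The plan is to verify the two properties directly from the definition of $A$ together with the structure of the inner product on $\mathcal{H}=H\times H^{-1}$. Write $z_i=(u_i,\eta_i)\in \mathcal{H}_1$ for $i=1,2$ and $\bar u=u_1-u_2$, $\bar\eta=\eta_1-\eta_2$. From the definition of $A$ one has
\[
A(z_1)-A(z_2)=\bigl(-(g(u_1)-g(u_2))+\bar\eta,\; \Delta\bar u+f(u_1)-f(u_2)\bigr),
\]
so that, by definition of the $\mathcal{H}$ inner product,
\[
\langle A(z_1)-A(z_2),z_1-z_2\rangle_{\mathcal{H}}=\langle -(g(u_1)-g(u_2))+\bar\eta,\bar u\rangle_{H}+\langle \Delta\bar u+f(u_1)-f(u_2),\bar\eta\rangle_{H^{-1}}.
\]
A direct computation on the eigenbasis $(e_i)$ using \eqref{InnerProducts} shows that $\langle \Delta v,w\rangle_{H^{-1}}=-\langle v,w\rangle_H$ for every $v\in H^1$ and $w\in H$, which gives the key cancellation $\langle \bar\eta,\bar u\rangle_H+\langle \Delta\bar u,\bar\eta\rangle_{H^{-1}}=0$. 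What remains is
\[
-\langle g(u_1)-g(u_2),\bar u\rangle_H+\langle f(u_1)-f(u_2),\bar\eta\rangle_{H^{-1}},
\]
and the first term is bounded above by $-\gamma_0\Vert \bar u\Vert_H^2$ thanks to \eqref{gs51}. For the second term, the Lipschitz continuity of $f$ together with \eqref{Poincare} yields $\Vert f(u_1)-f(u_2)\Vert_{H^{-1}}\leq \alpha_1^{-1/2}L_f\Vert \bar u\Vert_H$, so Cauchy--Schwarz and Young's inequality give a bound by $\frac{C}{2}\Vert \bar u\Vert_H^2+\frac{1}{2}\Vert \bar\eta\Vert_{H^{-1}}^2$. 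Choosing $\kappa$ large enough absorbs all positive contributions, proving \eqref{gs1}.

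For the range condition \eqref{gs2}, given $(\phi,\psi)\in \mathcal{H}$ I would solve $(I-\lambda A)(u,\eta)=(\phi,\psi)$ by algebraic reduction. The first component reads $u+\lambda g(u)-\lambda\eta=\phi$, so $\eta=\lambda^{-1}(u-\phi)+g(u)$; substituting into the second component reduces the problem to finding $u\in H^1$ such that
\[
T_\lambda u:=u+\lambda g(u)-\lambda^2\Delta u-\lambda^2 f(u)=\phi+\lambda\psi\quad\text{in }H^{-1}.
\]
The operator $T_\lambda:H^1\to H^{-1}$ is continuous (hence hemicontinuous), as all four ingredients are Lipschitz on the appropriate spaces. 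Using that $-\Delta:H^1\to H^{-1}$ is coercive with $\langle -\Delta u,u\rangle=\Vert u\Vert_{H^1}^2$, that $g$ is monotone by \eqref{gs51}, and that the bad term satisfies $|\langle f(u)-f(v),u-v\rangle_H|\leq L_f\Vert u-v\Vert_H^2$, one gets
\[
\langle T_\lambda u-T_\lambda v,u-v\rangle_{H^{-1},H^{1}}\geq (1+\lambda\gamma_0-\lambda^2 L_f)\Vert u-v\Vert_H^2+\lambda^2\Vert u-v\Vert_{H^1}^2,
\]
which for $\lambda\in(0,\lambda_0)$, with $\lambda_0$ small, is bounded below by $\lambda^2\Vert u-v\Vert_{H^1}^2$. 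This gives strong monotonicity, and in particular coercivity via $\langle T_\lambda u,u\rangle\geq \lambda^2\Vert u\Vert_{H^1}^2-\Vert T_\lambda 0\Vert_{H^{-1}}\Vert u\Vert_{H^1}$. By the Browder--Minty theorem (see \cite[Chapter 2]{Barbu}), $T_\lambda$ is surjective from $H^1$ onto $H^{-1}$, so a solution $u\in H^1$ exists, and then $\eta=\lambda^{-1}(u-\phi)+g(u)\in H$ since $g$ maps $H$ into itself by the boundedness of $\gamma$, proving \eqref{gs2}.

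The main subtlety I expect is not in the monotonicity step itself, but rather in tracking the precise duality pairings. The cancellation in the dissipativity estimate depends on using the correct $H^{-1}$-inner product inherited from the eigenbasis expansion \eqref{InnerProducts}; it is the reason we insist on the $\mathcal{H}=H\times H^{-1}$ topology (rather than, say, $\mathcal{H}_1$) for the quasi-dissipativity, even though the domain $D(A)=\mathcal{H}_1$ is strictly smaller. Once this algebraic identification is fixed, both estimates follow from standard monotone operator theory, and the bound \eqref{gs11} then ensures that the resolvent gives rise to a strongly continuous nonlinear semigroup on $\bar{D(A)}=\mathcal{H}$, which is precisely what the subsequent well-posedness argument needs.
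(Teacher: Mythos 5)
Your proof of the quasi-dissipativity estimate \eqref{gs1} is correct and is essentially identical to the paper's: the same decomposition of $\langle A(z_1)-A(z_2),z_1-z_2\rangle_{\mathcal{H}}$, the same cancellation $\langle \bar\eta,\bar u\rangle_H+\langle\Delta\bar u,\bar\eta\rangle_{H^{-1}}=0$ coming from \eqref{InnerProducts}, monotonicity of $g$ via \eqref{gs51}, and Lipschitz continuity of $f$ plus Poincar\'e and Young. For the range condition \eqref{gs2} you perform the same algebraic elimination of $\eta$ and arrive at the same scalar equation $u+\la g(u)-\la^2\Delta u-\la^2 f(u)=h_1+\la h_2$, but you then solve it differently: the paper inverts $(I-\la^2\Delta)$ and runs a Banach fixed-point argument on $H$, showing the map $\Gamma_\la(u)=(I-\la^2\Delta)^{-1}[-\la g(u)+\la^2 f(u)+h_1+\la h_2]$ is a contraction for small $\la$ (which, as a byproduct, hands back $u\in H^2\cap H^1$ by elliptic regularity), whereas you establish strong monotonicity, continuity and coercivity of $T_\la:H^1\to H^{-1}$ and invoke Browder--Minty. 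Your monotonicity computation $\langle T_\la u-T_\la v,u-v\rangle\ge (1+\la\gamma_0-\la^2 L_f)\Vert u-v\Vert_H^2+\la^2\Vert u-v\Vert_{H^1}^2$ is correct, and you rightly close the loop by checking that $\eta=\la^{-1}(u-h_1)+g(u)\in H$, so $(u,\eta)\in D(A)=\mathcal{H}_1$. The trade-off: the paper's route is more elementary (only the contraction principle and the bound $\Vert(I-\la^2\Delta)^{-1}\Vert_{\mathcal{L}(H)}\le 1$) and gives an explicit smallness threshold for $\la_0$ in terms of the Lipschitz constants of $g$ and $f$; your route leans on the heavier Minty--Browder surjectivity theorem but is somewhat more robust, since it only uses monotonicity of $g$ rather than its Lipschitz constant and would survive, say, a merely monotone continuous $g$ of linear growth. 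Either argument proves the lemma; your closing remark about the resolvent generating a nonlinear semigroup is not needed for the statement but is consistent with how the lemma is used afterwards.
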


\begin{proof} For every $z_1=(u_1,\eta_1)$ and $z_2=(u_2,\eta_2)$ in $D(A)$ we have
\begin{align*}
    \langle A(z_1)-A(z_2),z_1-z_2\rangle_{\mathcal{H}} &=-\langle g(u_1)-g(u_2),u_1-u_2\rangle_H+\langle \eta_1-\eta_2,u_{1}-u_2\rangle_H\\[10pt]
    &\quad +\langle \Delta u_1-\Delta u_2,\eta_1-\eta_2\rangle_{H^{-1}}+\langle f(u_1)-f(u_2),\eta_1-\eta_2\rangle_{H^{-1}}\\[10pt]
    &\leq -\gamma_0 \Vert u_1-u_2\Vert _H^2+\Vert f(u_1)-f(u_2)\Vert _{H^{-1}}\Vert \eta_1-\eta_2\Vert _{H^{-1}}\\[10pt]
    &\leq -\gamma_0 \Vert u_1-u_2\Vert _H^2+c\le(\Vert u_1-u_2\Vert _H^2+\Vert \eta_1-\eta_2\Vert ^2_{H^{-1}}\r),
\end{align*}
where the first inequality follows from \eqref{gs51}, and the second inequality follows from the Lipschitz continuity of $f$ and the Poincar\'e inequality \eqref{Poincare}.
In particular, there exists some $\kappa\geq 0$ such that \eqref{gs1} holds.

Next, in order to prove \eqref{gs2}, we need to show that, if $\la$ is sufficiently small, then for every $h=(h_1,h_2) \in\,\mathcal{H}$, there exists 
$z=(u, \eta) \in\,\mathcal{H}_1$ such that 
\[z-\la A(z)=h,\] or, equivalently, there exists some $u \in\,H^1$ such that
\begin{equation}
\label{gs3}
u-\la^2 \Delta u=-\la g(u)+\la^2 f(u)+(h_1+\la h_2).\end{equation}
In particular, if we define 
\[\Gamma_\la(u)=(I-\la^2 \Delta)^{-1}\le[-\la g(u)+\la^2 f(u)+(h_1+\la h_2)\r],\]
we need to prove that there exists some $\la_0>0$ such that  $\Gamma_\la:H\to H$ 
is a contraction, for every $\la \in\,(0,\la_0)$. By
\[\Vert(I-\la^2 \Delta)^{-1}\Vert_{\mathcal{L}(H)}\leq 1,\ \ \ \ \ \la>0,\]
we have
\begin{align*}
    \le\Vert \Gamma_\la(u_1)-\Gamma_\la(u_2)\r\Vert _H &\leq c\,\la\,\le(\Vert (g(u_1)-g(u_2)\Vert _H+\la\,\Vert f(u_1)-f(u_2)\Vert _H\r)\\[10pt]
    &\leq c\,\la(1+\la)\,\Vert u_1-u_2\Vert _H,
\end{align*}
which implies that $\Gamma_\lambda$ is a contraction for small enough $\lambda$. 
\end{proof}

Now, if we define $\bar{\la}:=\la_0\wedge \kappa^{-1}$, due to Lemma \ref{lem: Boperator} we have that the operator
\[J_\la(z):=(I-\la A)^{-1}(z),\ \ \ z\in\,\mathcal{H},\ \ \ \ \la \in\,(0,\bar{\la}),\]
is well-defined, and  is Lipschitz continuous from $\mathcal{H}$ into $\mathcal{H}$, with Lipschitz constant $(1-\la \kappa)^{-1}$ (see \cite[Proposition 3.2]{Barbu}).
Thus, for every $\la \in\,(0,\bar{\la})$, we can introduce the {\em Yosida approximation} of $A$, defined as
\begin{equation}\label{YosidaApprox}
    A^\la(z):=\frac 1\la \le[J_\la(z)-z\r]=A(J_\la(z)),\ \ \ \ z \in\,\mathcal{H}.
\end{equation}
By the Lipschitz continuity of $J_\lambda$, it is easy to check that 
\[\Vert A^\la(z_1)-A^\la(z_2)\Vert_{\mathcal{H}}\leq  \frac{2}{\la(1-\la \kappa)}\Vert z_1-z_2\Vert_{\mathcal{H}},\ \ \ \ z_1, z_2 \in\,\mathcal{H}.\]
Moreover, $A^\la$ is 
 quasi-dissipative in $\mathcal{H}$. Actually, by \eqref{gs1} and the definition of $A^\la$ in \eqref{YosidaApprox}, we have
\begin{equation}
\label{ge8}
\begin{aligned}
\langle A^\la(z_1)-A^\la(z_2),z_1-z_2\rangle _\mathcal{H}&=-\langle A^\la(z_1)-A^\la(z_2),(J_\lambda(z_1)-z_1)-(J_\lambda(z_2)-z_2)\rangle _\mathcal{H} \\[10pt]
&\quad +\langle A^\la(z_1)-A^\la(z_2),J_\lambda(z_1)-J_\lambda(z_2)\rangle _\mathcal{H}\\[10pt]
&\leq \kappa\Vert J_\lambda(z_1)-J_\lambda(z_2)\Vert_{\mathcal{H}}^2\\[10pt]
&\leq\frac{\kappa}{1-\la \kappa}\,\Vert z_1-z_2\Vert _{\mathcal{H}}^2,
\end{aligned}
\end{equation}
for any $z_1, z_2 \in\,\mathcal{H}$.
Moreover, as shown in \cite[Proposition 3.2]{Barbu}
for every $z \in\,D(A)$ we have
\begin{equation}
\label{gs4}
\Vert A^\la(z)\Vert _{\mathcal{H}}\leq \frac 1{1-\la \kappa}\Vert A(z)\Vert _{\mathcal{H}},
\end{equation}
and then
\begin{equation}
\label{gs121}
\Vert J_\la(z)-z\Vert _{\mathcal{H}}=\la \Vert A^\la(z)\Vert _{\mathcal{H}}\leq \frac \la{1-\la \kappa}\Vert A(z)\Vert _{\mathcal{H}}.
\end{equation}
Finally, as shown in \cite[Proposition 3.5]{Barbu}, we have
\begin{equation}
\label{gs5}
\lim_{\la\to 0}\Vert A^\la(z)-A(z)\Vert_{\mathcal{H}}=0,\ \ \ \ \ z \in\,D(A).
\end{equation}
Now we are ready to prove the main result of this section.
\begin{Theorem}
\label{teo4.2}
Under Assumptions \ref{Assumption1}, \ref{Assumption2} and \ref{Assumption3}, for every $(u_0, v_0) \in\,\mathcal{H}_1$ and every $T>0$ and $\mu>0$, there exists a unique adapted process $(u_\mu, v_\mu) \in\,L^2(\Omega,C([0,T],\mathcal{H}_1))$ which solves equation \eqref{gs52}.
 \end{Theorem}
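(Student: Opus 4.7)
Given the quasi-m-dissipativity of $A$ established in Lemma \ref{lem: Boperator}, the natural strategy is the Yosida approximation method for stochastic equations driven by monotone operators in Hilbert spaces, in the spirit of the Barbu--Da Prato framework. Without loss of generality I take $\mu=1$ and write $A$, $\Sigma$ in place of $A_\mu$, $\Sigma_\mu$.

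First, for every $\lambda\in(0,\bar\lambda)$ I would solve the regularized SDE
\[
dz^\lambda(t) = A^\lambda(z^\lambda(t))\,dt + \Sigma(z^\lambda(t))\,dw^Q(t),\qquad z^\lambda(0)=(u_0,v_0+g(u_0)),
\]
on $\mathcal{H}$. Since $A^\lambda$ is globally Lipschitz on $\mathcal{H}$ and $\Sigma:\mathcal{H}\to\mathcal{L}_2(H_Q,\mathcal{H})$ is Lipschitz by Assumption \ref{Assumption1}, a standard Picard iteration on $L^2(\Omega;C([0,T];\mathcal{H}))$ produces a unique adapted solution $z^\lambda$. It\^o's formula applied to $\|z^\lambda\|_\mathcal{H}^2$, together with \eqref{ge8} and the boundedness of $\Sigma$, yields via Gronwall a bound on $\mathbb{E}\sup_{t\le T}\|z^\lambda(t)\|_\mathcal{H}^2$ uniform in $\lambda$.

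To show that $(z^\lambda)_\lambda$ is Cauchy in $L^2(\Omega;C([0,T];\mathcal{H}))$, I would apply It\^o to $\|z^{\lambda_1}(t)-z^{\lambda_2}(t)\|_{\mathcal{H}}^2$ and use $z^\lambda=J_\lambda z^\lambda+\lambda A^\lambda(z^\lambda)$ to rewrite the drift pairing; the bound \eqref{ge8} controls the term $\langle A(J_{\lambda_1}z^{\lambda_1})-A(J_{\lambda_2}z^{\lambda_2}),\,J_{\lambda_1}z^{\lambda_1}-J_{\lambda_2}z^{\lambda_2}\rangle_\mathcal{H}$, while the residual cross terms are of order $\lambda_i \|A^{\lambda_i}(z^{\lambda_i})\|_\mathcal{H}$. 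Together with the Lipschitz continuity of $\Sigma$, this produces a Gronwall inequality whose inhomogeneous part vanishes as $\lambda_1,\lambda_2\to 0$, provided that
\[
\sup_{\lambda\in(0,\bar\lambda)}\mathbb{E}\sup_{t\in[0,T]}\|A^\lambda(z^\lambda(t))\|^2_\mathcal{H}<\infty.
\]
This a priori estimate, which I would establish by applying It\^o to $\|A^\lambda(z^\lambda(t))\|_\mathcal{H}^2$ and using the initial bound $\|A^\lambda(z_0)\|_\mathcal{H}\le \|A(z_0)\|_\mathcal{H}/(1-\lambda\kappa)$ from \eqref{gs4} (which is finite since $z_0\in D(A)=\mathcal{H}_1$), is the main technical hurdle of the argument. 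Once the Cauchy property is secured, the limit $z\in L^2(\Omega;C([0,T];\mathcal{H}))$ satisfies $J_\lambda z^\lambda\to z$ and, by the closedness of $A$ together with the uniform bound on $A^\lambda(z^\lambda)$, one identifies $z(t)\in D(A)=\mathcal{H}_1$ and $A^\lambda(z^\lambda)\to A(z)$ weakly, so that $z$ solves \eqref{SPDE5} in the strong sense.

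Uniqueness is immediate from It\^o's formula for $\|z^1-z^2\|_\mathcal{H}^2$ applied to two putative solutions and the combination of \eqref{gs1} and the Lipschitz continuity of $\Sigma$. To upgrade the regularity to $L^2(\Omega;C([0,T];\mathcal{H}_1))$ I would exploit the structural observation that $\Sigma(u,\eta)=(0,\si(u))$ takes values in $\{0\}\times H\subset\mathcal{H}_1$, so that the stochastic convolution is automatically $\mathcal{H}_1$-valued; writing \eqref{SPDE5} in mild form with respect to the $C_0$-group on $\mathcal{H}_1$ generated by the linear wave part $A_0(u,\eta)=(\eta,\Delta u)$ and treating the perturbation $(-g(u),f(u))$ as a forcing in $\mathcal{H}_1$ allows an energy estimate that closes the argument and delivers path continuity in $\mathcal{H}_1$.
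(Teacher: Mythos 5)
Your overall architecture (Yosida regularization, uniform estimates, Cauchy property in $\mathcal{H}$, identification of the limit, and the upgrade to $C([0,T];\mathcal{H}_1)$ via the wave group plus the $\mathcal{H}_1$-valued forcing $(-g(u),f(u))$ and noise) is the same as the paper's, and your uniqueness and final-regularity steps are fine. The genuine gap is the step you yourself single out as the main hurdle: the uniform bound
\[
\sup_{\lambda}\,\mathbb{E}\sup_{t\in[0,T]}\Vert A^\lambda(z^\lambda(t))\Vert^2_{\mathcal{H}}<\infty,
\]
which you propose to prove by applying It\^o's formula to $\Vert A^\lambda(z^\lambda(t))\Vert^2_{\mathcal{H}}$. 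This is not available under the standing assumptions: the map $z\mapsto \Vert A^\lambda(z)\Vert^2_{\mathcal{H}}$ involves $A\circ J_\lambda$, where $f$ is merely Lipschitz (Assumption \ref{Assumption3}) and $J_\lambda=(I-\lambda A)^{-1}$ is merely Lipschitz, so the functional is not of class $C^2$ (not even $C^1$) on $\mathcal{H}$ and It\^o's formula cannot be invoked; moreover, even after mollifying, the drift and correction terms would involve derivatives of $J_\lambda$ for which no dissipativity-type structure is at hand, so it is not clear the resulting inequality closes. The deterministic analogue of the estimate you want ($\Vert A^\lambda u_\lambda(t)\Vert\le e^{\omega t}\Vert A^\lambda u_0\Vert$) is obtained by a time-shift/difference-quotient argument using uniqueness, which does not transfer to the stochastic equation because the shifted equation is driven by shifted noise.

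The estimate you need is true, but the paper obtains it by a different and structure-specific route: it applies It\^o's formula to the $\mathcal{H}_1$-energy $K(z)=\Vert z\Vert^2_{\mathcal{H}_1}$ for $z_\lambda$, where the conservative wave terms cancel and the damping produces the good term $-\gamma_0\Vert J_\lambda(z_\lambda)_1\Vert^2_{H^1}$ (Steps 1--2, giving \eqref{gs7} and \eqref{gs1000}); the control of the residual terms $\lambda_i\Vert A^{\lambda_i}(z^{\lambda_i})\Vert^2_{\mathcal{H}}$ in the Cauchy estimate then follows from the linear growth bound \eqref{gs11} combined with \eqref{gs4}, i.e. $\Vert A^\lambda(z)\Vert_{\mathcal{H}}\le c(1+\Vert z\Vert_{\mathcal{H}_1})$, rather than from a direct bound on $A^\lambda(z^\lambda)$. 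If you replace your proposed It\^o computation for $\Vert A^\lambda(z^\lambda)\Vert^2_{\mathcal{H}}$ by this $\mathcal{H}_1$-energy estimate, the rest of your argument (including the identification of the limit, which the paper does even more simply by using the Lipschitz continuity of $A$ from $\mathcal{H}$ into $\mathcal{H}_{-1}=H^{-1}\times H^{-2}$ instead of demiclosedness) goes through.
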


\begin{proof}
Without loss of generality, we only consider $\mu=1$ here. As we have seen, the well-posedness of equation \eqref{gs52} is equivalent to the well-posedness of equation \eqref{SPDE3}. Therefore, here we deal with equation \eqref{SPDE3}. 

For every $\la \in\,(0,\bar{\la})$, we introduce the approximating problem
\begin{equation}
\label{SPDE3lambda}
dz_\la(t)=A^\la(z_\la(t))\,dt+\Sigma(z_\la(t))\,dw^Q(t),\ \ \ \ z_\la(0)=(u_0,v_0+g(u_0)).
\end{equation}
By Assumption \ref{Assumption1}, the mapping $\Sigma:\mathcal{H}\to \mathcal{L}_2(H_Q,\mathcal{H}_1)$ is bounded and Lipschitz continuous since 
\begin{equation}
    \Vert \Sigma(z) \Vert_{\mathcal{L}_2(H_Q,\mathcal{H}_1)} = \Vert \sigma(u) \Vert_{\mathcal{L}_2(H_Q,H)}
\end{equation}
for all $z=(u,\eta)\in \mathcal{H}$. Then, since $A^\la$ is Lipschitz continuous in $\mathcal{H}$, there exists a unique solution $$z_\la=(u_\la,\eta_\la) \in\,L^p(\Omega,C([0,T];\mathcal{H}))$$ 
to equation \eqref{SPDE3lambda}, for every $T>0$ and $p\geq 1$, using the classical fixed point theorem for contractions. Moreover, thanks to \eqref{ge8}, we have
\begin{align*}
    \frac{d}{dt}\,\mathbb{E}\Vert z_\lambda (t)\Vert_{\mathcal{H}}^2 &=2\mathbb{E}\left\langle A^\lambda(z_\lambda (t)),z_\lambda (t) \right\rangle_{\mathcal{H}} + \mathbb{E}\Vert \Sigma(z_\la(t))\Vert^2_{\mathcal{L}_2(H_Q,\mathcal{H})} \\[10pt]
    &=2\mathbb{E}\left\langle A^\lambda(z_\lambda (t))-A^\lambda (0),z_\lambda (t) \right\rangle_{\mathcal{H}} + 2\mathbb{E}\langle A^\lambda(0),z_\lambda (t) \rangle_{\mathcal{H}} + \mathbb{E}\Vert \Sigma(z_\la(t))\Vert^2_{\mathcal{L}_2(H_Q,\mathcal{H})}\\[10pt]
    &\leq \frac{2\kappa}{1-\lambda\kappa} \mathbb{E}\Vert z_\lambda (t)\Vert_{\mathcal{H}}^2+ \Vert A^\lambda (0)\Vert_{\mathcal{H}}^2 + \mathbb{E}\Vert z_\lambda (t)\Vert_{\mathcal{H}}^2 + c.
\end{align*}
Moreover, due to  \eqref{gs4},
\begin{equation*}
    \Vert A^\lambda (0)\Vert_{\mathcal{H}}^2\leq \frac{1}{(1-\lambda\kappa)^2}\Vert A(0)\Vert_{\mathcal{H}}^2 =\frac{1}{(1-\lambda\kappa)^2} \Vert f(0)\Vert_{H^{-1}}^2.
\end{equation*}
 Therefore, there exists a constant $c$, independent of $\la \leq \bar{\la}/2$, such that
\begin{equation*}
    \frac{d}{dt}\,\mathbb{E}\Vert z_\lambda (t)\Vert_{\mathcal{H}}^2\leq c\le(\mathbb{E}\Vert z_\lambda (t)\Vert_{\mathcal{H}}^2 +1\r).
\end{equation*}
By Gr\"onwall's inequality, this implies
 \begin{equation}
\label{gs9}
\sup_{\la \in\,(0,\bar{\la}/2)}\sup_{t \in\,[0,T]}\mathbb{E}\Vert z_\la(t)\Vert _{\mathcal{H}}^2<\infty.
\end{equation}
In the rest of the proof, for an arbitrary $z=(u,\eta) \in\,\mathcal{H}$, we denote $z_1=u$ and $z_2=\eta$.

\medskip
{\em Step 1.} There exists $c>0$  such that
\begin{equation}
\label{gs7}
\mathbb{E}\Vert z_\la(t)\Vert ^2_{\mathcal{H}_1}+\gamma_0\int_0^t\mathbb{E}\Vert J_\la(z_\la(t))_1\Vert ^2_{H^1}\,ds\leq \Vert z_0\Vert ^2_{\mathcal{H}_1}+c\int_0^t \mathbb{E}\Vert z_\la(s)\Vert ^2_{\mathcal{H}}\,ds+ct,
\end{equation}
for all $\la \in\,(0,\hat{\la})$, where
\[\hat{\lambda} :=\frac{\bar{\la}}2\wedge\frac{\gamma_0}{8\Vert f\Vert_{\tiny{\text{Lip}}}}.\]

{\em Proof of Step 1.} We apply the It\^o formula to 
\[K(z)=\Vert z\Vert ^2_{\mathcal{H}_1}=\Vert u\Vert _{H^1}^2+\Vert \eta\Vert ^2_H,\]
 and we get
\begin{equation}\label{gs10}
    \begin{aligned}
   dK(z_\la(t))&=  \langle A^\la(z_\la(t)),DK(z_\la(t))\rangle_{\mathcal{H}_1}dt+ \Vert \Sigma(z_\la(t))\Vert^2_{\mathcal{L}_2(H_Q,\mathcal{H}_1)}\,dt\\[10pt]
  & \quad + \langle D K(z_\la(t)),\Sigma(z_\la(t))dw^Q(t)\rangle_{\mathcal{H}_1}\\[10pt]
    &=2\le[\langle A^\la(z_\la(t))_1,(-\Delta)z_\la(t)_1\rangle_H+\langle A^\la(z_\la(t))_2,z_\la(t)_2\rangle_H\r]\,dt+ \Vert \sigma(z_\la(t)_1)\Vert^2_{\mathcal{L}_2(H_Q,H)}\,dt\\[10pt]
    &\quad +2\langle z_\la(t)_2,\si(z_\la(t)_1)\,dw^Q(t)\rangle_H\\[10pt]
    &=:2\Phi_\la(t)\,dt+  \Vert \sigma(z_\la(t)_1)\Vert^2_{\mathcal{L}_2(H_Q,H)} \,dt+2\langle z_\la(t)_2,\si(z_\la(t)_1)\,dw^Q(t)\rangle_H.
    \end{aligned}
\end{equation}
Recall that $A^\la(z)=A(J_\la(z))=\frac{1}{\la}[J_\la(z)-z]$, which implies  that for every $z \in\,\mathcal{H}$
\[\begin{cases}
\ds{J_\la(z)_1+\la g(J_\la(z)_1)-\la J_\la(z)_2=z_1,}\\[10pt]
\ds{J_\la(z)_2-\la \Delta (J_\la(z)_1)-\la f(J_\la(z)_1)=z_2.}
\end{cases}\]
Therefore, we have
\begin{align*}
    \Phi_\la &=\langle -g(J_\la(z_\la )_1)+J_\la(z_\la )_2,(-\Delta)\le[J_\la(z_\la )_1+\la g(J_\la(z_\la )_1)-\la J_\la(z_\la )_2\r]\rangle_H\\[10pt]
    &\quad +\langle \Delta (J_\la(z_\la )_1)+ f(J_\la(z_\la )_1),J_\la(z_\la )_2-\la \Delta (J_\la(z_\la )_1)-\la f(J_\la(z_\la )_1)\rangle_H\\[10pt]
    &=-\langle\gamma(J_\la(z_\la )_1)\nabla  (J_\la(z_\la )_1),\nabla (J_\la(z_\la )_1)\rangle_H-\la \Vert g(J_\la(z_\la )_1)\Vert ^2_{H^1}-\la \Vert J_\la(z_\la )_2\Vert ^2_{H^1}\\[10pt]
    &\quad -\la \Vert J_\la(z_\la )_1\Vert ^2_{H^2}-\la \Vert f(J_\la(z_\la )_1)\Vert _H^2-2\la\langle \Delta (J_\la(z_\la )_1),f(J_\la(z_\la )_1)\rangle_H\\[10pt]
    &\quad +\langle f(J_\la(z_\la )_1),J_\la(z_\la )_2\rangle_H\\[10pt]
    & < -\gamma_0\,\Vert J_\la(z_\la )_1\Vert ^2_{H^1} -2\la\langle \Delta (J_\la(z_\la )_1),f(J_\la(z_\la )_1)\rangle_H+\langle f(J_\la(z_\la )_1),J_\la(z_\la )_2\rangle_H,
\end{align*}
where the last inequality uses Assumption \ref{Assumption2}.
Since $f:\mathbb{R}\to \mathbb{R}$ is Lipschitz continuous, we have $f\circ u \in\,H^1$, for every $u \in\,H^1$ and
\[\Vert f\circ u\Vert_{H^1}\leq \Vert f\Vert_{\tiny{\text{Lip}}}\Vert u\Vert_{H^1},\]
which implies
\begin{equation*}
    \vert\langle \Delta (J_\la(z_\la )_1),f(J_\la(z_\la )_1)\rangle_H\vert \leq \Vert J_\la(z_\la )_1 \Vert_{H^1}\Vert f(J_\la(z_\la )_1) \Vert_{H^1}\leq \Vert f\Vert_{\tiny{\text{Lip}}}\Vert J_\la(z_\la )_1 \Vert_{H^1}^2,
\end{equation*}
and
\begin{equation*}
    \le|\langle f(J_\la(z_\la )_1),J_\la(z_\la )_2\rangle_H\r|\leq \Vert f(J_\la(z_\la )_1)\Vert _{H^1}\Vert J_\la(z_\la)_2\Vert_{H^{-1}}\leq \Vert f\Vert_{\tiny{\text{Lip}}}\,\Vert J_\la(z_\la )_1\Vert _{H^1}\Vert J_\la(z_\la)_2\Vert_{H^{-1}}.
\end{equation*}
Then, thanks to Young's inequality, we  get
\begin{equation}\label{gs30}
    \begin{aligned}
    \Phi_\la &< -\gamma_0\,\Vert J_\la(z_\la )_1\Vert ^2_{H^1} +2\lambda \Vert f\Vert_{\tiny{\text{Lip}}}\Vert J_\la(z_\la )_1 \Vert_{H^1}^2+ \Vert f\Vert_{\tiny{\text{Lip}}}\,\Vert J_\la(z_\la )_1\Vert _{H^1}\Vert J_\la(z_\la)_2\Vert_{H^{-1}}\\[10pt]
    &\leq -\frac{\gamma_0}2\,\Vert J_\la(z_\la )_1\Vert ^2_{H^1}+c\,\Vert J_\la(z_\la)_2\Vert_{H^{-1}}^2,
    \end{aligned}
\end{equation}
for $\lambda \in\,(0, \gamma_0/(8\Vert f\Vert_{\tiny{\text{Lip}}}))$. Therefore, if we integrate \eqref{gs10} in time, apply \eqref{gs30}, and use both the boundedness of $\Vert \sigma(u)\Vert^2_{\mathcal{L}_2(H_Q,H)}$ and the Lipschitz continuity of $J_\la$ on $\mathcal{H}$, we obtain 
\begin{equation}\label{gx001}
\begin{aligned}
&\Vert z_\la(t)\Vert_{\mathcal{H}_1}^2+\gamma_0\int_0^t \Vert J_\la(z_\la(s) )_1\Vert ^2_{H^1}\,ds\\[10pt]
&\leq \Vert z_0\Vert_{\mathcal{H}_1}^2+c\int_0^t \Vert z_\la(s)\Vert_{\mathcal{H}}^2\,ds+\si_\infty^2\,t+2\int_0^t \langle z_\la(s)_2,\si(z_\la(s)_1)dw^Q(s)\rangle_H,
\end{aligned}
\end{equation}
from which we can derive \eqref{gs7} after taking expectation.

\medskip

{\em Step 2.}
There exists $c_T>0$ such that for every $\la \in\,(0,\hat{\la}/2)$
\begin{equation}
\label{gs1000}
\mathbb{E}\sup_{t \in\,[0,T]}\Vert z_\la(t)\Vert_{\mathcal{H}_1}^2\leq \Vert z_0\Vert_{\mathcal{H}_1}^2+c_T.
\end{equation}
{\em Proof of Step 2.} By taking the supremum in time for \eqref{gx001}, we have
\[
\begin{aligned}
& \mathbb{E}\sup_{t \in\,[0,T]}\Vert z_\la(t)\Vert_{\mathcal{H}_1}^2\\[10pt]
& \leq \Vert z_0\Vert_{\mathcal{H}_1}^2+c\int_0^T\mathbb{E} \Vert z_\la(s)\Vert_{\mathcal{H}}^2\,ds+c\,T+ 2\,\mathbb{E}\sup_{t\in[0,T]}\left\vert \int_0^t\langle z_\la(s)_2,\si(z_\la(s)_1)\,dw^Q(s)\rangle_H\right\vert\\[10pt]
& \leq \Vert z_0\Vert_{\mathcal{H}_1}^2+c\int_0^T\mathbb{E} \Vert z_\la(s)\Vert_{\mathcal{H}}^2\,ds+c\,T+c\le(\mathbb{E} \int_0^T \Vert  \si (z_\la(s)_1)\Vert_{\mathcal{L}_2(H_Q,H)}^2 \Vert z_\la(s)_2\Vert_{H}^2\,ds\r)^{\frac 12}\\[10pt]
& \leq \Vert z_0\Vert_{\mathcal{H}_1}^2+c\int_0^T\mathbb{E} \Vert z_\la(s)\Vert_{\mathcal{H}}^2\,ds+c_T+\frac 12\, \mathbb{E}\sup_{t \in\, [0,T]}\Vert z_\la(t)\Vert_{\mathcal{H}_1}^2,
\end{aligned}\]
where the last inequality follows from Assumption \ref{Assumption1} and Young's inequality. 
Due to \eqref{gs9}, this implies \eqref{gs1000}.

\medskip

{\em Step 3.} There exists $z \in\,L^\infty(0,T;L^2(\Omega,\mathcal{H}))$ such that
\begin{equation}
\label{gs6}
\lim_{\la\to 0}\,\sup_{t \in\,[0,T]}\,\mathbb{E}\Vert z_\la(t)-z(t)\Vert _{\mathcal{H}}^2=0.
\end{equation}

{\em Proof of Step 3.} For every $\la, \nu \in\,(0,\hat{\la}/2)$, we set 
\[\varrho_{\la, \nu}(t):=z_\la(t)-z_\nu(t),\ \ \ \ t \in\,[0,T].\] Then, we have
\[d\varrho_{\la, \nu}(t)=\le[A^\la(z_\la(t))-A^\nu(z_\nu(t))\r]\,dt+\le[\Sigma(z_\la(t))-\Sigma(z_\nu(t))\r]dw^Q(t),\ \ \ \ \varrho_{\la, \nu}(0)=0.\]
Together with \eqref{YosidaApprox}, i.e.,  $z=J_\lambda(z)-\lambda A(J_\lambda(z))$, we have
\begin{align*}
 d\,\Vert \varrho_{\la, \nu}(t)\Vert _{\mathcal{H}}^2 & =2\le[\langle A^\la(z_\la(t))-A^\nu(z_\nu(t)),\varrho_{\la, \nu}(t)\rangle_{\mathcal{H}}+\Vert \Sigma(z_\la(t))-\Sigma(z_\nu(t))\Vert^2_{\mathcal{L}_2(H_Q,\mathcal{H})}\r]\,dt\\[10pt]
 & \quad +2\langle \varrho_{\la, \nu}(t),\le[\Sigma(z_\la(t))-\Sigma(z_\nu(t))\r]dw^Q(t)\rangle_{\mathcal{H}}\\[10pt]
    &=\big[\langle A(J_\la(z_\la(t)))-A(J_\nu(z_\nu(t))),J_\la(z_\la(t))-J_\nu(z_\nu(t))\rangle_{\mathcal{H}}\\[10pt]
    &\quad - \langle A^\la(z_\la(t))-A^\nu(z_\nu(t)),\la A^\la(z_\la(t))-\nu A^\nu(z_\nu(t))\rangle_{\mathcal{H}}\\[10pt]
    & \quad  +\Vert \Sigma(z_\la(t))-\Sigma(z_\nu(t))\Vert^2_{\mathcal{L}_2(H_Q,\mathcal{H})}\big]\,dt+2\langle \varrho_{\la, \nu}(t),\le[\Sigma(z_\la(t))-\Sigma(z_\nu(t))\r]dw^Q(t)\rangle_{\mathcal{H}}\\[10pt]
    &\leq \big[\kappa \Vert J_\la(z_\la(t))-J_\nu(z_\nu(t))\Vert _{\mathcal{H}}^2+c\le(\la+\nu\r)\le(\Vert A^\la(z_\la(t))\Vert _{\mathcal{H}}^2+\Vert A^\nu(z_\nu(t))\Vert _{\mathcal{H}}^2\r)\\[10pt]
    & \quad + c\Vert \varrho_{\la, \nu}(t)\Vert _{\mathcal{H}}^2\big]\,dt+2\langle \varrho_{\la, \nu}(t),\le[\Sigma(z_\la(t))-\Sigma(z_\nu(t))\r]dw^Q(t)\rangle_{\mathcal{H}}\\[10pt]
    &\leq c\big[ \Vert \varrho_{\la, \nu}(t)\Vert _{\mathcal{H}}^2+c\le(\la+\nu\r)\le(\Vert z_\la(t)\Vert ^2_{\mathcal{H}_1}+\Vert z_\nu(t)\Vert ^2_{\mathcal{H}_1}+1\r)\big]\,dt\\[10pt]
    & \quad+2\langle \varrho_{\la, \nu}(t),\le[\Sigma(z_\la(t))-\Sigma(z_\nu(t))\r]dw^Q(t)\rangle_{\mathcal{H}},
\end{align*}
where the first inequality follows from Lemma \ref{lem: Boperator} and the Lipschitz continuity of $\Sigma$, and the second inequality follows from \eqref{gs11}, \eqref{YosidaApprox} and \eqref{gs4}. 
Therefore
\[\mathbb{E}\Vert \varrho_{\la, \nu}(t)\Vert _{\mathcal{H}}^2\leq c\int_0^t  \mathbb{E}\Vert \varrho_{\la, \nu}(s)\Vert _{\mathcal{H}}^2\,ds+c\int_0^t \le(\la+\nu\r)\le(\mathbb{E}\Vert z_\la(s)\Vert ^2_{\mathcal{H}_1}+\mathbb{E}\Vert z_\nu(s)\Vert ^2_{\mathcal{H}_1}+1\r)\,ds.\]
Thanks to Gr\"onwall's inequality, this yields 
\[\sup_{t \in\,[0,T]}\mathbb{E}\,\Vert \varrho_{\la, \nu}(t)\Vert _{\mathcal{H}}^2\leq c_T\le(\la+\nu\r)\int_0^T\le(\mathbb{E}\Vert z_\la(s)\Vert ^2_{\mathcal{H}_1}+\mathbb{E}\Vert z_\nu(s)\Vert ^2_{\mathcal{H}_1}+1\r)\,ds.\]
In view of  \eqref{gs1000}, we conclude that, for every sequence $(\la_n)_{n \in\,\mathbb{N}}$ converging to zero, the sequence $(z_{\la_n})_{n \in\,\mathbb{N}}$ is Cauchy in $L^\infty(0,T;L^2(\Omega;\mathcal{H}))$.
In particular, there exists $z \in\,L^\infty(0,T;L^2(\Omega;\mathcal{H}))$ such that \eqref{gs6} holds.
\medskip

{\em Step 4.} For every $t \in\,[0,T]$, we have
\begin{equation}
\label{gs1001}
z(t)=(u_0,g(u_0)+v_0)+\int_0^t A(z(s))\,ds+\int_0^t \Sigma(z(s))dw^Q(s).
\end{equation}
Moreover, $z \in\,L^2(\Omega;C([0,T];\mathcal{H}_1))$.

{\em Proof of Step 4.}
For every $t \in\,[0,T]$ we have
\begin{equation}
\label{gs12}
z_\la(t)=(u_0,g(u_0)+v_0)+\int_0^t A^\la(z_\la(s))\,ds+\int_0^t \Sigma(z_\la(s))dw^Q(s).\end{equation}
If we define  $\mathcal{H}_{-1}=H^{-1}\times H^{-2}$, we have
\[\Vert A(z_1)-A(z_2)\Vert _{\mathcal{H}_{-1}}\leq c\,\Vert z_1-z_2\Vert _{\mathcal{H}},\ \ \ \ z_1,\ z_2 \in\,\mathcal{H}.\]
Since $z(t) \in\,L^2(\Omega;\mathcal{H})$, by \eqref{gs11} and \eqref{gs4} this implies
\begin{align*}
    \Vert A^\la(z_\la(s))-A(z(s))\Vert _{\mathcal{H}_{-1}} &=\Vert A(J_\la(z_\la(s)))-A(z(s))\Vert _{\mathcal{H}_{-1}}\\[10pt]
    &\leq c\,\Vert J_\la(z_\la(s))-z_\la(s)\Vert _{\mathcal{H}}+c\,\Vert z_\la(s)-z(s)\Vert _{\mathcal{H}}\\[10pt]
    &\leq c\,\la (\Vert z_\la(s)\Vert _{\mathcal{H}_1}+1)+c\,\Vert z_\la(s)-z(s)\Vert _{\mathcal{H}}.
\end{align*}
Therefore
\[\begin{array}{l}
\ds{\mathbb{E}\sup_{t \in\,[0,T]}\,\le\Vert  \int_0^t A^\la(z_\la(s))\,ds-\int_0^t A(z(s))\,ds\r\Vert ^2_{\mathcal{H}_{-1}}}\\[15pt]
\ds{\leq c_T\int_0^T\le[\la\,( \mathbb{E}\Vert z_\la(s)\Vert ^2_{\mathcal{H}_1}+1)+\mathbb{E}\,\Vert z_\la(s)-z(s)\Vert ^2_{\mathcal{H}}\r]\,ds.}
\end{array}\]
Thanks to \eqref{gs1000} and \eqref{gs6}, this 
implies 
\begin{equation}
\label{gs2300}
\lim_{\la\to 0} \int_0^\cdot A^\la(z_\la(s))\,ds=\int_0^\cdot A(z(s))\,ds,\ \ \ \text{in}\ L^2(\Omega;C([0,T];\mathcal{H}_{-1})).
\end{equation}
Moreover, 
\[\begin{array}{ll}
\ds{\mathbb{E}\sup_{t \in\,[0,T]}\le\Vert\int_0^t \le[\Sigma(z_\la(s))-\Sigma(z(s))\r]dw^Q(s)\r\Vert^2_{\mathcal{H}}}  &  \ds{\leq c\int_0^T\Vert \Sigma(z_\la(s))-\Sigma(z(s))\Vert^2_{\mathcal{L}_2(H_Q,\mathcal{H})} ds}\\[15pt]
&\ds{\leq c\int_0^T \mathbb{E}\Vert z_\la(s)-z(s)\Vert^2_{\mathcal{H}} ds,}
\end{array}\]
so that
\[\lim_{\la\to 0} \int_0^\cdot \Sigma(z_\la(s))\,dw^Q(s)=\int_0^\cdot \Sigma(z(s))\,dw^Q,\ \ \ \text{in}\ L^2(\Omega;C([0,T];\mathcal{H})).\]
This, together with \eqref{gs2300}, allows us to conclude that for every $t \in\,[0,T]$ we can take the $L^2(\Omega,C([0,T];\mathcal{H}_{-1}))$-limit on both sides of \eqref{gs12}, as $\la$ goes to zero,  and we get
\[z(t)=(u_0,g(u_0)+v_0)+\int_0^t A(z(s))\,ds+\int_0^t\Sigma(z(s))dw^Q(t).\]
Moreover, by proceeding as for $z_\la$, we have that $z \in\,L^2(\Omega;L^\infty(0,T;\mathcal{H}_1))$.

Now, in order to prove the continuity of trajectories in $\mathcal{H}_1$, we denote by $\theta(t)$ the solution of the  problem
\[d\theta(t)=L \theta(t)\,dt+\Sigma(z(t))dw^Q(t),\ \ \ \ \ \theta(0)=(u_0,g(u_0)+v_0),\]
where 
$L(\theta_1,\theta_2)=(\theta_2,\Delta \theta_1)$. Due to Assumption \ref{Assumption1} and the fact that $\theta(0) \in\,\mathcal{H}_1$, we have that $\theta$ belongs to $L^2(\Omega;C([0,T];\mathcal{H}_1))$ (for a proof see \cite[Theorem 5.11]{DPZ}).
Now, if we define $\hat{z}(t):=z(t)-\theta(t)$, for $t \in\,[0,T]$, and $M(z_1,z_2)=(-g(z_1),f(z_1))$, for $(z_1,z_2) \in\,\mathcal{H}_1$, we have
\[\frac{d}{dt}\hat{z}(t)=L\hat{z}(t)+M(z(t)),\ \ \ \ \si(0)=0,\]
so that, by applying the variation of constants formula, we obtain
\[\hat{z}(t)=\int_0^t S(t-s) M(z(s))\,ds,\]
where $S(t)$ is the group generated by the operator $L$, endowed with Dirichlet boundary conditions, in $\mathcal{H}_1$. Since
\[\Vert M(z)\Vert_{\mathcal{H}_1}\leq c\ \le(\Vert z\Vert_{\mathcal{H}_1}+1\r),\]
and $z \in\,L^2(\Omega;L^\infty(0,T;\mathcal{H}_1))$, we have that $\hat{z} \in\,L^2(\Omega;C([0,T];\mathcal{H}_1))$. Then, as $z=\hat{z}+\theta$, we conclude that $z \in\,L^2(\Omega;C([0,T];\mathcal{H}_1))$.

{\em Step 5.} Uniqueness holds.

{\em Proof of Step 5.}
Let $z_1$  and $z_2$ be two solutions of equation \eqref{gs52}. If we define $\varrho(t):=z_1(t)-z_2(t)$,  we have 
\[\begin{array}{ll}
\ds{\Vert \varrho(t)\Vert ^2_{\mathcal{H}}}  &  \ds{=\int_0^t\le[\langle A(z_1(s))-A(z_2(s)),\varrho(t)\rangle_{\mathcal{H}}+\Vert \sigma(z_1(s))-\sigma(z_2(s))\Vert^2_{\mathcal{L}_2(H_Q,\mathcal{H})}\r]\,ds }\\[10pt]
&\ds{\quad +2\int_0^t \langle \varrho(s),\le[\Sigma(z_1(s))-\Sigma(z_2(s))\r]dw^Q\rangle_{\mathcal{H}}.}
\end{array}\]
Hence, by Lemma \ref{lem: Boperator} and Assumption \ref{Assumption1} we have
\[\mathbb{E}\,\Vert \varrho(t)\Vert ^2_{\mathcal{H}}\leq c\,\int_0^t \mathbb{E}\,\Vert \varrho(s)\Vert ^2_{\mathcal{H}}\,ds,\]
and this implies that $z_1=z_2$.
\end{proof}

\section{Energy estimates}
\label{sec5}

In the previous section we have proved that  for any  $\mu>0$ and any $T>0$ there is a unique solution $(u_\mu,\partial_t u_\mu) \in\,L^2(\Omega;C([0,T],\mathcal{H}_1))$ to  system \eqref{SPDE2}.
In this section, we prove some  bounds for  $(u_\mu,\partial_t u_\mu)$, which are uniform with respect to $ \mu$. 

As we  have already done in the proof of Theorem \ref{teo4.2}, if we apply the It\^o formula to equation   \eqref{SPDE2} and the function 
\[
K_\mu(u,v)=\Vert u\Vert ^2_{H^1}+\mu \Vert v\Vert _H^2,
\]
we have  
\begin{align*}
\frac12\, dK_\mu(u_\mu,\partial_t u_\mu) &=\Large[\langle (-\Delta) u_\mu(t),\partial_t u_\mu(t)\rangle_H+\langle \Delta u_\mu(t),\partial_t u_\mu(t)\rangle_H-\langle \gamma(u_\mu(t))\partial_t u_\mu(t),\partial_t u_\mu(t)\rangle_H\\[10pt]
    &\quad +\langle f(u_\mu(t),\partial_t u_\mu(t)\rangle_H+\frac 1{2\mu} \Vert \si(u_\mu(t))\Vert^2_{\mathcal{L}_2(H_Q,H)}\Large]\,dt\\[10pt]
    &\quad +\langle \partial_t u_\mu(t),\si(u_\mu(s))dw^Q(t)\rangle_H.
\end{align*}
This implies
\begin{equation}\label{de1bis}
    \begin{aligned}
    &\frac 12 d \le[\Vert  u_\mu(t)\Vert _{H^1}^2+\mu\, \Vert  \partial_t u_\mu(t)\Vert _H^2 \r]\\[10pt]
    &=\le(\langle f(u_\mu(t),\partial_t u_\mu(t)\rangle_H-\langle \gamma(u_\mu(t))\partial_t u_\mu(t),\partial_t u_\mu(t)\rangle_H +\frac{1}{2\mu}\Vert \si(u_\mu(t))\Vert^2_{\mathcal{L}_2(H_Q,H)}\r)dt\\[10pt]
    &\quad +\langle \partial_t u_\mu(t),\si(u_\mu(t))dw^Q(t)\rangle_H\\[10pt]
    &\leq \le(c\,\le(\Vert  u_\mu(t)\Vert _{H}^2 +1\r)-\frac {\gamma_0}2 \Vert  \partial_t u_\mu(t)\Vert _H^2+\frac{\si_\infty^2}{2\mu}\r)dt+\langle \partial_t u_\mu(t),\si(u_\mu(t))dw^Q(t)\rangle_H.
\end{aligned}
\end{equation}
In particular,
\begin{equation}\label{de1}
    \begin{aligned}
    &\frac 12 \frac d{dt} \le[\mathbb{E}\Vert  u_\mu(t)\Vert _{H^1}^2+\mu\, \mathbb{E}\Vert  \partial_t u_\mu(t)\Vert _H^2 \r]\\[10pt]
&\leq -\frac{\gamma_0}{2\mu}\le[\mathbb{E}\Vert  u_\mu(t)\Vert _{H^1}^2+\mu\,\mathbb{E} \Vert  \partial_t u_\mu(t)\Vert _H^2 -\bar{c}\r]+c\le(\frac 1\mu \mathbb{E}\Vert  u_\mu(t)\Vert _{H^1}^2+1\r),
    \end{aligned}
\end{equation}
where $\bar{c} =\si_\infty^2 /\gamma_0$. Moreover, we have  
\begin{equation}\label{de2}
\frac{d}{dt}\, \mu \Vert  u_\mu(t)\Vert _{H}^2=2\mu \langle u_\mu(t),\partial_t u_\mu(t)\rangle_H,
\end{equation}
and 
\begin{equation}\label{de3}
\begin{aligned}
 \mu\,   d \langle u_\mu(t),\partial_t u_\mu(t)\rangle_H &=\le[\mu\,  \Vert  \partial_t u_\mu(t)\Vert _H^2-\Vert  u_\mu(t)\Vert _{H^1}^2 -\langle u_\mu(t),\gamma(u_\mu(t))\partial_t u_\mu(t)\rangle_H\r.\\[10pt]
    &\quad \le.+\langle f(u_\mu(t)),u_\mu(t)\rangle_H\r]\,dt+\langle u_\mu(t),\si(u_\mu(t))dw^Q(t)\rangle_H.
\end{aligned}
\end{equation}
\begin{Lemma}\label{lem:tightness}
Under Assumptions \ref{Assumption1}, \ref{Assumption2} and \ref{Assumption3}, for every $T>0$ and $(u_0, v_0) \in\,\mathcal{H}_1$ there exists some constant $c_T=c_T(\Vert u_0\Vert _{H^1}, \Vert v_0\Vert _H)$, independent of $\mu$, such that 
\begin{equation}
\label{gs24}
\begin{aligned}
&\mathbb{E}\sup_{r \in\,[0,t]}\Vert  u_\mu(r) \Vert _H^2 +\int_0^t \mathbb{E}\Vert  u_\mu(s)\Vert _{H^1}^2 ds\\[10pt]
& \leq c_T \le(1+  \mu \int_0^t \mathbb{E}\Vert  \partial_t u_\mu(s)\Vert _{H}^2 ds +\mu^2\,  \mathbb{E}\sup_{r \in\,[0,t]}\Vert  \partial_t u_\mu(r)\Vert _{H}^2\r), 
\end{aligned}
\end{equation}
for every $ \mu \in\,(0,1)$
and $t \in\,[0,T]$.
\end{Lemma}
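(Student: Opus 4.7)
\textbf{Proof plan for Lemma \ref{lem:tightness}.}

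The plan is to use identity \eqref{de3} as the principal tool, since (unlike the energy identity \eqref{de1bis}, which contains a spurious $1/\mu$ factor in front of $\Vert\sigma(u_\mu)\Vert^2$) this is the only one of the three identities that produces the desired $\int_0^t \Vert u_\mu(s)\Vert_{H^1}^2\,ds$ when integrated. Integrating \eqref{de3} over $[0,r]$ for $r\in[0,t]$, and rearranging, gives
\[
\int_0^r \Vert u_\mu(s)\Vert_{H^1}^2\,ds+\mu\langle u_\mu(r),\partial_t u_\mu(r)\rangle_H+\int_0^r \langle u_\mu(s),\gamma(u_\mu(s))\partial_t u_\mu(s)\rangle_H\,ds=R_\mu(r),
\]
where $R_\mu(r)$ collects $\mu\langle u_0,v_0\rangle_H$, $\mu\int_0^r \Vert\partial_t u_\mu\Vert_H^2\,ds$, $\int_0^r \langle f(u_\mu),u_\mu\rangle_H\,ds$, and the local martingale $\tilde M_r:=\int_0^r\langle u_\mu(s),\sigma(u_\mu(s))dw^Q(s)\rangle_H$.

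The key manipulation is the handling of the third term on the left. Since $\gamma(u_\mu)\partial_t u_\mu=\partial_t g(u_\mu)$ (see \eqref{gammag}), one may introduce the primitive $G(r):=\int_0^r s\gamma(s)\,ds$, so that $G'(r)=r\gamma(r)$, and Assumption \ref{Assumption2} yields $\gamma_0 r^2/2\le G(r)\le\gamma_1 r^2/2$. Then the chain rule in space-time gives
\[
\int_0^r \langle u_\mu(s),\gamma(u_\mu(s))\partial_t u_\mu(s)\rangle_H\,ds=\int_{\mathcal{O}}G(u_\mu(r,x))\,dx-\int_{\mathcal{O}}G(u_0(x))\,dx\ge \tfrac{\gamma_0}{2}\Vert u_\mu(r)\Vert_H^2-\tfrac{\gamma_1}{2}\Vert u_0\Vert_H^2.
\]
Combined with the Young bound $\mu\,|\langle u_\mu(r),\partial_t u_\mu(r)\rangle_H|\le (\gamma_0/8)\Vert u_\mu(r)\Vert_H^2+(2\mu^2/\gamma_0)\Vert\partial_t u_\mu(r)\Vert_H^2$, this produces $\tfrac{3\gamma_0}{8}\Vert u_\mu(r)\Vert_H^2+\int_0^r\Vert u_\mu(s)\Vert_{H^1}^2\,ds$ on the left, against the crucial $\mu^2$-weighted pointwise term and the $\mu$-weighted time-integral of $\Vert\partial_t u_\mu\Vert_H^2$ on the right, exactly as required by \eqref{gs24}.

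To close the inequality one has to control $\int_0^r\langle f(u_\mu),u_\mu\rangle_H\,ds$. By Assumption \ref{Assumption3} and H\"older plus Young (exploiting $\delta<1$), one has $\langle f(u_\mu),u_\mu\rangle_H\le (\lambda+\varepsilon)\Vert u_\mu\Vert_H^2+C_\varepsilon$, and by Poincar\'e \eqref{Poincare} this is dominated by $(\lambda+\varepsilon)\alpha_1^{-1}\Vert u_\mu\Vert_{H^1}^2+C_\varepsilon$. Choosing $\varepsilon$ small enough that $(\lambda+\varepsilon)/\alpha_1<1$ (using $\lambda<\alpha_1$), the term is absorbed into the $H^1$-integral on the left with a strictly positive coefficient $c_1>0$. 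Next, one takes $\sup_{r\in[0,t]}$ on both sides: since $\int_0^r\Vert u_\mu\Vert_{H^1}^2\,ds$ is non-decreasing in $r$, the elementary observation $2\sup_r[A(r)+B(r)]\ge \sup_r A(r)+B(t)$ for non-negative $A,B$ with $B$ increasing gives back a term of the form $\tfrac{\gamma_0}{16}\sup_{r\in[0,t]}\Vert u_\mu(r)\Vert_H^2+\tfrac{c_1}{2}\int_0^t\Vert u_\mu(s)\Vert_{H^1}^2\,ds$ on the left. Finally the martingale term is handled by Burkholder--Davis--Gundy, using the boundedness of $\sigma$ from Assumption \ref{Assumption1}:
\[
\mathbb{E}\sup_{r\in[0,t]}|\tilde M_r|\le C\,\sigma_\infty\sqrt{T}\,\bigl(\mathbb{E}\sup_{r\in[0,t]}\Vert u_\mu(r)\Vert_H^2\bigr)^{1/2}\le \tfrac{\gamma_0}{32}\,\mathbb{E}\sup_{r\in[0,t]}\Vert u_\mu(r)\Vert_H^2+C_T,
\]
by Young, and the first summand is absorbed into the left-hand side. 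Taking expectations yields \eqref{gs24} with a constant $c_T$ depending only on $T$ and $(\Vert u_0\Vert_{H^1},\Vert v_0\Vert_H)$ (through the initial energy $\int_{\mathcal{O}}G(u_0)\,dx$ and $\mu\langle u_0,v_0\rangle_H$, both dominated for $\mu\in(0,1)$). The main delicate point is the simultaneous absorption in step three: one must balance the Young constant in front of $\Vert u_\mu(r)\Vert_H^2$ (which must stay below $\gamma_0/2$ to leave room for the pointwise control) with the choice of $\varepsilon$ that guarantees $(\lambda+\varepsilon)/\alpha_1<1$ and with the BDG absorption constant.
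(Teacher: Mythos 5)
Your proposal is correct and follows essentially the same route as the paper: the paper's proof also starts from \eqref{de3}, introduces the primitive $\Gamma(r)=\int_0^r x\gamma(x)\,dx$ and $\Lambda(u)=\int_{\mathcal O}\Gamma(u)\,dx$ (your $G$) with the same two-sided bounds from Assumption \ref{Assumption2}, uses Young on $\mu\langle u_\mu(t),\partial_t u_\mu(t)\rangle_H$, and absorbs the $f$-term via \eqref{gs25} and Poincar\'e exactly as you do. The only cosmetic difference is the last step: you absorb the BDG bound for the stochastic integral into $\mathbb{E}\sup_r\Vert u_\mu(r)\Vert_H^2$ by Young with a $\sqrt{T}$ factor, while the paper leaves a term $\tfrac12\int_0^t\mathbb{E}\Vert u_\mu(s)\Vert_H^2\,ds$ and concludes from there; both closings are routine and equivalent.
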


\begin{proof}
We define 
\[
\Gamma(r)=\int_0^r x\gamma(x) dx,\ \ \ \ r \in\,\mathbb{R},
\]
and \[\Lambda(u)=\int_{\mathcal{O}}\Gamma(u(x))\,dx,\ \ \ \ u \in\,H.\]
It is easy to see that \eqref{nonlinearity assumption} implies 
\begin{equation}
0\leq \frac{\gamma_0}{2}r^2\leq \Gamma(r)\leq \frac{\gamma_1}{2}r^2,\ \ \ \ \ r \in\,\mathbb{R},
\end{equation}
so that, for every $u \in\,H$
\begin{equation}\label{inequality8}
0\leq \frac{\gamma_0}{2} \Vert  u \Vert _H^2 \leq \Lambda(u) \leq \frac{\gamma_1}{2} \Vert  u \Vert _H^2.
\end{equation}
Moreover, if $v(t)=\partial_t u(t) $, we have
\begin{equation}
\frac{d}{dt}\,\Lambda(u(t))   = \int_{\mathcal{O}}\gamma(u(t,x))u(t,x)v(t,x)\,dx=\langle u(t),\gamma(u(t))\partial_t u(t)\rangle_H.
\end{equation}
Therefore, thanks to \eqref{de3} and \eqref{inequality8}, this gives
\begin{align*}
    \frac{\gamma_0}{2} \Vert  u_\mu(t) \Vert _H^2 &\leq \Lambda(u_\mu(t))\\[10pt]
    &=  \Lambda(u_0) -\mu\,  \langle u_\mu(t) , \partial_t u_\mu(t)\rangle_H+ \mu\, \langle u_0,v_0\rangle_H + \mu \int_0^t \Vert  \partial_t u_\mu(s)\Vert _{H}^2 ds\\[10pt]
    & \quad  -\int_{0}^t \Vert  u_\mu(s)\Vert _{H^1}^2 ds+\int_0^t   \langle f(u_\mu(s)),u_\mu(s) \rangle_H ds+\int_0^t\langle u_\mu(s),\si(u_\mu(s))dw^Q(s)\rangle_H.
\end{align*}
In particular, for every $\mu \in\,(0,1)$, we have
\begin{align*}
    \frac{\gamma_0}{4} \Vert  u_\mu(t) \Vert _H^2 &\leq c+ c\mu^2\Vert \partial_t u_\mu(t)\Vert _H^2+  \mu \int_0^t \Vert  \partial_t u_\mu(s)\Vert _{H}^2 ds \\[10pt]
    &\quad -\int_{0}^t\Vert  u_\mu(s)\Vert _{H^1}^2 ds+\int_0^t   \langle f(u_\mu(s)) ,  u_\mu(s)\rangle_H ds+\int_0^t\langle u_\mu(s),\si(u_\mu(s))dw^Q(s)\rangle_H.
\end{align*}
Now, by \eqref{Poincare} and \eqref{gs25}, we have
\begin{align*}
\langle f(u_\mu(s)) ,  u_\mu(s)\rangle_H &\leq \la\,\Vert u_\mu(s)\Vert _H^2+c\le(1+\Vert u_\mu(s)\Vert ^{1+\d}_{L^{1+\d}(\mathcal{O})}\r)\\[10pt]
    &\leq \le[\la/\a_1+\le(1-\la/\a_1\r)/2\r]\Vert u_\mu(s)\Vert _{H^1}^2 +c,
\end{align*}
so that
\begin{align*}
  &  \frac{\gamma_0}{4}\Vert  u_\mu(t) \Vert _H^2 +\frac 12\le(1-\la/\a_1\r)\,\int_{0}^t \Vert  u_\mu(s)\Vert _{H^1}^2 ds\\[10pt]
&\leq c_T+c\mu^2 \Vert \partial_t u_\mu(t)\Vert _H^2+  \mu \int_0^t  \Vert  \partial_t u_\mu(s)\Vert _{H}^2 ds+\int_0^t\langle u_\mu(s),\si(u_\mu(s))dw^Q(s)\rangle_H.
\end{align*}
This implies
\begin{align*}
  &  \mathbb{E}\sup_{r \in\,[0,t]}\Vert  u_\mu(r) \Vert _H^2 +\int_{0}^t \mathbb{E}\Vert  u_\mu(s)\Vert _{H^1}^2 ds\\[10pt]
&\leq c_T+c\mu^2 \mathbb{E}\sup_{r \in\,[0,t]}\Vert \partial_t u_\mu(r)\Vert _H^2+ c \mu \int_0^t  \mathbb{E}\Vert  \partial_t u_\mu(s)\Vert _{H}^2 ds\\[10pt]
&\quad +c\,\mathbb{E}\sup_{r \in\,[0,t]}\le|\int_0^r\langle u_\mu(s),\si(u_\mu(s))dw^Q(s)\rangle_H\r|\\[10pt]
&\leq c_T+c\mu^2 \mathbb{E}\sup_{r \in\,[0,t]}\Vert \partial_t u_\mu(r)\Vert _H^2+ c \mu \int_0^t  \mathbb{E}\Vert  \partial_t u_\mu(s)\Vert _{H}^2 ds+\frac 12 \int_0^t \mathbb{E}\Vert u_\mu(s)\Vert_H^2\,ds,
\end{align*}
and \eqref{gs24} follows.

\end{proof}

\begin{Proposition}
\label{lemma100}
Under Assumptions \ref{Assumption1}, \ref{Assumption2} and \ref{Assumption3}, for every $T>0$ and $(u_0, v_0) \in\,\mathcal{H}_1$ there exist some constants $c_T$ and $\mu_T>0$ depending on $\Vert u_0\Vert _{H^1}, \Vert v_0\Vert _H$ such that 
\begin{equation}
\label{gs24tris}
\begin{aligned}
& \mathbb{E}\sup_{r \in\,[0,T]}\Vert  u_\mu(r) \Vert _{H^1}^2 +\mu\, \mathbb{E}\sup_{r \in\,[0,T]}\Vert   \partial_t u_\mu(r) \Vert _{H}^2+ \int_0^T \mathbb{E}\Vert  \partial_t u_\mu(s)\Vert _{H}^2 ds\leq \frac{c_T}\mu,\end{aligned}
\end{equation}
for every $ \mu \in\,(0,\mu_T)$.
\end{Proposition}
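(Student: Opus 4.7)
The strategy is to combine the It\^o identity \eqref{de1bis} with the already established bound in Lemma \ref{lem:tightness} and then close the estimate by absorbing the velocity terms on the right-hand side into the left-hand side, provided $\mu$ is taken sufficiently small. Starting from \eqref{de1bis}, I would integrate over $[0,t]$, take the supremum over $t \in [0,T]$, and pass to expectation. The stochastic integral $\int_0^t \langle \partial_t u_\mu,\sigma(u_\mu)dw^Q\rangle_H$ is controlled with the Burkholder--Davis--Gundy inequality together with the uniform bound on $\Vert\sigma(u)\Vert_{\mathcal{L}_2(H_Q,H)}$ from Assumption \ref{Assumption1}, giving
\[
\mathbb{E}\sup_{t \leq T}\left|\int_0^t \langle \partial_t u_\mu(s),\sigma(u_\mu(s))dw^Q(s)\rangle_H\right| \leq c\,\sigma_\infty\,\mathbb{E}\left(\int_0^T \Vert \partial_t u_\mu(s)\Vert_H^2\,ds\right)^{1/2},
\]
and a Young inequality then lets one absorb a fraction of $\int_0^T \mathbb{E}\Vert\partial_t u_\mu\Vert_H^2\,ds$ into the dissipative term $-\tfrac{\gamma_0}{2}\Vert\partial_t u_\mu\Vert_H^2$ appearing in \eqref{de1bis}. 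After this one obtains
\[
\mathbb{E}\sup_{t\leq T}\Vert u_\mu(t)\Vert_{H^1}^2+\mu\,\mathbb{E}\sup_{t\leq T}\Vert\partial_t u_\mu(t)\Vert_H^2+\int_0^T \mathbb{E}\Vert\partial_t u_\mu(s)\Vert_H^2\,ds \leq \frac{C_T}{\mu}+C_T\int_0^T \mathbb{E}\Vert u_\mu(s)\Vert_H^2\,ds,
\]
where the $C_T/\mu$ term originates solely from the deterministic It\^o correction $\sigma_\infty^2/(2\mu)$ in \eqref{de1bis}.

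Next, I would invoke Lemma \ref{lem:tightness}, which after the trivial bound $\int_0^T \mathbb{E}\Vert u_\mu\Vert_H^2\,ds \leq T\,\mathbb{E}\sup_{r\leq T}\Vert u_\mu(r)\Vert_H^2$ yields
\[
\int_0^T \mathbb{E}\Vert u_\mu(s)\Vert_H^2\,ds \leq C_T\left(1+\mu\int_0^T \mathbb{E}\Vert\partial_t u_\mu(s)\Vert_H^2\,ds+\mu^2\,\mathbb{E}\sup_{r\leq T}\Vert\partial_t u_\mu(r)\Vert_H^2\right).
\]
Substituting this back, and writing $A=\mathbb{E}\sup_{t\leq T}\Vert u_\mu\Vert_{H^1}^2$, $B=\mathbb{E}\sup_{t\leq T}\Vert\partial_t u_\mu\Vert_H^2$, $C=\int_0^T \mathbb{E}\Vert\partial_t u_\mu\Vert_H^2\,ds$, produces the schematic inequality
\[
A+\mu B+C \leq \frac{C_T}{\mu}+C_T+C_T\,\mu\,C+C_T\,\mu^2\,B.
\]
Choosing $\mu_T$ so that $C_T\,\mu_T \leq 1/2$, the terms $C_T\,\mu\,C$ and $C_T\,\mu^2\,B=(C_T\,\mu)(\mu B)$ on the right can be absorbed into the matching $C$ and $\mu B$ terms on the left, yielding $A+\mu B+C \leq c_T/\mu$, which is exactly \eqref{gs24tris}.

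The main obstacle of the argument is the mutual dependence between the two basic estimates: the It\^o identity bounds the kinetic energy only in terms of $\int_0^T\mathbb{E}\Vert u_\mu\Vert_H^2\,ds$, whereas Lemma \ref{lem:tightness} bounds $\mathbb{E}\sup\Vert u_\mu\Vert_H^2$ only in terms of the kinetic energy. Taken separately these two facts are circular; the decisive observation is that the $\mu$-powers produced by Lemma \ref{lem:tightness} (that is, $\mu$ in front of $C$ and $\mu^2$ in front of $B$) are precisely one order higher than the $\mu$-weights $1$ and $\mu$ carried by $C$ and $B$ on the left-hand side, which is what makes the absorption step succeed once $\mu$ is taken small enough. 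Note that the $\sigma_\infty^2/(2\mu)$ contribution from the diffusion is the sole source of $\mu$-blow-up, which forces the final bound to take the form $c_T/\mu$ and explains why this is the best one can hope for from a direct energy argument of this type (the sharper $\mu^{3/2}$-bound mentioned in the introduction requires the later contradiction argument).
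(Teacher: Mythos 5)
Your proposal is correct and follows essentially the same route as the paper: integrate \eqref{de1bis}, take the supremum in time, control the martingale term via Burkholder--Davis--Gundy and Young so that a fraction of $\int_0^T\mathbb{E}\Vert\partial_t u_\mu\Vert_H^2\,ds$ is absorbed by the dissipation, then feed in Lemma \ref{lem:tightness} and absorb the $\mu$-weighted velocity terms for $\mu$ small. The only difference is that the paper leaves the final absorption step implicit ("conclude by using \eqref{gs24}"), whereas you spell it out with the schematic $A+\mu B+C$ inequality; the content is the same.
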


\begin{proof}
Due to \eqref{de1bis}, for every $\mu \in\,(0,1)$ we have
\begin{align*}
&\Vert  u_\mu(t) \Vert _{H^1}^2 +\mu\, \Vert   \partial_t u_\mu(t) \Vert _{H}^2+\frac{\gamma_0}2\int_0^t \Vert \partial_t u_\mu(s) \Vert _{H}^2\,ds\\[10pt]
&\leq \frac{c_T}\mu+c\,\int_0^t \Vert u_\mu(s) \Vert _{H}^2\,ds+\int_0^t\langle \partial_t u_\mu(s),\si(u_\mu(s))dw^Q(s)\rangle_H.
\end{align*}
This implies 
\begin{align*}
&\, \mathbb{E}\sup_{r \in\,[0,t]}\Vert  u_\mu(r) \Vert _{H^1}^2 +\mu\,\mathbb{E} \sup_{r \in\,[0,t]}\Vert   \partial_t u_\mu(r) \Vert _{H}^2+\frac{\gamma_0}2\int_0^t \mathbb{E}\Vert \partial_t u_\mu(s) \Vert _{H}^2\,ds\\[10pt]
&\leq \frac{c_T}\mu+c\,\int_0^t \mathbb{E}\Vert u_\mu(s) \Vert _{H}^2\,ds+\,\mathbb{E}\sup_{r \in\,[0,t]}\le|\int_0^r\langle \partial_t u_\mu(s),\si(u_\mu(s))dw^Q(s)\rangle_H\r|\\[10pt]
&\leq \frac{c_T}\mu+c\,\int_0^t \mathbb{E}\Vert u_\mu(s) \Vert _{H}^2\,ds+\frac{\gamma_0}4\,\int_0^t\mathbb{E}\Vert \partial_t u_\mu(s) \Vert _{H}^2\,ds.
\end{align*}
Therefore, we can conclude the proof by using  \eqref{gs24}.
\end{proof}

\begin{Remark}
{\em Combining \eqref{gs24} and \eqref{gs24tris}, we obtain that for every $T>0$ there exist $c_T, \mu_T>0$ such that
\begin{equation}
\label{gs24quater}
\mathbb{E}\sup_{r \in\,[0,T]}\Vert  u_\mu(r) \Vert _H^2 +\int_0^T \mathbb{E}\Vert  u_\mu(s)\Vert _{H^1}^2 ds\leq c_T
\end{equation}
for all $\mu \in\,(0,\mu_T)$.}
\end{Remark}

In fact, we can prove a better bound for the $L^2(\Omega;L^\infty(0,T;\mathcal{H}_1))$-norm of $(u_\mu,\sqrt{\mu}\,\partial_t u_\mu)$ than the one in \eqref{gs24tris}.

\begin{Proposition}\label{Energy estimate}
Under Assumptions \ref{Assumption1}, \ref{Assumption2} and \ref{Assumption3}, given any  $T>0$, there exist  $c_T, \mu_T>0$ such that  for all $\mu \in\,(0,\mu_T)$
\begin{equation}
\label{[2]}
\sqrt{\mu}\ \mathbb{E}\sup_{t\in[0,T]}\le(\Vert  u_\mu(t)\Vert _{H^1}^2+\mu\, \Vert  \partial_t u_\mu(t) \Vert _H^2\r) \leq c_T.
\end{equation}

\end{Proposition}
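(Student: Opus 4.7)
The target is a gain of a factor $\sqrt{\mu}$ over the bound obtained in Proposition \ref{lemma100}. My starting point is the integrated form of the It\^o energy inequality \eqref{de1bis}. Set $\phi_\mu(t):=\Vert u_\mu(t)\Vert_{H^1}^2+\mu\Vert \partial_t u_\mu(t)\Vert_H^2$ and
\[
M_\mu(t):=\int_0^t\langle \partial_t u_\mu(s),\sigma(u_\mu(s))\,dw^Q(s)\rangle_H.
\]
Integrating \eqref{de1bis} and using \eqref{gs24quater} to absorb the terms involving $\Vert u_\mu\Vert_H^2$ into a common factor with finite expectation, one obtains, almost surely,
\[
\phi_\mu(t)+\gamma_0\int_0^t\Vert \partial_t u_\mu(s)\Vert_H^2\,ds \leq \kappa_\mu(t)+\frac{\sigma_\infty^2\, t}{\mu}+2\,M_\mu(t),
\]
with $\mathbb{E}\sup_{[0,T]}\kappa_\mu\leq c_T$ independent of $\mu\in(0,1)$. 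By BDG, the Lipschitz/bounded hypothesis on $\sigma$, and \eqref{gs24tris},
\[
\mathbb{E}\sup_{[0,T]}|M_\mu(t)|\leq c\,\sigma_\infty\,\Bigl(\mathbb{E}\int_0^T\Vert \partial_t u_\mu\Vert_H^2\,ds\Bigr)^{\!1/2}\leq \frac{c_T}{\sqrt{\mu}},
\]
so the stochastic part contributes exactly at the desired scale.

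A naive supremum-then-expectation estimate yields only $\mathbb{E}\sup\phi_\mu\leq c_T/\mu$, the bound of Proposition \ref{lemma100}, because the deterministic drift $\sigma_\infty^2 T/\mu$ overwhelms everything else. As hinted in the introduction, I would therefore argue by contradiction. Assume the conclusion fails: along some sequence $\mu_n\downarrow 0$ we have $a_n:=\sqrt{\mu_n}\,\mathbb{E}\sup_{[0,T]}\phi_{\mu_n}\to\infty$. From \eqref{gs24tris}, $\mu_n a_n=\mu_n^{3/2}\mathbb{E}\sup\phi_{\mu_n}$ stays bounded by $c_T$, so, up to a subsequence, $\sqrt{\mu_n}\,a_n\to L\in(0,c_T]$.

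Normalize $\Theta_n(t):=\phi_{\mu_n}(t)\bigl/\mathbb{E}\sup_{[0,T]}\phi_{\mu_n}$, so that $\mathbb{E}\sup_{[0,T]}\Theta_n=1$, and divide the above energy inequality through by $\mathbb{E}\sup\phi_{\mu_n}$. Taking supremum and expectation, the contributions from $\kappa_{\mu_n}$ and from $2M_{\mu_n}$ vanish in the limit (they are $O(1)$ and $O(1/\sqrt{\mu_n})$ respectively, divided by the blow-up $\mathbb{E}\sup\phi_{\mu_n}\geq a_n/\sqrt{\mu_n}\to\infty$), whereas the deterministic drift contributes $\sigma_\infty^2 T/(\sqrt{\mu_n}a_n)\to \sigma_\infty^2 T/L$. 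Coupled with the additional non-negative term $\gamma_0\,\mathbb{E}\int_0^T\Vert \partial_t u_\mu\Vert_H^2\,ds/\mathbb{E}\sup\phi_{\mu_n}$ on the left, the limit identity forces a relation that, compared with the equipartition-type bound $\gamma_0\,\mathbb{E}\int_0^T\Vert \partial_t u_{\mu_n}\Vert_H^2\,ds\sim \sigma_\infty^2 T/\mu_n$ extracted from applying It\^o to $\mu\Vert \partial_t u_\mu\Vert_H^2$, leaves no room for $\mathbb{E}\sup\Theta_n=1$, contradicting the normalization.

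\textbf{Main obstacle.} The delicate point is to carry out the cancellation between the deterministic forcing $\sigma_\infty^2 t/\mu$ and the dissipative term $\gamma_0\int_0^t\Vert \partial_t u_\mu\Vert_H^2\,ds$ \emph{inside the supremum}, not merely in expectation: elementary bounds on these two terms separately are each $O(1/\mu)$, so their difference has to be controlled via a careful fluctuation estimate obtained by inserting the It\^o expansion of $\mu\Vert \partial_t u_\mu(t)\Vert_H^2$ before taking the supremum. This is where the contradiction argument buys the missing $\sqrt{\mu}$, yielding the stated bound for all $\mu\in(0,\mu_T)$ with $\mu_T$ sufficiently small.
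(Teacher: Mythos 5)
Your setup (the integrated form of \eqref{de1bis}, the bound $\mathbb{E}\sup_t|M_\mu(t)|\leq c_T/\sqrt{\mu}$ via BDG and \eqref{gs24tris}, and an argument by contradiction along a sequence $\mu_n\downarrow 0$ with $a_n:=\sqrt{\mu_n}\,\mathbb{E}\sup_t\phi_{\mu_n}\to\infty$) matches the paper's strategy, but the step that is supposed to produce the contradiction is missing, and the sketch you give in its place does not work. First, the claim that $\sqrt{\mu_n}a_n=\mu_n\,\mathbb{E}\sup_t\phi_{\mu_n}$ converges (along a subsequence) to some $L\in(0,c_T]$ is unjustified: Proposition \ref{lemma100} only gives an upper bound $c_T/\mu$, so $\mu_n\,\mathbb{E}\sup_t\phi_{\mu_n}$ may well tend to $0$ (e.g.\ if $\mathbb{E}\sup_t\phi_{\mu_n}\sim\mu_n^{-3/4}$, which is compatible with your contradiction hypothesis). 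Second, even granting $L>0$, dividing the energy inequality by $\mathbb{E}\sup_t\phi_{\mu_n}$ and taking sup-then-expectation only yields $1\leq o(1)+\sigma_\infty^2 T/(\mu_n\,\mathbb{E}\sup_t\phi_{\mu_n})$, i.e.\ $\mu_n\,\mathbb{E}\sup_t\phi_{\mu_n}\lesssim \sigma_\infty^2T$ — a statement already implied by Proposition \ref{lemma100}, not a contradiction. The attempt to rescue this via an ``equipartition'' lower bound $\gamma_0\,\mathbb{E}\int_0^T\Vert\partial_t u_{\mu_n}\Vert_H^2\,ds\sim\sigma_\infty^2T/\mu_n$ relies on a lower bound that is neither proved in the paper nor available under Assumption \ref{Assumption1} (where $\sigma_\infty$ is only a sup bound for a state-dependent $\sigma$), and in any case, after taking the supremum in $t$ you cannot keep both $\sup_t\phi$ and the full dissipation integral additively on the left-hand side. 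You explicitly flag the pathwise cancellation ``inside the supremum'' as the main obstacle — but that cancellation is the entire proof, and it is not supplied.

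For comparison, the paper's mechanism is concrete and different from your normalization scheme: let $t_k$ be a random time where $L_k(t)=\phi_{\mu_k}(t)$ attains its supremum. The pathwise inequality $L_k(t_k)-L_k(s)\leq \lambda_T(U_k^*+1)+\sigma_\infty^2(t_k-s)/\mu_k+2M_k^*$ shows that $L_k$ cannot drop below half of (essentially) its maximum on a backward time window of length $\tfrac{\mu_k}{2\sigma_\infty^2}\theta_k$, where $\theta_k=L_k(t_k)-\lambda_T-\lambda_T(U_k^*+1)-2M_k^*$; hence the time integral $I_k=\int_0^TL_k(s)\,ds$ is bounded below by $c\,\mu_k\theta_k^2$ on $\{\theta_k>0\}$. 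Combining this with the a priori bound $\mathbb{E}I_k\leq c_T$ (from \eqref{gs24tris} and \eqref{gs24quater}) and Cauchy--Schwarz gives $\sqrt{\mu_k}\,\mathbb{E}\theta_k\leq c$, while the contradiction hypothesis together with $\mathbb{E}M_k^*\leq c_T/\sqrt{\mu_k}$ and $\mathbb{E}U_k^*\leq c_T$ forces $\sqrt{\mu_k}\,\mathbb{E}\theta_k\to\infty$. This localization around the maximizing random time, converting a sup bound into a time-integral bound at the cost of one factor of $\mu_k$, is exactly the missing ingredient in your argument; without it (or an equivalent fluctuation estimate) the proposal does not establish \eqref{[2]}.
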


\begin{proof} 
Assume \eqref{[2]} is not true. Then, we can find  a sequence  $(\mu_k)_{k \in\,\mathbb{N}}\subset (0,1)$ converging to $0$, as $k\rightarrow \infty$, such that 
\begin{equation}\label{inequality12}
    \lim_{k\rightarrow \infty} \sqrt{\mu_k}\ \mathbb{E} \sup_{t\in[0,T]} \left( \Vert  u_{\mu_k}(t)\Vert _{H^1}^2 +\mu_k \Vert  \partial_t u_{\mu_k}(t) \Vert _H^2\right)=\infty.
\end{equation}
In what follows, to simplify our notation we define 
\[L_k(t):=\Vert  u_{\mu_k}(t)\Vert _{H^1}^2+\mu_k \Vert  \partial_tu_{\mu_k}(t) \Vert _H^2,\ \ \ \ t \in\,[0,T].\]
By Theorem \ref{teo4.2}, all $L_k(t)$ are continuous in $t$, $\mathbb{P}$-a.s.. Therefore, for every $k \in\,\mathbb{N}$ there exist a random time $t_k \in\,[0,T]$ such that 
\[L_k(t_k)=\sup_{t\in[0,T]}L_k(t).\]
For any random time $s$ such that $\mathbb{P}(s\leq t_k)=1$, from \eqref{de1bis} we have 
\begin{equation}\label{gx002}
L_k(t_k)-L_k(s) \leq \int_s^{t_k} \le(c(\Vert u_{\mu_k}(\tau) \Vert_H^2 +1)+\frac{\sigma^2_\infty}{\mu_k} \r) d\tau +M_k(t_k)-M_k(s),    
\end{equation}
where 
\[M_k(t)=\int_0^t \langle \partial_t u_{\mu_k}(s),\si(u_{\mu_k}(s))dw^Q(s)\rangle_H.\]
If we define the random variables
\[M_k^* :=\sup_{t\in[0,T]} \vert M_k(t)\vert,\quad U_k^* := \sup_{t\in[0,T]} \Vert u_{\mu_k}(t)\Vert_H^2,\]
then by Proposition \ref{lemma100} and \eqref{gs24quater} we have 
\begin{equation}\label{gx003}
    \mathbb{E}(M_k^*)\leq c\le( \int_0^T \mathbb{E} \Vert \partial_t u_{\mu_k}(t)\Vert_H^2 dt\r)^{\frac{1}{2}}\leq \frac{c_T}{\sqrt{\mu_k}}, \quad \mathbb{E}(U_k^*)\leq c_T.
\end{equation}
Due to the definition of $M_k^*$ and $U_k^*$, there exists a constant $\la_T$  independent of $k$ such that $\Vert u_0\Vert_{H^1}^2+\Vert v_0\Vert_{H}^2\leq \la_T$ and 
\[L_k(t_k)-L_k(s) \leq \la_T(U_k^*+1) + \frac{\sigma^2_\infty(t_k -s)}{\mu_k} +2M_k^*.  \]
If we take $s=0$, then
\[t_k\geq \frac{\mu_k}{\sigma^2_\infty}(L_k(t_k)-\la_T-\la_T(U_k^*+1)-2M_k^*)=:\frac{\mu_k }{\sigma^2_\infty}\,\theta_k.\]
On the set $E_k:=\{\theta_k>0\}$ we consider $s\in[t_k-\frac{\mu_k}{2\sigma^2_\infty}\,\theta_k,t_k]$, for which we have
\[L_k(s)\geq  L_k(t_k) - \la_T(U_k^*+1) - \frac{\theta_k}{2} - 2M_k^* = \frac{1}{2}[(L_k(t_k)-\la_T(U_k^*+1) - 2M_k^*)+\la_T].\]
Finally, if we define
\[I_k := \int_{0}^{T} L_k(s)ds = \int_0^T \le(\Vert u_{\mu_k}(s)\Vert_{H^1}^2+ \mu_k \Vert \partial_t u_{\mu_k}(s)\Vert_H^2\r)\,ds , \]
we have
\begin{equation*}
    I_k \geq \int_{t_k-\frac{\mu_k}{2\sigma^2_\infty}\,\theta_k}^{t_k} L_k(s)ds 
    \geq \frac{\mu_k}{4\sigma_\infty^2}\le[(L_k(t_k)-\la_T(U_k^*+1) - 2M_k^*)^2-\la_T^2\r]
\end{equation*}
on $E_k$.
Thus, by taking expectation on both sides, we get 
\begin{equation}\label{gx007}
\begin{aligned}
    \mathbb{E}(I_k) &\geq \mathbb{E}(I_k \,;\,E_k)\geq \mathbb{E}\le(\frac{\mu_k}{4\sigma_\infty^2}(L_k(t_k)-\la_T(U_k^*+1) - 2M_k^*)^2 \,;\,E_k\r)-\frac{\mu_k \la_T^2}{4\sigma_\infty^2}. 
\end{aligned}
\end{equation}
By \eqref{inequality12} and \eqref{gx003}, we know 
\[\lim_{k\rightarrow \infty}\sqrt{\mu_k}\ \mathbb{E}\theta_k=\infty.\] Moreover, 
\begin{equation}\label{gx008}
    \mathbb{E}(\sqrt{\mu_k}\theta_k)\leq \mathbb{E}(\sqrt{\mu_k}\theta_k \,;\,E_k)\leq \mathbb{E}(\sqrt{\mu_k}(\theta_k+\la_T)\,;\,E_k)\leq \le[\mathbb{E}(\mu_k (\theta_k+\la_T)^2\,;\,E_k)\r]^{\frac{1}{2}}.
\end{equation}
Combine \eqref{gx007} and \eqref{gx008}, we have 
\begin{equation*}
    \mathbb{E}(I_k)\geq \frac{1}{4\si_\infty^2}\mathbb{E}(\mu_k (\theta_k+\la_T)^2\,;\,E_k)-\frac{\mu_k \la_T^2}{4\sigma_\infty^2}\geq \frac{1}{4\si_\infty^2}\le(\sqrt{\mu_k}\ \mathbb{E}\theta_k\r)^2-\frac{\mu_k \la_T^2}{4\sigma_\infty^2}.
\end{equation*}
This implies that $\lim_{k\to\infty} \mathbb{E}(I_k)=\infty$, which contradicts to \eqref{gs24tris} and \eqref{gs24quater}. Therefore, \eqref{[2]}  must be true and the proof is complete. 
\end{proof}

\section{Tightness}
\label{sec6}

For every $\mu>0$ and $T>0$, we shall  define  
\[
    \rho_\mu(t)=g\left(u_\mu(t)\right),\ \ \ \ t \in\,[0,T].
\]
In this section, we study the tightness of  the family of measures $(\mathcal{L}(\rho_{\mu_k}))_{k \in\,\mathbb{N}}$, for any sequence $(\mu_k)_{k \in\,\mathbb{N}}$ converging to zero. According to Assumption \ref{Assumption2} and the definition of $g$, we know that 
\[\vert g(r)\vert \leq \gamma_1\vert r\vert,\ \ \ \ \vert g'(r)\vert \leq \gamma_1,\ \ \ \ \ r \in \,\mathbb{R}.\] Therefore, for every $\mu>0$ and $t \in\,[0,T]$
\[
\Vert \rho_\mu(t)\Vert_H\leq \gamma_1\Vert u_\mu(t)\Vert_H,\ \ \ \ \ \Vert \rho_\mu(t)\Vert_{H^1}\leq \gamma_1\Vert u_\mu(t)\Vert_{H^1}.\]
As a consequence of \eqref{gs24quater} and \eqref{[2]}, this implies that there exist $c_T, \mu_T>0$ such that
\begin{equation}
\label{cor2}
\mathbb{E} \sup_{t\in[0,T]}\le( \Vert \rho_\mu(t)\Vert_{H}^2+\sqrt{\mu}\,\Vert \rho_\mu(t)\Vert_{H^1}^2\r)+\mathbb{E}\int_0^T\Vert \rho_\mu(s)\Vert_{H^1}^2\,ds\leq c_T,\ \ \ \ \mu \in\,(0,\mu_T).
\end{equation}
Since $g(r)$ is a strictly increasing function, it is invertible and for every $\mu>0$
$$u_\mu(t)=g^{-1}(\rho_\mu(t)),\ \ \ \ \ t \in\,[0,T].$$
This implies that
\begin{align*}
    \Delta u_\mu(t) =\text{div}\left[\nabla g^{-1}(\rho_\mu(t)) \right]
    = \text{div}\left[ \frac{1}{\gamma(g^{-1}(\rho_\mu(t)))}\nabla\rho_\mu(t) \right].
\end{align*}
Moreover, by the definition of $\rho_\mu$, we have that \begin{equation}
    \nabla \rho_\mu(t)=\gamma(u_\mu(t))\nabla u_\mu(t),\qquad \partial_t  \rho_\mu(t)=\gamma(u_\mu(t))\partial_t u_\mu(t).
\end{equation}
Therefore, if we define
\begin{equation}
\label{gs2031}
b(r):=\frac{1}{\gamma(g^{-1}(r))},\ \ \ \ \ F(r):=f(g^{-1}(r)),\ \ \ \ r \in\,\mathbb{R},\end{equation}
and
\begin{equation}
\label{gs2032}
\si_g(h):=\si(g^{-1}\circ h),\ \ \ \ \ h \in\,H,
\end{equation}
we can rewrite equation \eqref{SPDE2} into the following form
\begin{equation}\label{rhoEquation}
\begin{aligned}
    \rho_\mu(t)+\mu \partial_t u_\mu(t)& =g(u_0)+\mu v_0 +\int_0^t \text{div}[b(\rho_\mu(s))\nabla \rho_\mu(s)] ds\\[10pt]
    &\quad + \int_0^t F(\rho_\mu(s)) ds+ \int_0^t\si_g(\rho_\mu(s))dw^Q(s)
    \end{aligned}
\end{equation}
in space $H^{-1}$. From Assumption \ref{Assumption2}, we have
\begin{equation}\label{eq:AUniform}
   0<\frac 1{\gamma_1}\leq b(r)\leq \frac 1{\gamma_0},\ \ \ \ r \in\,\mathbb{R}. 
\end{equation}

In what follows, we shall define
\begin{equation}
\label{gs1003}
X_1:=C\big([0,T];\bigcap_{\d>0} H^{-\d}\big),\ \ \ \ \ X_2:=\bigcap_{p<\infty} L^p(0,T;H).
\end{equation}
Both spaces turn out to be complete and separable metric spaces, endowed with the distances
\begin{equation}
\label{d1}
d_{X_1}(x,y)=\sum_{n=1}^\infty \frac1{2^n}\le(|x-y|_{C([0,T];H^{-\frac{1}{n}})}\wedge 1\r),\end{equation}
and
\begin{equation}
\label{d2}
d_{X_2}(x,y)=\sum_{n=1}^\infty \frac1{2^n}\le(|x-y|_{L^n(0,T;H)}\wedge 1\r).\end{equation}
Notice that both $X_1$ and $X_2$ contain $L^\infty(0,T,H)$, with proper inclusion.

\begin{Theorem}\label{theorem:tightness}
Assume that Assumptions \ref{Assumption1}, \ref{Assumption2} and \ref{Assumption3} are satisfied, and fix  any initial datum $(u_0,v_0)\in \mathcal{H}_1$ and any $T>0$. Then,  for any sequence $(\mu_k)_{k \in\,\mathbb{N}}$ converging to zero,  the family of   probability measures 
\[(\mathcal{L}(g(u_{\mu_k})))_{k \in\,\mathbb{N}}\subset \mathcal{P}(X_1\cap X_2),\]
is tight.
\end{Theorem}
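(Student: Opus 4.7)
The plan is to establish tightness in $X_1\cap X_2$ by proving tightness separately in each factor, and then intersecting the resulting compact sets. Since both $X_1$ and $X_2$ are Polish spaces whose topologies are generated by countable families of seminorms (as encoded in the distances \eqref{d1} and \eqref{d2}), and since a set is compact in the product distance $d_{X_1}+d_{X_2}$ iff it is compact in each component, tightness in $X_1\cap X_2$ reduces to proving tightness of $\mathcal{L}(\rho_{\mu_k})$ in $C([0,T];H^{-\delta})$ for every $\delta>0$ and in $L^p(0,T;H)$ for every $p<\infty$.

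The key structural remark is the decomposition $\rho_\mu(t)=\eta_\mu(t)-\mu\,\partial_t u_\mu(t)$, where $\eta_\mu(t)$ is the right-hand side of \eqref{rhoEquation}. By Proposition \ref{Energy estimate},
\[
\mathbb{E}\sup_{t\in[0,T]}\Vert \mu\,\partial_t u_\mu(t)\Vert _H^2\leq \mu^2\,\mathbb{E}\sup_{t\in[0,T]}\Vert \partial_t u_\mu(t)\Vert _H^2\leq c_T\sqrt{\mu},
\]
so $\mu\,\partial_t u_\mu\to 0$ in $L^2(\Omega;C([0,T];H))$. Hence it is enough to establish the two tightness results for $\eta_\mu$, since the difference is negligible in probability in both $C([0,T];H^{-\delta})$ and $L^p(0,T;H)$.

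For tightness in $C([0,T];H^{-\delta})$ I would apply an Ascoli--Arzel\`a criterion. Pointwise tightness in $H^{-\delta}$ is immediate from \eqref{cor2} combined with the above bound on $\mu\,\partial_t u_\mu$, because then $\eta_\mu(t)$ stays bounded in $H$ in probability, and the embedding $H\hookrightarrow H^{-\delta}$ is compact. For equicontinuity, I would estimate the three pieces of $\eta_\mu(t)-\eta_\mu(s)$ from \eqref{rhoEquation}: the divergence drift satisfies $\Vert \int_s^t \mathrm{div}[b(\rho_\mu)\nabla\rho_\mu]\,dr\Vert_{H^{-1}}\leq \gamma_0^{-1}\,|t-s|^{1/2}\bigl(\int_0^T\Vert\rho_\mu\Vert_{H^1}^2\,dr\bigr)^{1/2}$; the $F$-drift is Lipschitz in $H$ with constant controlled by $\sup_r\Vert\rho_\mu(r)\Vert_H$; and the stochastic integral satisfies the BDG-type bound $\mathbb{E}\Vert M_\mu(t)-M_\mu(s)\Vert_H^{2p}\leq c_p\,|t-s|^p\,\sigma_\infty^{2p}$ for every $p\geq 1$, giving H\"older trajectories in $H$ through Kolmogorov's criterion. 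For $\delta\geq 1$ these bounds already give equicontinuity via $H^{-1}\hookrightarrow H^{-\delta}$; for $0<\delta<1$ I would use the interpolation inequality
\[
\Vert \eta_\mu(t)-\eta_\mu(s)\Vert _{H^{-\delta}}\leq C\,\Vert \eta_\mu(t)-\eta_\mu(s)\Vert _H^{1-\delta}\,\Vert \eta_\mu(t)-\eta_\mu(s)\Vert _{H^{-1}}^{\delta},
\]
combining the probabilistic boundedness of $\eta_\mu$ in $H$ with the H\"older-in-$H^{-1}$ estimates above. This yields equicontinuity in $H^{-\delta}$ in probability, uniformly in small $\mu$.

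For tightness in $L^p(0,T;H)$ I would invoke a stochastic Aubin--Lions/Simon compactness argument. From \eqref{cor2}, $(\rho_\mu)$ is bounded in $L^2(\Omega;L^2(0,T;H^1))$. From \eqref{rhoEquation} and the inequalities above, I would derive a uniform time-increment estimate of the form
\[
\mathbb{E}\int_0^{T-h}\Vert \rho_\mu(t+h)-\rho_\mu(t)\Vert _{H^{-1}}^2\,dt\leq c\,(\mu+h),
\]
where the $\mu$-term comes from controlling $\mu\,\partial_t u_\mu$ in $L^2(0,T;H^{-1})$ via Proposition \ref{lemma100} (which gives $\mu^2\int_0^T\mathbb{E}\Vert \partial_t u_\mu\Vert_H^2\,dt\leq c_T\,\mu$), and the $h$-term comes from the integral and martingale pieces. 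Since we are working with a sequence $\mu_k\to 0$, this right-hand side can be made arbitrarily small uniformly in $k$ by first choosing $k$ large (so that $\mu_k$ is small) and then $h$ small, with the finitely many remaining $\rho_{\mu_k}$ being individually continuous in $H^{-1}$. Together with the $L^2(0,T;H^1)$ bound and the compact embedding $H^1\hookrightarrow H\hookrightarrow H^{-1}$, this yields tightness in $L^2(0,T;H)$. To upgrade to $L^p(0,T;H)$ for all $p<\infty$, I would intersect each $L^2$-compact set with $\{u:\Vert u\Vert_{L^\infty(0,T;H)}\leq R\}$ for large $R$, using Markov's inequality and the bound $\mathbb{E}\sup_t\Vert\rho_\mu\Vert_H^2\leq c_T$; on such sets, $L^2$- and $L^p$-convergences coincide by the trivial interpolation $\Vert u\Vert_{L^p}^p\leq (2R)^{p-2}\Vert u\Vert_{L^2}^2$. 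The main obstacle I anticipate is the careful bookkeeping of the $\mu\,\partial_t u_\mu$ contribution in the time-increment estimate: this term is only small on average through Proposition \ref{Energy estimate}, not pathwise, so one must exploit the convergence $\mu_k\to 0$ rather than a uniform-in-$\mu$ smoothness of $\rho_\mu$.
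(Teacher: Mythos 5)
Your treatment of the $C([0,T];H^{-\delta})$ component is essentially the paper's own argument: you use the same decomposition $\rho_\mu=(\rho_\mu+\mu\,\partial_t u_\mu)-\mu\,\partial_t u_\mu$, the same drift estimates in $H^{-1}$, a BDG/Kolmogorov bound for the martingale part in place of the factorization argument, and Proposition \ref{Energy estimate} to kill $\mu\,\partial_t u_\mu$ in $C([0,T];H)$; this half is sound (modulo turning ``equicontinuity in probability'' into actual compact sets via Markov's inequality, which your estimates permit).

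The genuine gap is in the $L^p(0,T;H)$ part. Your time-increment estimate $\mathbb{E}\int_0^{T-h}\Vert\rho_{\mu_k}(t+h)-\rho_{\mu_k}(t)\Vert_{H^{-1}}^2\,dt\le c(\mu_k+h)$ does not furnish a modulus of continuity that vanishes as $h\to 0$ uniformly in $k$: for any fixed tail $k\ge k_0$ the right-hand side tends to $c\,\sup_{k\ge k_0}\mu_k>0$ as $h\to 0$, so the union bound over a sequence of scales $h_m\downarrow 0$ with thresholds $\delta_m\downarrow 0$ diverges, no matter how $k_0$ is chosen beforehand. The proposed remedy --- ``first choose $k$ large, then $h$ small, and handle the finitely many remaining $\rho_{\mu_k}$ by individual continuity'' --- does not produce a single compact set: if you let the tail cutoff depend on the scale $m$, the resulting set only controls translates along the adaptively chosen sequence $(h_m)$, and translate control along a sequence (rather than a uniform modulus over all small $h$) is not a compactness criterion in $L^2(0,T;H)$ (rapidly oscillating-in-time functions, bounded in $L^2(0,T;H^1)$, can satisfy any fixed countable family of such constraints without being precompact), while qualitative a.s.\ continuity of finitely many variables cannot be summed over infinitely many scales. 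Two repairs are available. Within your scheme, note that $\Vert\rho_{\mu_k}(t+h)-\rho_{\mu_k}(t)\Vert_{H}\le\gamma_1\int_t^{t+h}\Vert\partial_s u_{\mu_k}(s)\Vert_H\,ds$, so Proposition \ref{lemma100} gives the complementary bound $c_T\,h/\mu_k$; combining the two regimes ($h\le\mu_k^2$ versus $h>\mu_k^2$) yields $\mathbb{E}\int_0^{T-h}\Vert\rho_{\mu_k}(t+h)-\rho_{\mu_k}(t)\Vert_{H^{-1}}^2\,dt\le c_T\sqrt{h}$ uniformly in $k$, which is the uniform modulus you need. The paper avoids the issue differently: it builds the candidate compact set for $\rho_{\mu_k}$ as a sum $(K_1^\epsilon\cap K_2^\epsilon)+K_3^\epsilon$ of a relatively compact set (for $\rho_{\mu_k}+\mu_k\partial_t u_{\mu_k}$, which does enjoy uniform H\"older bounds in $H^{-1}$) and a compact set in $C([0,T];H)$ for $-\mu_k\partial_t u_{\mu_k}$ (available because this sequence converges to zero in probability), and then extracts the uniform time-translate property \eqref{tightInequality3} a posteriori from compactness of this sum in $C([0,T];H^{-\delta})$, feeding it into Simon's $L^p$ criterion together with the $L^2(0,T;H^1)$ bound from \eqref{cor2}. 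Either route closes the gap; as written, your argument does not.
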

\begin{proof}
For every $\theta \in\,(0,1)$, let   $C^\theta([0,T];H^{-1})$  denote the space of $\theta$-H\"older continuous functions  defined on $[0,T]$ with values in $H^{-1}$.
As a first step, we prove that there exists some $\theta \in\,(0,1)$ such that the family  
\[(\rho_\mu+\mu \partial_t u_\mu)_{\mu \in\,(0,\mu_T)}\subset L^1(\Omega;C^\theta([0,T];H^{-1}))\]
 is bounded. For the first integral term in \eqref{rhoEquation}, given any $0\leq t_1<t_2 \leq T$, by \eqref{cor2}  and \eqref{eq:AUniform} we have
\begin{equation}\label{holder2}
   \begin{aligned}
\mathbb{E}    \int_{t_1}^{t_2} \Vert \text{div}[b(\rho_\mu(s))\nabla \rho_\mu(s)] \Vert_{H^{-1}} ds & \leq c\, \mathbb{E}\int_{t_1}^{t_2} \Vert b(\rho_\mu(s))\nabla \rho_\mu(s) \Vert_H  ds\\[0pt]
    &\leq \frac{c}{\gamma_0}(t_2 - t_1)^{\frac{1}{2}} \left(\, \int_{0}^{T} \mathbb{E}\Vert \rho_\mu (s) \Vert _{H^1}^2 ds  \right)^{\frac{1}{2}}\\[10pt]
    &\leq \frac{c}{\gamma_0}(t_2 - t_1)^{\frac{1}{2}}.
\end{aligned} 
\end{equation}
For the second integral term in \eqref{rhoEquation}, thanks to \eqref{gs24quater} we have
\begin{equation}\label{holder3}
 \mathbb{E}\,   \int_{t_1}^{t_2}\Vert F(\rho_\mu(s))\Vert_{H^{-1}} ds = \mathbb{E}\,  \int_{t_1}^{t_2}\Vert f(u_\mu(s))\Vert_{H^{-1}} ds \leq c\,   \int_{t_1}^{t_2}\le(\mathbb{E}\, \Vert u_\mu(s)\Vert_{H}+1\r)\, ds \leq c(t_2 -t_1).
\end{equation}
Finally, due to the boundedness of $\si_g$, by proceeding as in \cite[Theorem 5.11 and Theorem 5.15]{DPZ}, by using a factorization argument we have that 
\[\sup_{\mu>0}\mathbb{E}\le\Vert \int_0^\cdot \si(u_\mu(s))dw^Q(s)\r\Vert_{C^\theta(0,T;H^{-1})}<\infty.\]
for any $\theta \in\,(0,1/2)$. 
Therefore, by putting this together with  \eqref{holder2} and \eqref{holder3},  from \eqref{rhoEquation} we can conclude  that for any $\theta \in\,(0,1/2)$
\begin{equation}\label{uniformHolder}
    \sup_{\mu \in\,(0,\mu_T)}\mathbb{E} \,\Vert \rho_\mu +\mu \partial_t u_\mu \Vert_{C^\theta ([0,T];H^{-1})} <\infty.
\end{equation}
Moreover,  thanks to estimates \eqref{[2]} and \eqref{cor2} we have 
\begin{equation}\label{UniformBounded}
   \sup_{\mu \in\,(0,\mu_T)} \mathbb{E}\Vert \rho_\mu +\mu \partial_t u_\mu \Vert_{C([0,T];H)}<\infty.
\end{equation}

Now,  due to \eqref{uniformHolder} and \eqref{UniformBounded}, for any $\epsilon>0$ there exist two constants $L^\e_1,L^\e_2>0$ such that, if we define
$$K^\e_1=\{f:[0,T]\times \mathbb{R}\to \mathbb{R}\ : \Vert f\Vert _{C^\theta([0,T];H^{-1})} \leq L^\e_1\}$$
and 
$$K^\e_2=\{f:[0,T]\times \mathbb{R}\to \mathbb{R}\ : \Vert f\Vert _{C([0,T];H)} \leq L^\e_2\},$$
then 
\begin{equation}\label{tightInequality1}
\inf_{\mu \in\,(0,\mu_T)  }    \mathbb{P}(\rho_\mu+\mu \partial_t u_\mu\in K^\e_1\cap K^\e_2)>1-\frac{\epsilon}{3}.
\end{equation}
By the compact embedding of $H$ into $H^{-\d}$,  we know that $K^\e_1 \cap K^\e_2$ is relatively compact in $C([0,T];H^{-\d})$, for every $\d>0$ (for a proof see  \cite[Theorem 5]{Simon1986}). Therefore, $K_1^\e\cap K_2^\e$ is relatively compact in $X_1$.

In Proposition \ref{Energy estimate}, we have shown that 
\[\lim_{\mu\to 0}\,\mathbb{E}\Vert \mu \partial_tu_\mu \Vert_{C([0,T];H)}^2=0.\]
Hence for every sequence $(\mu_k)_{k \in\,\mathbb{N}}\subset (0,\mu_T)$ converging to zero there is a compact set $K_3^\e$ in $C([0,T];H)$ such that 
\begin{equation}
\label{tightInequality2}
\mathbb{P}(-\mu_k \partial_t u_{\mu_k}\in K_3^\e)>1-\frac{\e}{6},\ \ \ \ \ k \in\,\mathbb{N}.
\end{equation}
Since $C([0,T];H)\subset X_1$, $K^\e_3$ is  also compact in $X_1$.
Then $(K^\e_1 \cap K^\e_2)+K_3^\e$ is relatively compact in $X_1$, and thanks to  \eqref{tightInequality1} and \eqref{tightInequality2},
for every $k \in\,\mathbb{N}$
\begin{align}\label{tightInequality4}
    \mathbb{P}(\rho_{\mu_k} \in (K^\e_1 \cap K^\e_2)+K_3^\e)\geq \mathbb{P}(\rho_{\mu_k}+\mu_k \partial u_{\mu_k} \in K^\e_1\cap K^\e_2,-\mu_k \partial_t u_{\mu_k}\in K_3^\e)> 1-\frac{\epsilon}{2}.
\end{align}
By the arbitrariness of $\epsilon>0$, this means that the family of probability measures $(\mathcal{L}(\rho_{\mu_k}))_{k\in \mathbb{N}}$ is tight in $X_1$.

Now, due to the characterization given in \cite[Theorem 1]{Simon1986} for compact sets in $C([0,T];H^{-\d})$, if for every $h \in\,(0,T)$ we define 
\[\tau_h f(t)=f(t+h),\ \ \ \ \ t \in\,[-h,T-h],\]
we have
\begin{equation}\label{tightInequality3}
    \lim_{h\rightarrow 0} \sup_{f\in (K^\e_1 \cap K^\e_2)+K_3^\e} \Vert \tau_h f -f\Vert_{C([0,T-h];H^{-\d})}=0,\ \ \ \ \d>0.
\end{equation}

Next, due to \eqref{cor2}, 
there exists $L^\e_4>0$ such that if we define 
\begin{equation*}
    K^\e_4=\{f:[0,T]\times \mathbb{R}\to \mathbb{R}\ :\ \Vert f\Vert_{L^2(0,T;H^1)}\leq L^\e_4\},
\end{equation*}
then 
\begin{equation}\label{tightInequality5}
\inf_{\mu \in\,(0,\mu_T)}  \mathbb{P}(\rho_\mu \in K^\e_4)>1-\frac{\epsilon}{2}.
\end{equation}
Thus, if we take 
\[K^\e:=[(K^\e_1 \cap K^\e_2)+K_3^\e]\cap K^\e_4,\]
 from \eqref{tightInequality4} and \eqref{tightInequality5} we obtain 
\begin{equation}
\label{gs1002}
\inf_{k \in\,\mathbb{N}}    \mathbb{P}(\rho_{\mu_k}\in K^\e)>1-\epsilon.
\end{equation}

Now, let us fix $p \in\,(2,\infty)$ and let us define
\[\d_p=\frac 2{p-2},\ \ \ \ \ \a_p=\frac{p-2}{p}.\]
 It is immediate to check that
\[\Vert x\Vert_H\leq c_p \Vert x\Vert_{H^{-\d_p}}^{\a_p}\Vert x\Vert_{H^1}^{1-\a_p}.\]
Due to \eqref{tightInequality3}, we have
\[  \lim_{h\rightarrow 0} \sup_{f\in K^\e} \Vert \tau_h f -f\Vert_{C([0,T-h];H^{-\d_p})}=0.\]
Moreover, $K^\epsilon$ is bounded in $L^2(0,T;H^1)$. Then, since
\[\frac {\a_p}\infty +\frac {1-\a_p}2 =\frac 1p,\]
according to \cite[Theorem 7]{Simon1986} we have that $K^\e$ is relatively compact in $L^p(0,T;H)$. Due to the arbitrariness of $p<\infty$, we have that  $K^\e$ is relatively compact in $X_2$. By the arbitrariness of $\e>0$ and \eqref{gs1002}, this allows us to conclude that  the family of probability measures $(\mathcal{L}(\rho_{\mu_k}))_{k\in \mathbb{N}}$ is tight in $X_2$.

\end{proof}

\section{Uniqueness for the quasilinear parabolic equations}
\label{sec4}
In this section, we prove the uniqueness of solutions for the following quasilinear stochastic parabolic equation
\begin{equation}
\label{SPDERho}
\le\{\begin{array}{l}
\ds{\partial_t \rho=\text{div}[b(\rho) \nabla \rho]+F(\rho)+\si_g(\rho)dw^Q(t), \ \ \ \  t>0, \ \ \ \ x\in \mathcal{O};}\\[10pt]
\ds{\rho(0,x)=g(u_0),\ \ \ \ \ \ \ \ \rho(t,x)=0,\ \ \ x\in \partial \mathcal{O},}
\end{array}\r.
\end{equation}
where, we recall 
\[
b(r)=\frac 1{\gamma(g^{-1}(r))},\ \ \ \ F(r)=(f\circ g^{-1})(r),\ \ \ r \in\,\mathbb{R}, \]
and 
\[\si_g(h)=\si(g^{-1}\circ h),\ \ \ \ h \in\,H.\]
Notice that because of our assumptions on $\gamma$ and $f$, the functions $b$ and $F$ are both globally Lipschitz continuous on $\mathbb{R}$ and  the mapping $\si_g:H\to \mathcal{L}_2(H_Q,H)$ is bounded and Lipschitz continuous. 

\begin{Definition}
{\em An $(\mathcal{F}_t)_{t\geq 0}$ adapted process $\rho \in\, L^2(\Omega; C([0,T];H^{-1}))\cap L^2(\Omega;L^2(0,T;H^1))$ is said to be a solution of equation \eqref{SPDERho} if for every test function $\psi\in C^\infty_0(\mathcal{O})$
\begin{equation}
\label{gs68}
\begin{aligned}
    \langle \rho(t),\psi\rangle_H& =\langle g(u_0),\psi\rangle_H -\int_0^t \langle b(\rho(s))\nabla \rho(s),\nabla \psi\rangle_H ds\\[10pt]
    & \quad +\int_0^t \langle F(\rho(s)),\psi\rangle_H ds +\int_0^t\langle \si_g(\rho(s))dw^Q(s),\psi\rangle_H.
    \end{aligned}
\end{equation}}
\end{Definition}

\begin{Theorem}\label{RhoUniquenessTheorem}
Suppose Assumptions \ref{Assumption1}, \ref{Assumption2} and \ref{Assumption3} are satisfied. Then there is at most one solution $\rho \in\, L^2(\Omega; C([0,T];H^{-1}))\cap L^2(\Omega;L^2(0,T;H^1))$ to equation \eqref{SPDERho}.
\end{Theorem}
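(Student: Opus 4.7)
The plan is to derive an energy estimate for the difference $v := \rho_1 - \rho_2$ of two putative solutions in the $H^{-1}$-norm, exploiting the monotonicity of the divergence-form operator. The crucial preliminary step is the antiderivative trick: defining $B(r) := \int_0^r b(s)\,ds$, Assumption \ref{Assumption2} gives $1/\gamma_1 \leq B'(r) = b(r) \leq 1/\gamma_0$, so that equation \eqref{SPDERho} takes the equivalent form $d\rho = \Delta B(\rho)\,dt + F(\rho)\,dt + \si_g(\rho)\,dw^Q$ in $H^{-1}$. Subtracting, $v$ satisfies
\[
dv = \bigl[\Delta(B(\rho_1)-B(\rho_2)) + F(\rho_1)-F(\rho_2)\bigr]dt + \bigl(\si_g(\rho_1) - \si_g(\rho_2)\bigr)\,dw^Q, \qquad v(0)=0.
\]
The elementary identity $\langle \Delta w, \zeta\rangle_{H^{-1}} = -\langle w,\zeta\rangle_H$ for $w, \zeta \in H$, combined with the lower bound on $B'$, then yields the key dissipation
\[
\langle \Delta(B(\rho_1)-B(\rho_2)),\, v\rangle_{H^{-1}} = -\langle B(\rho_1)-B(\rho_2),\, \rho_1-\rho_2\rangle_H \leq -\frac{1}{\gamma_1}\Vert v\Vert_H^2.
\]

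Since both solutions belong to $L^2(\Omega; L^2(0,T;H^1)\cap C([0,T];H^{-1}))$, the drift of $v$ lies in $L^2(0,T;H^{-1})$ and the diffusion coefficient lies in $L^2(0,T;\mathcal{L}_2(H_Q,H))$. I would then apply the Krylov--Rozovskii/Pardoux It\^o formula for $\Vert\cdot\Vert_{H^{-1}}^2$ in the Gelfand triple $H \subset H^{-1} \subset H^{-2}$ to obtain, after taking expectations,
\[
\mathbb{E}\Vert v(t)\Vert_{H^{-1}}^2 = \mathbb{E}\int_0^t \Bigl[-2\langle B(\rho_1)-B(\rho_2),\rho_1-\rho_2\rangle_H + 2\langle F(\rho_1)-F(\rho_2), v\rangle_{H^{-1}} + \Vert \si_g(\rho_1)-\si_g(\rho_2)\Vert^2_{\mathcal{L}_2(H_Q,H^{-1})}\Bigr]ds.
\]
For the middle term, the Lipschitz continuity of $F$ together with \eqref{Poincare} gives $\Vert F(\rho_1)-F(\rho_2)\Vert_{H^{-1}} \leq c\,\Vert v\Vert_H$, so by Cauchy--Schwarz and Young, $2\langle F(\rho_1)-F(\rho_2), v\rangle_{H^{-1}} \leq \e\,\Vert v\Vert_H^2 + C_\e\Vert v\Vert_{H^{-1}}^2$. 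For the quadratic variation, the Lipschitz continuity of $\si_g$ inherited from Assumption \ref{Assumption1} and of $g^{-1}$ from Assumption \ref{Assumption2}, together with the continuous embedding $H \hookrightarrow H^{-1}$, yields $\Vert \si_g(\rho_1)-\si_g(\rho_2)\Vert^2_{\mathcal{L}_2(H_Q,H^{-1})} \leq c\,\Vert v\Vert_H^2$.

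The main obstacle, and the motivation for working in $H^{-1}$ rather than $H$, is that the destabilising $\Vert v\Vert_H^2$ contributions from the $F$-term and the noise quadratic variation must be absorbed into the negative dissipation $-(2/\gamma_1)\Vert v\Vert_H^2$. Choosing $\e$ sufficiently small and using the Poincar\'e bound $\Vert v\Vert_H^2 \geq \a_1\Vert v\Vert_{H^{-1}}^2$ to trade any remaining $\Vert v\Vert_H^2$ factors for $\Vert v\Vert_{H^{-1}}^2$, one reduces the estimate to a Gr\"onwall inequality of the form $\frac{d}{dt}\mathbb{E}\Vert v(t)\Vert_{H^{-1}}^2 \leq C\,\mathbb{E}\Vert v(t)\Vert_{H^{-1}}^2$. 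Since $v(0)=0$, this yields $\mathbb{E}\Vert v(t)\Vert_{H^{-1}}^2 \equiv 0$ on $[0,T]$, and by the continuity of trajectories in $H^{-1}$ we conclude $\rho_1 = \rho_2$ almost surely. A remaining technical point is the rigorous justification of the It\^o formula for $\Vert\cdot\Vert_{H^{-1}}^2$ in the variational framework, which can be handled either directly via the standard formula for monotone SPDEs, or by approximating $v$ through Yosida-regularised processes $J_\la v$ and passing to the limit $\la \to 0$.
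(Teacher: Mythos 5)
Your setup is fine up to a point: writing $\text{div}[b(\rho)\nabla\rho]=\Delta B(\rho)$ with $B=\int_0^\cdot b = g^{-1}$, the identity $\langle \Delta w,\zeta\rangle_{H^{-1}}=-\langle w,\zeta\rangle_H$, and the variational It\^o formula for $\Vert v\Vert_{H^{-1}}^2$ are all legitimate, and for \emph{additive} noise this $H^{-1}$-contraction argument would indeed close. The genuine gap is the absorption step for the multiplicative noise. Since $\sigma_g$ is a Nemytskii-type operator, it is Lipschitz only as a map $H\to\mathcal{L}_2(H_Q,H)$, not from $H^{-1}$ into $\mathcal{L}_2(H_Q,H^{-1})$; the best available bound for the quadratic-variation term is therefore of the form $\frac{L}{\alpha_1\gamma_0^2}\,\Vert v\Vert_H^2$, with $L$ the constant in \eqref{sgfine1}. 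The only negative term you have is $-\frac{2}{\gamma_1}\Vert v\Vert_H^2$, so after Young's inequality the coefficient of $\Vert v\Vert_H^2$ is $-\frac{2}{\gamma_1}+\epsilon+\frac{L}{\alpha_1\gamma_0^2}$, and nothing in Assumptions \ref{Assumption1}--\ref{Assumption3} makes this nonpositive: there is no smallness hypothesis relating $L$ to $\gamma_0,\gamma_1,\alpha_1$. Your proposed rescue --- ``using the Poincar\'e bound $\Vert v\Vert_H^2\geq\alpha_1\Vert v\Vert_{H^{-1}}^2$ to trade any remaining $\Vert v\Vert_H^2$ factors for $\Vert v\Vert_{H^{-1}}^2$'' --- uses \eqref{Poincare} in the wrong direction: a leftover \emph{positive} multiple of $\Vert v\Vert_H^2$ cannot be dominated by $C\Vert v\Vert_{H^{-1}}^2$ (take $v$ highly oscillatory, with $H^{-1}$-norm small and $H$-norm of order one). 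So Gr\"onwall cannot be closed, and your argument only proves uniqueness under an additional smallness condition on $L$, which is strictly weaker than the theorem.

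This obstruction is exactly why the paper abandons the $H^{-1}$ (monotonicity) estimate and proves an $L^1$-contraction in the spirit of Hofmanov\'a--Zhang: the generalized It\^o formula of Proposition \ref{GeneralIto} is applied to $\langle \varphi_n(\rho_1-\rho_2),\psi\rangle_H$, where $\varphi_n$ are smooth approximations of $r\mapsto |r|$ with $0\leq\varphi_n''(r)\leq \frac{2}{n|r|}$, and $\psi$ is a positive superharmonic test function vanishing on $\partial\mathcal{O}$ (the device replacing the constant test function available under periodic boundary conditions). The point is that the dangerous It\^o correction appears multiplied by $\varphi_n''(\rho_1-\rho_2)$, and the \emph{pointwise} Lipschitz bound \eqref{sgfine1} then makes it of size $O(1/n)$, so it disappears in the limit $n\to\infty$ with no restriction on $L$; the second-order term is handled by the monotonicity of $g^{-1}$ together with $\Delta\psi\leq 0$, and Gr\"onwall is applied to $\mathbb{E}\langle|\rho_1-\rho_2|,\psi\rangle_H$. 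To repair your route you would need $\sigma_g$ to be Lipschitz into $\mathcal{L}_2(H_Q,H^{-1})$ (false for composition operators) or the smallness condition above; otherwise the $L^1$ technique is the way to go.
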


\begin{proof} The proof is a slight modification of  \cite[Proof of Theorem 3.1]{HZ2017}, where Hofmanov\'a and Zhang use a generalized It\^o formula for the $L^1$-norm of solutions of the same class of stochastic quasilinear parabolic equations.  In \cite{HZ2017} the periodic boundary condition on the torus $\mathbb{T}^n$ is considered and this means that the authors can take the identity function on the torus as a test function. Since we are considering here  Dirichlet boundary conditions,  we have to use a different class of test functions.

Let $(\varphi_n)_{n \in\,\mathbb{N}}$ be the sequence of functions constructed in  \cite[Proof of Theorem 3.1]{HZ2017}, which have bounded first and second order derivatives, 
\begin{equation}\label{quasiUnique1}
\varphi_n ' (0)=0, \qquad     \vert \varphi_n ' (r)\vert \leq 1, \qquad  0\leq \varphi_n ''(r) \leq \frac{2}{n\vert r\vert},\ \ \ \ r \in\,\mathbb{R},
\end{equation}
and 
\begin{equation}
\label{gs69}
\lim_{n\to \infty} \sup_{r \in\,\mathbb{R}}|\varphi_n(r) -|r||=0.
\end{equation}  

Now, suppose $\rho_1,\rho_2\in L^2(\Omega; C([0,T];H^{-1}))\cap L^2(\Omega;L^2(0,T;H^1))$ are both  solutions to \eqref{SPDERho}. By the generalized It\^o formula in Proposition \ref{GeneralIto}, for any test function $\psi \in C_0^\infty(\mathcal{O})$ we have 
\begin{equation}
\label{gx006}
\langle \varphi_n (\rho_1(t)-\rho_2(t)),\psi\rangle_H=:\sum_{k=1}^5 I_{k,n}(t),	
\end{equation}
where
\[I_{1,n}(t):=\int_0^t \langle \varphi_n^\prime(\rho_1(s)-\rho_2(s))(F(\rho_1(s))-F(\rho_2(s))),\psi\rangle_H ds ,\]
\[I_{2,n}(t):=- \int_0^t \langle \varphi_n^{\prime \prime}(\rho_1(s)-\rho_2(s))(\nabla \rho_1(s)-\nabla \rho_2(s))\cdot (b(\rho_1(s))\nabla \rho_1(s)-b(\rho_2(s))\nabla \rho_2(s)),\psi\rangle_H ds,\]

\[I_{3,n}(t):=-\int_0^t \langle \varphi_n '(\rho_1(s)-\rho_2(s))(b(\rho_1(s))\nabla \rho_1(s)-b(\rho_2(s))\nabla \rho_2(s)),\nabla\psi\rangle_H ds,\]
\[I_{4,n}(t):=\frac 12\int_0^t\langle \varphi_n^{\prime \prime}(\rho_1(s)-\rho_2(s))\sum_{i=1}^\infty \le\vert\le[\si_g(\rho_1(s))-\si_g(\rho_2(s))\r]Qe_i\r\vert^2,\psi\rangle_H\,ds,\]
and
\[I_{5,n}(t):=\int_0^t\langle \varphi_n^\prime(\rho_1(s)-\rho_2(s))\le[\si_g(\rho_1(s))-\si_g(\rho_2(s))\r]dw^Q(s),\psi\rangle_H.\]

By the boundedness of $\varphi_n'$ and $\varphi_n''$, \eqref{gx006} is also valid for any $\psi\in H^1 \cap C(\mathcal{O})$ with $\psi=0$ on $\partial \mathcal{O}$ by approximation, i.e. there exist $\psi_n\in C_0^\infty(\mathcal{O})$ converging to $\psi$ in both $L^\infty$ and $H^1$ norms. In particular, here we take the test function $\psi$ to be positive superharmonic with non-positive $\Delta \psi \in L^2$. 
Thanks to \eqref{quasiUnique1} and the Lipschitz continuity of $F$,
\begin{equation*}
    I_{1,n}(t)\leq c\int_0^t \langle\vert \rho_1(s)-\rho_2(s)\vert, \psi \rangle_H ds.
\end{equation*}
For the second term, thanks to \eqref{eq:AUniform},  \eqref{quasiUnique1} and the Lipschitz continuity of $b$
\begin{align*}
    I_{2,n}(t)&=-\int_0^t \langle \varphi_n ''(\rho_1(s)-\rho_2(s)) b(\rho_1(s))(\nabla \rho_1(s)-\nabla \rho_2(s))\cdot (\nabla \rho_1(s)-\nabla \rho_2(s)),\psi\rangle_H ds\\[10pt]
    &\quad-\int_0^t \langle \varphi_n ''(\rho_1(s)-\rho_2(s)) (b(\rho_1(s))-b(\rho_2(s))) (\nabla \rho_1(s)-\nabla \rho_2(s))\cdot\nabla \rho_2(s),\psi\rangle_H ds\\[10pt]
    &\leq \frac{c}{n}\int_0^t \langle \vert\nabla \rho_1(s)-\nabla \rho_2(s)\vert \vert\nabla \rho_2(s)\vert, \psi \rangle_H ds\\[10pt]
    &\leq \frac{c\Vert \psi\Vert_{L^\infty(\mathcal{O})}}{n}\int_0^t \le(\Vert \rho_1(s)\Vert_{H^1}^2+ \Vert \rho_2(s)\Vert_{H^1}^2 \r)ds.
\end{align*}
For the third term, by the definition of $b$, we have $b(\rho)\nabla \rho=\nabla g^{-1}(\rho)$, from which we have
\begin{align*}
 \ds{    I_{3,n}(t) }&= \ds{-\int_0^t \langle \varphi_n '(\rho_1(s)-\rho_2(s))(\nabla g^{-1}(\rho_1(s))-\nabla g^{-1}(\rho_2(s))),\nabla\psi\rangle_H ds}\\[10pt]
     &=\int_0^t \langle \varphi_n ''(\rho_1(s)-\rho_2(s))(\nabla \rho_1(s)-\nabla \rho_2(s))( g^{-1}(\rho_1(s))- g^{-1}(\rho_2(s))),\nabla\psi\rangle_H ds\\[8pt]
     &\quad +\int_0^t \langle \varphi_n '(\rho_1(s)-\rho_2(s))( g^{-1}(\rho_1(s))- g^{-1}(\rho_2(s))),\Delta\psi\rangle_H ds.
\end{align*}
Thanks to \eqref{quasiUnique1} and the Lipschitz continuity of $g^{-1}$, 
\begin{equation*}
    \vert \varphi_n ''(\rho_1(s)-\rho_2(s))( g^{-1}(\rho_1(s))- g^{-1}(\rho_2(s)))\vert\leq c\vert \varphi_n ''(\rho_1(s)-\rho_2(s))\vert \,\vert \rho_1(s)- \rho_2(s)\vert\leq \frac{c}{n}.
\end{equation*}
Since   $\varphi^\prime_n$ is increasing and $\varphi^\prime_n(0)=0$, we have $\text{sign} \,\varphi^\prime_n(r)=\text{sign}\, r$. Then, as  $g^{-1}$ is also increasing, we have
\[\varphi_n '(r_1-r_2)( g^{-1}(r_1)- g^{-1}(r_2))\geq 0,\ \ \ \ \ \ r_1, r_2 \in\,\mathbb{R}.\]
Together with $\Delta\psi\leq 0$ on $\mathcal{O}$, for every $t \in\,[0,T]$ we have 
\begin{equation*}
    I_{3,n}(t)\leq \frac{c}{n}\int_0^t \langle \vert \nabla \rho_1(s)-\nabla \rho_2(s)\vert, \vert \nabla \psi\vert\rangle_H ds\leq \frac{c_T\Vert  \psi\Vert_{H^1}}{n}\int_0^t \le(\Vert \rho_1(s)\Vert_{H^1}^2+ \Vert \rho_2(s)\Vert_{H^1}^2 \r)ds.
\end{equation*}
For the fourth term, by Assumption \ref{Assumption1} we have
\begin{equation*}
    \begin{aligned}
     I_{4,n}(t)&= \frac 12\int_0^t\langle \varphi_n^{\prime \prime}(\rho_1(s)-\rho_2(s))\sum_{i=1}^\infty \le\vert\sigma_i(\cdot,g^{-1}(\rho_1(s)))-\sigma_i(\cdot,g^{-1}(\rho_2(s)))\r\vert^2,\psi\rangle_H\,ds\\[10pt]
     &\leq \frac c2\int_0^t\langle \varphi_n^{\prime \prime}(\rho_1(s)-\rho_2(s))\le\vert\rho_1(s)-\rho_2(s)\r\vert^2,\psi\rangle_H\,ds\\[10pt]
     &\leq \frac{c\Vert \psi\Vert_H}{n}\int_0^t (\Vert \rho_1(s)\Vert_H+\Vert \rho_2(s)\Vert_H) ds\\[10pt]
     &\leq \frac{c_T\Vert \psi\Vert_{H^1}}{n}\int_0^t (\Vert \rho_1(s)\Vert_{H^1}^2+\Vert \rho_2(s)\Vert_{H^1}^2) ds.
    \end{aligned}
\end{equation*}
Therefore, we take the expectation of \eqref{gx006} and combine the estimates for $I_{1,n}(t)$, $I_{2,n}(t)$, $I_{3,n}(t)$, and $I_{4,n}(t)$ to obtain
\begin{align*}
    \mathbb{E}  \langle \varphi_n (\rho_1(t)-\rho_2(t)),\psi\rangle_H & \leq \frac{c_T}{n}\le(\Vert  \psi\Vert_{H^1}+\Vert \psi\Vert_{L^\infty(\mathcal{O})}\r)\int_0^t \le(\mathbb{E}\Vert \rho_1(s)\Vert_{H^1}^2+ \mathbb{E}\Vert \rho_2(s)\Vert_{H^1}^2 \r)ds\\[10pt]
    &\quad +c\,\int_0^t \mathbb{E}\langle\vert \rho_1(s)-\rho_2(s)\vert,\psi\rangle_H ds.
\end{align*}
Now, we take the limit above, as $n\rightarrow \infty$,  and we get 
\begin{align*}
 \mathbb{E}   \langle \vert\rho_1(t)-\rho_2(t)\vert ,\psi\rangle_H \leq c \int_0^t \mathbb{E}\langle\vert \rho_1(s)-\rho_2(s)\vert,\psi\rangle_H ds,
\end{align*}
which implies that 
\[\langle \vert\rho_1(t)-\rho_2(t)\vert ,\psi\rangle_H=0, \ \ \ \ \ \  \text{a.s. on }\Omega\times [0,T].\]
 Since this is true for all positive superharmonic $\psi\in C(\mathcal{O}) \cap H^1$ with zero boundary value and non-positive $\Delta \psi \in L^2$, we have $\rho_1=\rho_2$ and the uniqueness follows.
 \end{proof}

\section{The convergence result}
\label{sec7}

Now we are ready to prove the convergence of the solutions to \eqref{SPDE1} and identify the limit as the unique solution of the quasilinear parabolic equation \eqref{gs40}.

\begin{Theorem}
Suppose Assumptions \ref{Assumption1}, \ref{Assumption2} and \ref{Assumption3} are satisfied and, for each $\mu>0$, let $(u_\mu,\partial_t u_\mu)$ denote the unique solution to equation \eqref{SPDE1} with the same initial condition $(u_0,v_0) \in\,\mathcal{H}_1$. Then for every $\d>0$ and $p<\infty$, and for every $\eta>0$
\[\lim_{\mu \to 0} \mathbb{P}\le(\Vert u_\mu-u\Vert_{C([0,T];H^{-\d})}+\Vert u_\mu-u\Vert_{L^p(0,T;H)}>\eta \r)=0,\] where 
$u \in\,L^2(\Omega; X_1\cap X_2\cap L^2(0,T;H^1))$ is the unique solution of equation \eqref{gs40}, with initial datum $u_0$.
\end{Theorem}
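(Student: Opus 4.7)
The plan is to bootstrap from tightness to convergence in probability of $\rho_\mu = g(u_\mu)$, identify the limit as the unique solution of the quasilinear equation \eqref{SPDERho}, and finally derive equation \eqref{gs40} for $u = g^{-1}(\rho)$ via a generalized It\^o formula. The crucial structural observation is that by the definition of $b$ in \eqref{gs2031}, we have $b(\rho_\mu)\nabla\rho_\mu = \nabla g^{-1}(\rho_\mu) = \nabla u_\mu$, so that equation \eqref{rhoEquation} rewrites as
\[
\rho_\mu(t)+\mu\,\partial_t u_\mu(t) = g(u_0)+\mu v_0+\int_0^t \Delta u_\mu(s)\,ds+\int_0^t F(\rho_\mu(s))\,ds+\int_0^t \si_g(\rho_\mu(s))\,dw^Q(s),
\]
which is linear in the highest-order term. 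This bypasses the usual difficulty of passing to a weak limit in a quasilinear divergence-form operator.

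First, given any sequence $\mu_k \to 0$, Theorem \ref{theorem:tightness} furnishes tightness of $(\mathcal{L}(\rho_{\mu_k}))_k$ in $X_1\cap X_2$; together with the tightness of the driving noise on a suitable auxiliary Hilbert space (cf.\ \eqref{contb}), a Skorokhod-type representation yields, along a subsequence, random variables $(\tilde\rho_{\mu_k},\tilde w^Q_k) \to (\tilde\rho,\tilde w^Q)$ almost surely on a new probability space, with the same laws. Using Proposition \ref{Energy estimate} we get $\mu_k \partial_t u_{\mu_k}\to 0$ in $L^2(\tilde\Omega;C([0,T];H))$, so the left-hand side of the rewritten equation converges to $\tilde\rho(t)$. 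The $\Delta u_\mu$ term converges in $L^p(0,T;H^{-2})$ because $u_{\mu_k}=g^{-1}(\rho_{\mu_k})\to g^{-1}(\tilde\rho)=:\tilde u$ strongly in $X_2$ by Lipschitz continuity of $g^{-1}$; the drift $F(\rho_{\mu_k})$ and diffusion $\si_g(\rho_{\mu_k})$ converge by the Lipschitz property, the latter using the standard argument for stochastic integrals under strong convergence of integrands. Combining the a priori bound \eqref{cor2} with weak lower semicontinuity identifies $\tilde\rho\in L^2(\tilde\Omega;L^2(0,T;H^1))$, so the limit satisfies \eqref{SPDERho} in the weak sense \eqref{gs68}.

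Next, the pathwise uniqueness established in Theorem \ref{RhoUniquenessTheorem} combined with a Gy\"ongy--Krylov type argument upgrades the subsequential weak convergence to convergence in probability of the original sequence $\rho_\mu$ to a common limit $\rho$ in $X_1\cap X_2$, defined on the original probability space. By the uniform Lipschitz continuity of $g^{-1}$,
\[
\Vert u_\mu - u\Vert_{C([0,T];H^{-\d})}+\Vert u_\mu-u\Vert_{L^p(0,T;H)}\le C\,\bigl(\Vert \rho_\mu -\rho\Vert_{C([0,T];H^{-\d})}+\Vert \rho_\mu -\rho\Vert_{L^p(0,T;H)}\bigr),
\]
giving the announced convergence in probability, with $u := g^{-1}(\rho)$.

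It remains to show that $u$ solves equation \eqref{gs40}. Writing $\phi=g^{-1}$, we have $\phi'(r)=1/\gamma(\phi(r))$ and $\phi''(r)=-\gamma'(\phi(r))/\gamma^3(\phi(r))$. Applying the generalized It\^o formula from the appendix to the composition $u=\phi(\rho)$, using that $\rho$ satisfies \eqref{SPDERho} with $b(\rho)\nabla\rho=\nabla u$ and noting that the quadratic variation contribution is
\[
\tfrac12\,\phi''(\rho)\sum_{i=1}^\infty(\si_g(\rho)Qe_i)^2 = -\,\frac{\gamma'(u)}{2\gamma^3(u)}\sum_{i=1}^\infty(\si(u)Qe_i)^2,
\]
the martingale term transforms as $\phi'(\rho)\si_g(\rho)\,dw^Q=\si(u)\,dw^Q/\gamma(u)$, and the drift becomes $[\Delta u+f(u)]/\gamma(u)$, so $u$ coincides with the unique solution of \eqref{gs40}. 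The main obstacle is the simultaneous passage to the limit in the stochastic integral and verification that the limiting $\rho$ lies in the regularity class required to apply both Theorem \ref{RhoUniquenessTheorem} and the generalized It\^o formula in the appendix; for the former, the a priori $L^2(\Omega;L^2(0,T;H^1))$ bound coming from \eqref{cor2} passes to the limit by Fatou, while for the latter the necessary test-function regularity is already ensured by the construction of $\rho$.
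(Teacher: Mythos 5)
Your proposal follows essentially the same route as the paper: the rewriting $b(\rho_\mu)\nabla\rho_\mu=\nabla u_\mu$, tightness plus a Skorokhod representation, identification of the limit as a solution of \eqref{SPDERho}, pathwise uniqueness (Theorem \ref{RhoUniquenessTheorem}) combined with the Gy\"ongy--Krylov criterion to get convergence in probability, and finally the generalized It\^o formula to pass from $\rho$ to $u=g^{-1}(\rho)$ and obtain \eqref{gs40}. Three points need tightening. First, the Gy\"ongy--Krylov step cannot be run on a single subsequence: pathwise uniqueness is used to show that the \emph{joint} law of a pair of subsequences concentrates on the diagonal, so the Skorokhod representation must be applied to pairs $\bigl((\rho_{\mu^1_k},\mu^1_k\partial_t u_{\mu^1_k}),(\rho_{\mu^2_k},\mu^2_k\partial_t u_{\mu^2_k}),w^Q\bigr)$, producing two limits $\rho^1,\rho^2$ on the same new probability space driven by the same Wiener process; only then does Theorem \ref{RhoUniquenessTheorem} give $\rho^1=\rho^2$. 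Second, after Skorokhod the driving noises $\hat w^Q_k$ change with $k$, so ``strong convergence of integrands'' alone does not justify passing to the limit in $\int_0^t\si_g(\rho_k)\,d\hat w^Q_k$; the paper handles this via the martingale/quadratic-variation identification of \cite[Lemma 4.9]{DHV2016}, and you should invoke an argument of that type (or an equivalent lemma on convergence of stochastic integrals with varying Wiener processes). Third, the theorem asserts that $u$ is the \emph{unique} solution of \eqref{gs40}, and your proof only shows $u$ is a solution: uniqueness for \eqref{gs40} must be established, which the paper does by applying the generalized It\^o formula in the reverse direction to $\rho_j=g(u_j)$ for two solutions $u_1,u_2$, showing both $g(u_1)$ and $g(u_2)$ solve \eqref{SPDERho}, and invoking Theorem \ref{RhoUniquenessTheorem} again. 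None of these changes your overall strategy, which matches the paper's.
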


\begin{Remark}
{\em 
Here we only consider  deterministic initial data $(u_0,v_0) \in\,H^1(\mathcal{O})\times L^2( \mathcal{O})$, independent of $\mu$. Actually, it is easy to generalize our result to the cases of random initial data $(u_\mu(0),\partial_t u_\mu(0)) \in\,H^1(\mathcal{O})\times L^2(\mathcal{O})$, depending on $\mu$, such that for some $u_0 \in\,H^1(\mathcal{O})$
\[\lim_{\mu\to 0}\le(\mathbb{E}\Vert u_\mu(0)-u_0\Vert^2_{H^1(\mathcal{O})}+\mu^2\,\mathbb{E}\Vert\partial_tu_\mu(0)\Vert^2_ {L^2(\mathcal{O})}\r)=0.\]}
\end{Remark}

\begin{proof} We recall that in the previous section we have introduced the two Polish spaces $X_1$ and $X_2$, endowed with the distances $d_1$ and $d_2$, defined in \eqref{d1} and \eqref{d2}, respectively.
Here, for every $T>0$ we denote
\[\mathcal{K}_T:=[X_1\cap X_2]^2\times [C([0,T];H)]^2\times  C([0,T];U),\]
where $U$ is the Hilbert space containing $H_Q$, with Hilbert-Schmidt embedding (see \eqref{contb}).

In Theorem \ref{theorem:tightness} we have proved that for any sequence $(\mu_k)_{k \in\,\mathbb{N}}$ converging to zero, the
sequence $(\L(\rho_{\mu_k},\mu_{k}\, \partial_t u_{\mu_k}))_{k\in \, \mathbb{N}}$ is tight in $[X_1\cap X_2]\times C([0,T];H)$. Hence, the Skorokhod theorem
assures that, for any two sequences $(\mu^1_k)_{k \in\,\mathbb{N}}$ and $(\mu^2_k)_{k \in\,\mathbb{N}}$
converging to zero, there exist  two subsequences, still denoted by $(\mu^1_k)_{k \in\,\mathbb{N}}$ and $(\mu^2_k)_{k \in\,\mathbb{N}}$, a sequence of
random variables
\[Y_k:=\le((\rho^1_k,\vartheta^1_k),(\rho^2_k,\vartheta^2_k),\hat{w}_k^Q\r),\ \ \ \ k \in\,\nat,\]
in $\mathcal{K}_T$, and a random variable 
\[Y:=(\rho^1,\rho^2,\hat{w}^Q),\]
in $[X_1\cap X_2]^2\times  C([0,T];U)$,
all defined on some
probability space $(\hat{\Omega},\hat{\F},\hat{\Pro})$, such that
\begin{equation}
\label{gs65}
\mathcal{L}(Y_k)=\mathcal{L}\le((\rho_{\mu^1_{k}},\mu^1_{k}\,\partial_t u_{\mu^1_{k}}),(\rho_{\mu^2_{k}},\mu^2_{k}\,\partial_t u_{\mu^2_{k}}),w^Q\r),\ \ \ \ k \in\,\nat,\end{equation}
 and for $i=1, 2$
\begin{equation}
\label{gs66}
    \lim_{k\to\infty}\le(\Vert \rho^i_{k}-\rho^i\Vert_{X_1}+\Vert \rho^i_{k}-\rho^i\Vert_{X_2}+\Vert \vartheta^i_k\Vert_{C([0,T];H)}+\Vert \hat{w}_k^Q-\hat{w}^Q\Vert_{C([0,T];U)}\r)=0,\ \ \ \ \hat{\mathbb{P}}-\text{a.s.}.
\end{equation}
 Notice that, due to \eqref{gs24quater} and \eqref{gs66}, we have
\begin{equation}
\label{gs70}
\rho^i \in\,L^2(\Omega;X_1\cap X_2\cap L^2(0,T;H^1)),\ \ \ \ i=1, 2.
\end{equation}
Next, a filtration $(\hat{\mathcal{F}}_t)_{t\geq 0}$ is introduced in $(\hat{\Omega},\hat{\mathcal{F}},\hat{\mathbb{P}})$, by taking the augmentation of the canonical filtration of $(\rho^1, \rho^2,\hat{w}^Q)$, generated by the restrictions of  $(\rho^1, \rho^2,\hat{w}^Q)$ to every interval $[0,t]$. Due to this construction, $\hat{w}^Q$ is a $(\hat{\mathcal{F}}_t)_{t\geq 0}$ Wiener process with covariance $Q^*Q $ (for a proof see \cite[Lemma 4.8]{DHV2016}).

Now, if we show that $\rho^1=\rho^2$, we have that $\rho_\mu$ converges in probability to some $\rho
\in\,L^2(\Omega;X_1\cap X_2\cap L^2(0,T;H^1))$. Actually, as observed by Gy\"ongy and Krylov in
\cite{gk}, if $E$ is any Polish space equipped with the Borel
$\si$-algebra, a sequence $(\xi_n)_{n \in\,\mathbb{N}}$ of $E$-valued random
variables converges in probability if and only if for every pair
of subsequences $(\xi_m)_{m \in\,\mathbb{N}}$ and $(\xi_l)_{l \in\,\mathbb{N}}$ there exists an
$E^2$-valued subsequence $\eta_k:=(\xi_{m(k)},\xi_{l(k)})$
converging weakly to a random variable $\eta$ supported on the
diagonal $\{(h,k) \in\,E^2\ :\ h=k\}$.

In order to show that $\rho_1=\rho_2$, we prove that they are both a solution of equation \eqref{SPDERho}, which has pathwise uniqueness due to Theorem \ref{RhoUniquenessTheorem}. To this purpose, we use the general method introduced in \cite{DHV2016}.

Due to \eqref{gs65}, both $(\rho^1_k,\vartheta_k^1)$ and $(\rho^2_k,\vartheta_k^2)$ satisfy equation \eqref{rhoEquation}, with $w^Q$ replaced by $\hat{w}_k^Q$. Then, by first taking the scalar product in $H$ of each term in  \eqref{rhoEquation} with an arbitrary but fixed  $\psi\in C^\infty_0(\mathcal{O})$ and then integrating by parts, we get
\begin{equation}\label{rhoApproxi}
\begin{aligned}
\langle \rho^i_k(t)+\vartheta^i_k(t),\psi\rangle_H &=\langle g(u_0)+\mu_k v_0,\psi\rangle_H -\int_0^t \langle b(\rho^i_k(s)) \nabla \rho^i_k(s),\nabla \psi \rangle_H ds\\[10pt]
&\quad+\int_0^t \langle F(\rho^i_k(s)),\psi \rangle_H ds +\int_0^t\langle \si_g(\rho^i_k(s))d\hat{w}_k^Q (s),\psi\rangle_H,\ \ \ \ \ i=1, 2.
\end{aligned}    
\end{equation}

Now, since $g$ is invertible, we can define
\begin{equation*}
    u^i_k(t,x)=g^{-1}(\rho^i_k(t,x)),\ \ \ \ \ u^i(t,x)=g^{-1}(\rho^i(t,x)),\ \ \ \ (t,x) \in\,[0,T]\times \mathcal{O}.\end{equation*}
Due to the Lipschitz continuity of $g^{-1}$, we have that  $u^i_k$ and $u^i$ belong to $L^2(\Omega;X_1\cap X_2\cap L^2(0,T;H^1))$ and, in view of \eqref{gs66}
\begin{equation}
\label{gs58}
\lim_{k\to\infty} \le(\Vert u^i_{k}-u^i\Vert_{X_1}+\Vert u^i_{k}-u^i\Vert_{X_2}\r)=0,\ \ \ \ \ \hat{\mathbb{P}}-\text{a.s}.\end{equation}
Moreover 
\begin{equation*}
    \nabla u^i_k(s)=b(\rho^i_k(s)) \nabla \rho^i_k(s),\qquad \nabla u^i(s)=b(\rho^i(s)) \nabla \rho^i(s),
\end{equation*}
so that 
\begin{align*}
    &\int_0^t \langle b(\rho^i_k(s)) \nabla \rho^i_k(s),\nabla \psi \rangle_H ds -\int_0^t \langle b(\rho^i(s)) \nabla \rho^i(s),\nabla \psi \rangle_H ds\\[10pt]
    &=\int_0^t \langle  \nabla u^i_k(s),\nabla \psi \rangle_H ds -\int_0^t \langle \nabla u^i(s),\nabla \psi \rangle_H ds\\[10pt]
    &=-\int_0^t \langle  (u^i_k(s)-u^i(s)),\Delta \psi \rangle_H ds.
\end{align*}
In particular, due to \eqref{gs58}, we have that 
\begin{equation}\label{eq:gs68}
    \lim_{k\to \infty} \int_0^t \langle b(\rho^i_k(s)) \nabla \rho^i_k(s),\nabla \psi \rangle_H ds =\int_0^t \langle b(\rho^i(s)) \nabla \rho^i(s),\nabla \psi \rangle_H ds,\ \ \ \ \hat{\mathbb{P}}-\text{a.s.}
\end{equation}
Now, for $i=1, 2$ and $t \in\,[0,T]$, we define
\[\hat{M^i}(t)=\langle \rho^i(t),\psi\rangle_H-\langle g(u_0),\psi\rangle_H +\int_0^t \langle b(\rho^i(s)) \nabla \rho^i_k(s),\nabla \psi \rangle_H ds-\int_0^t \langle F(\rho^i(s)),\psi \rangle_H ds.\]
By proceeding as in the proof of \cite[Lemma 4.9]{DHV2016}, thanks to \eqref{gs66}, \eqref{eq:gs68} and the Lipschitz continuity of $F$,  we have that for every $t \in\,[0,T]$
\[\le<\hat{M^i}-\int_0^\cdot \langle \sigma_g (\rho^i(s))d\hat{w}^Q(s),\psi\rangle_H\r>_t=0,\ \ \ \ \mathbb{P}-\text{a.s},\]
where $\langle\cdot\rangle_t$ is the quadratic variation process. This implies that
both $\rho^1$ and $\rho^2$ satisfy equation \eqref{SPDERho}. Namely, for every $\psi\in\,C^\infty_0(\mathcal{O})$ and $i=1, 2$
\begin{align*}
    \langle \rho^i(t),\psi\rangle_H=&\langle g(u_0),\psi\rangle_H -\int_0^t \langle b(\rho^i(s)) \nabla \rho^i(s),\nabla \psi \rangle_H ds \\[10pt]
    & +\int_0^t \langle F(\rho^i(s)),\psi \rangle_H ds +\int_0^t\langle \si_g(\rho^i(s))d\hat{w}^Q (s),\psi\rangle_H.
\end{align*}

As we have recalled above, thanks to the remark by Gy\"ongy-Krylov  in
\cite{gk} this implies that $\rho_\mu$ converges in probability to some random variable  $\rho $ taking values in $X_1\cap X_2$, as $\mu$ goes to zero. Due to \eqref{gs70}, we also have that $\rho$ belongs to $L^2(\Omega;X_1\cap X_2\cap L^2(0,T;H^1))$ and satisfies equation \eqref{SPDERho}.

Now we set 
\[u=g^{-1}(\rho).\]
 Due to the Lipschitz continuity of $g^{-1}$ we have that $u \in\,L^2(\Omega;X_1\cap X_2\cap L^2(0,T;H^1))$ and $u_\mu$ converges in probability to $u$ in $X_1\cap X_2$, as $\mu$ goes to zero.
In order to conclude, we have to identify $u$ with the solution of equation \eqref{gs40}. We apply the generalized It\^o formula stated in Proposition \ref{ItoFormulaSPDE} to $u:=g^{-1}(\rho)$ with 
\[\mathfrak{U}=H_Q,\ \ \ \ J_i(t)=\si_g(\rho(t))Q e_i,\quad i\in\mathbb{N},\]
and 
\[F(t)=F(\rho(t)),\quad G(t)=b(\rho(t))\nabla \rho(t)).\]
 Actually, since 
\begin{equation*}
    (g^{-1})'(r)=\frac{1}{\gamma(g^{-1}(r))},\ \ \ \ \  (g^{-1})''(r)= -\frac{\gamma '(g^{-1}(r))}{\gamma(g^{-1}(r))^3},\ \ \ \ r \in\,\mathbb{R},
\end{equation*}
for any $\psi\in C^\infty_0(\mathcal{O})$ we can conclude that
\begin{align*}
\langle u(t),\psi\rangle_H &=\langle u_0,\psi\rangle_H -\int_0^t \le\langle \frac{\nabla u(s)}{\gamma(u(s))}, \nabla\psi \r\rangle_H ds-\int_0^t \le\langle \nabla \le(\frac{1}{\gamma(u(s))} \r)\cdot \nabla u(s),\psi \r\rangle_H  ds\\[10pt]
&\quad +\int_0^t \le\langle \frac{f(u(s))}{\gamma(u(s))},\psi \r\rangle_H ds -\int_0^t \le\langle \frac{\gamma '(u(s))}{2\gamma(u(s))^3} \,\sum_{i=1}^\infty (\si(u(s))Qe_i)^2,\psi \r\rangle_H ds\\[10pt] 
&\quad+\int_0^t \le\langle \frac{\si(u(s))}{\gamma(u(s))} \,d w^Q (s),\psi \r\rangle_H,
\end{align*}
which means that $u$ is a solution to \eqref{gs40}. 

In order to prove the uniqueness of the solution of equation \eqref{gs40}, if $u_1$ and $u_2$ are two solutions, we apply
 Proposition \ref{ItoFormulaSPDE} to $\rho_j=g(u_j)$, $j=1, 2$, with 
 \[\mathfrak{U}=H_Q,\ \ \ \ J^j_i(t)=\frac{\si(u_j)}{\gamma(u_j)}\,Q e_i, \ \ \ G^j(t)=\frac{\nabla u_j(t)}{\gamma(u_j(t))}, \quad i\in\mathbb{N},\]
 and 
 \[F^j(t)=\frac{f(u_j(t))}{\gamma(u_j(t))}-\gamma(u_j(t)) \nabla\le(\frac{1}{\gamma(u_j(t))}\r)\cdot \nabla u_j(t) -\frac{\gamma '(u_j(t))}{2\gamma(u_j(t))^3} \,\sum_{i=1}^\infty (\si(u_j(t))Qe_i)^2.\]
Then it turns out that both $g(u_1)$ and $g(u_2)$ are solutions to \eqref{SPDERho}. Thus, by the uniqueness result in Theorem \ref{RhoUniquenessTheorem}, we can conclude that $g(u_1)=g(u_2)$, and this implies that  $u_1=u_2$. 
\end{proof}

\appendix

\section{A generalized It\^o formula}\label{gxappend}

In \cite{DHV2016},  it proved a generalized It\^o formula for the weak solutions of the following general class of equations 
\begin{equation}\label{ItoFormulaSPDE}
    du(t)=F(t)dt+\text{div}\, G(t) dt + J(t)\,dw(t),\ \ \ \ u_0 \in\,H,
\end{equation}
where $H=L^2(\mathbb{T}^d)$, $d\geq 1$. In the present paper we are dealing with Dirichlet boundary conditions in general bounded open sets $\mathcal{O}$. In what follows we adapt the formulation of \cite[Proposition A.1]{DHV2016} to our situation and we briefly describe the  modification we have to do in the proof. 

\begin{Proposition}\label{GeneralIto}
Let $\psi\in C^\infty_0(\mathcal{O})$ and $\varphi\in C^2(\mathbb{R})$, with bounded second-order derivative. Suppose $W$ is a space-time white noise, that is
\[w(t)=\sum_{i=1}^\infty e_i\beta_i(t),\]
 where $(\beta_i)_{i\in\mathbb{N}}$ are mutually independent standard Wiener processes on the stochastic basis $(\Omega,\mathcal{F},(\mathcal{F}_t)_{t\in[0,T]},\mathbb{P})$ and $(e_i)_{i\in\mathbb{N}}$ is a complete orthonormal system in a separable Hilbert space $\mathfrak{U}$. Assume that $F$ and $G_j$ are adapted processes in   $L^2(\Omega;L^2(0,T;H))$, $j=1,\ldots,d$ and $J$ is an adapted process in  $L^2(\Omega;L^\infty(0,T;L_2(\mathfrak{U};H)))$. For every $i \in\,\mathbb{N}$, let $J_i(t):=J(t)e_i$. If the process 
\begin{equation*}
    u\in L^2(\Omega; C([0,T];H^{-1}))\cap L^2(\Omega;L^2(0,T;H^1))
\end{equation*}
solves \eqref{ItoFormulaSPDE} in $H^{-1}$, then almost surely, for all $t\in[0,T]$, 
\begin{equation}\label{ItoLimit}
\begin{aligned}
\langle \varphi(u(t)),\psi\rangle_H &=\langle \varphi(u_0),\psi\rangle_H + \int_0^t \langle \varphi '(u(s))F(s),\psi\rangle_H ds-\int_0^t \langle \varphi ''(u(s))\nabla u(s)\cdot G(s),\psi\rangle_H ds\\[10pt]
&\quad - \int_0^t \langle \varphi '(u(s))G(s),\nabla \psi\rangle_H ds+\frac{1}{2} \int_0^t \langle \varphi ''(u(s))\sum_{i=1}^\infty J_i^2(s),\psi\rangle_H\\[10pt]
&\quad +\int_0^t \langle \varphi '(u(s))J(s)\,dw(s),\psi\rangle_Hds.
\end{aligned}
\end{equation}
Moreover, if we further assume that $\varphi$ has bounded first-order derivative, the assumption on $F$ could be relaxed to $L^1(\Omega;L^1(0,T;L^1(\mathcal{O})))$ and we still have \eqref{ItoLimit} to be true. 
\end{Proposition}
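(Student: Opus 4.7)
The proof will follow the structure of \cite[Proposition A.1]{DHV2016}, adapted from the periodic to the Dirichlet setting. The key observation that makes this adaptation essentially painless is that the test function $\psi \in C^\infty_0(\mathcal{O})$ has compact support strictly inside $\mathcal{O}$, so the boundary $\partial \mathcal{O}$ never intervenes: all convolutions can be performed on an interior neighborhood of $\mathrm{supp}(\psi)$, without any need to extend $u, F, G, J_i$ or worry about boundary values.

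Step 1 (Mollification). Fix $\psi \in C^\infty_0(\mathcal{O})$ and choose $\epsilon_0 > 0$ so small that $\mathcal{O}_0 := \mathrm{supp}(\psi) + \overline{B_{2\epsilon_0}(0)} \subset \mathcal{O}$. For a standard mollifier $(\rho_\epsilon)_{\epsilon > 0}$ on $\mathbb{R}^d$ and $\epsilon < \epsilon_0$, define on $\mathcal{O}_0$ the smooth approximations $u^\epsilon := u * \rho_\epsilon$, $F^\epsilon := F * \rho_\epsilon$, $G^\epsilon := G * \rho_\epsilon$, $J_i^\epsilon := J_i * \rho_\epsilon$. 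These are well-defined on $\mathcal{O}_0$ using only values of the original objects inside $\mathcal{O}$. Convolving the equation in space yields, pointwise in $x \in \mathcal{O}_0$, the scalar SDE
\[
du^\epsilon(t,x) = F^\epsilon(t,x)\,dt + \mathrm{div}\,G^\epsilon(t,x)\,dt + \sum_{i=1}^\infty J_i^\epsilon(t,x)\,d\beta_i(t).
\]
Since $u^\epsilon(\cdot,x)$ is a real-valued continuous semimartingale and $\varphi \in C^2(\mathbb{R})$ with bounded $\varphi''$, the classical one-dimensional Itô formula applies pointwise. Integrating against $\psi$ and performing integration by parts on the $\mathrm{div}\,G^\epsilon$ term (the boundary contribution vanishes because $\psi$ has compact support in $\mathcal{O}_0$) produces identity \eqref{ItoLimit} with $u, F, G, J_i$ replaced by their mollified versions.

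Step 2 (Passage to the limit $\epsilon \to 0$). Because $u \in L^2(\Omega; L^2(0,T;H^1))$, we have $u^\epsilon \to u$ and $\nabla u^\epsilon \to \nabla u$ in $L^2(\Omega \times (0,T) \times \mathcal{O}_0)$; similarly $F^\epsilon \to F$, $G^\epsilon \to G$, and the Hilbert–Schmidt assumption on $J$ gives $\sum_i \|J_i^\epsilon - J_i\|_{L^2}^2 \to 0$. Extracting a subsequence with a.e. pointwise convergence, the continuity and quadratic growth $|\varphi(r)| \leq C(1+r^2)$, together with $|\varphi'(r)| \leq C(1+|r|)$ and $|\varphi''| \leq C$, let us pass to the limit in each term by a combination of dominated convergence and the Vitali convergence theorem. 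Crucially, for the bilinear term $\varphi''(u^\epsilon)\nabla u^\epsilon \cdot G^\epsilon$ the uniform bound by $C(|\nabla u^\epsilon|^2+|G^\epsilon|^2)$, whose $L^1$ norm converges to $C\bigl(\|\nabla u\|_{L^2}^2 + \|G\|_{L^2}^2\bigr)$, provides equi-integrability; for the quadratic variation term $\varphi''(u^\epsilon)\sum_i (J_i^\epsilon)^2$ the bound $\|\varphi''\|_\infty \|J\|_{L_2(\mathfrak{U},H)}^2 \in L^\infty(0,T)$ gives dominated convergence; the stochastic integral is handled via Itô's isometry. The right-hand side of the resulting identity is continuous in $t$, which then extends the equality to every $t \in [0,T]$ almost surely.

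The main obstacle is the convergence of the nonlinear product $\varphi''(u^\epsilon)\nabla u^\epsilon \cdot G^\epsilon$, since it combines three $\epsilon$-dependent factors of differing regularity; this is resolved by the Vitali/equi-integrability argument sketched above, which only needs the bound $|\varphi''| \leq C$ and the $L^2(L^2)$ convergence of $\nabla u^\epsilon$ and $G^\epsilon$.

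Step 3 (Relaxed $F$). For the second part of the statement, where $\varphi'$ is additionally assumed bounded and $F$ only belongs to $L^1(\Omega; L^1(0,T; L^1(\mathcal{O})))$, the only change is in the $F$-term: since $|\varphi'| \leq \|\varphi'\|_\infty$, the convergence $\int_0^t \langle \varphi'(u^\epsilon) F^\epsilon, \psi\rangle_H\,ds \to \int_0^t \langle \varphi'(u) F, \psi\rangle_H\,ds$ follows from $L^1$ convergence $F^\epsilon \to F$ combined with a.e.\ convergence of $\varphi'(u^\epsilon)$, via dominated convergence. All other terms are treated as before, completing the proof.
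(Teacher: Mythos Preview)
Your proposal is correct and follows essentially the same approach as the paper: both exploit the compact support of $\psi$ to mollify only on an interior neighborhood, apply the one-dimensional It\^o formula pointwise to $\varphi(u^\epsilon(t,x))$, integrate against $\psi$, and then pass to the limit. The paper's proof is in fact terser than yours, deferring the details of the limit passage to \cite{DHV2016}, whereas you spell out the Vitali/dominated-convergence arguments explicitly.
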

\begin{proof}
It is enough to prove the result for any smooth $\psi$ with compact support in $\mathcal{O}$. Given a fixed $\psi\in C_0^\infty(\mathcal{O})$, suppose it is supported on the compact set $K\subset \mathcal{O}$ and let $\delta_0:=d(K,\mathcal{O}^c)>0$. We fix a positive smooth function $\xi$   supported on the unit ball with integral equals to $1$, and define   $\xi_\delta(x)=\frac{1}{\delta^d}\xi(\frac{x}{\delta})$. 
Then, if for any $f\in H$ we define $f^\delta=f\ast \xi_\delta$, for $\d<\d_0$, we have
\begin{equation*}
    \Vert f^\delta\Vert_{L^2(K)}\leq \Vert f\Vert_{H},\qquad \Vert f^\delta -f\Vert_{L^2(K)}\rightarrow 0.
\end{equation*}
Now, we apply the mollifiers $\xi_\d$  to $u(t)$ and we have
\begin{equation*}
    u^\delta(t,x)=u_0^\delta(x)+\int_0^t F^\delta(s,x) ds+\int_0^t \text{div}\, G^\delta(s,x) ds +\sum_{i=1}^\infty \int_0^t J_i^\delta(s,x) d\beta_i(s),
\end{equation*}
for all $x\in K$. Thus,  we can apply the It\^o formula to $\varphi(u^\delta(t,x))\psi(x)$ and, after we   integrate in $x$, we get
\begin{equation}\label{ItoApprox}
\begin{aligned}
 \langle\varphi(u^\delta(t)),\psi\rangle_H &= \langle\varphi(u^\delta_0),\psi\rangle_H + \int_0^t \langle \varphi '(u^\delta(s))F^\delta(s),\psi\rangle_H ds+\int_0^t\langle \varphi '(u^\delta(s))\text{div}\, G^\delta(s),\psi\rangle_H ds\\[10pt]
&\quad + \frac{1}{2}\sum_{i=1}^\infty\int_0^t \langle \varphi ''(u^\delta(s))(J_i^\delta(s))^2,\psi\rangle_H ds + \sum_{i=1}^\infty \int_0^t \langle \varphi'(u^\delta(s))J_i^\delta(s),\psi\rangle_H d\beta_i(s)\\[10pt]
&=\langle\varphi(u^\delta_0),\psi\rangle_H + \int_0^t \langle \varphi '(u^\delta(s))F^\delta(s),\psi\rangle_H ds+\int_0^t\langle \text{div}(\varphi '(u^\delta(s)) G^\delta(s)),\psi\rangle_H ds\\[10pt]
&\quad -\int_0^t\langle \varphi ''(u^\delta(s))\nabla u^\delta(s)\cdot G^\delta(s),\psi\rangle_H ds+ \frac{1}{2}\sum_{i=1}^\infty\int_0^t \langle \varphi ''(u^\delta(s))(J_i^\delta(s))^2,\psi\rangle_H ds \\[10pt]
&\quad+ \sum_{i=1}^\infty \int_0^t \langle \varphi'(u^\delta(s))J_i^\delta(s),\psi\rangle_H d\beta_i(s).
\end{aligned}
\end{equation}
At this point, using the same argument as in \cite{DHV2016}, we can take the limit above, as $\d$ goes to zero, and obtain \eqref{ItoLimit}.

\end{proof}

{\bf Acknowledgments.} The first named author would like to thank Viorel Barbu, Zdzis\l aw Brze\'zniak, Martina Hofmanov\`a and Irena Lasiecka for several interesting and useful conversations about this problem.

\end{document}